\numberwithin{equation}{section}
\theoremstyle{plain}
\newtheorem{thm}{Theorem}[section]
\newtheorem{rem}{Remark}[section]
\newtheorem{lem}{Lemma}[section]
\newcommand{\tn}{\theta_{n}}
\newcommand{\tnp}{\theta_{n+1}}
\newcommand{\ta}{\theta_{\alpha}}
\newcommand{\vta}{\vartheta_{\alpha}}
\newcommand{\vtnh}{\widehat{\vartheta}_{n}}
\newcommand{\vtkh}{\widehat{\vartheta}_{k}}
\newcommand{\vtnt}{\widetilde{\vartheta}_{n}}
\newcommand{\vtkt}{\widetilde{\vartheta}_{k}}
\newcommand{\vtnph}{\widehat{\vartheta}_{n+1}}
\newcommand{\vtnpt}{\widetilde{\vartheta}_{n+1}}
\newcommand{\dE}{\mathbb{E}}
\newcommand{\dR}{\mathbb{R}}
\newcommand{\dP}{\mathbb{P}}
\newcommand{\cE}{\mathcal{E}}
\newcommand{\cG}{\mathcal{G}}
\newcommand{\cM}{\mathcal{M}}
\newcommand{\cH}{\mathcal{H}}
\newcommand{\cN}{\mathcal{N}}
\newcommand{\cV}{\mathcal{V}}
\newcommand{\cW}{\mathcal{W}}
\newcommand{\cX}{\mathcal{X}}
\newcommand{\cY}{\mathcal{Y}}
\newcommand{\rI}{\mathrm{I}}
\newcommand{\cF}{\mathcal{F}}
\newcommand{\cR}{\mathcal{R}}
\newcommand{\veps}{\varepsilon}
\newcommand{\ccMc}{\langle \cM \rangle}
\newcommand{\wh}{\widehat}
\newcommand{\ind}{\mbox{1}\kern-.25em \mbox{I}}
\font\calcal=cmsy10 scaled\magstep1
\def\build#1_#2^#3{\mathrel{\mathop{\kern 0pt#1}\limits_{#2}^{#3}}}
\def\liml{\build{\longrightarrow}_{}^{{\mbox{\calcal L}}}}
\def\videbox{\mathbin{\vbox{\hrule\hbox{\vrule height1.4ex \kern.6em\vrule height1.4ex}\hrule}}}
\def\demend{\hfill $\videbox$\\}
\begin{document}
\title[Stochastic algorithms for superquantiles estimation]
{Stochastic approximation algorithms for superquantiles estimation}
\author{Bernard Bercu}
\thanks{The corresponding author is Bernard Bercu, email address: bernard.bercu@math.u-bordeaux.fr}
\address{Institut de Math\'ematiques de Bordeaux, Universit\'e de Bordeaux, 
UMR 5251, 351 Cours de la Lib\'eration, 33405 Talence cedex, France.}
\email{bernard.bercu@math.u-bordeaux.fr}
\author{ Manon Costa}
\address{Institut de Math\'ematiques de Toulouse,  Universit\'e de Toulouse,
UMR 5219, 118 Route de Narbonne, 31062 Toulouse cedex, France.}
\email{manon.costa@math.univ-toulouse.fr}
\author{ S\'ebastien Gadat}
\address{Toulouse School of Economics, Universit\'e de Toulouse, 
UMR 5604 and Institut Universitaire de France}
\email{sebastien.gadat@tse-fr.eu}
%\date{\today}

%%%%%%%%%%%%%%%%%%%%%%%%%%%%%%%%%%%%%%%%%%%%%%%%%%%%%%%%%%%%%%%%%%%%%%%%%%%%%%%%%%%%%%%%%%%%%%%%%%

\begin{abstract}
This paper is devoted to two different two-time-scale stochastic approximation algorithms for superquantile estimation.
We shall investigate the asymptotic behavior of a Robbins-Monro estimator and its convexified version.
Our main contribution is to establish the almost sure convergence, the quadratic strong
law and the law of iterated logarithm for our estimates via a martingale approach. 
A joint asymptotic normality is also provided. Our theoretical analysis is illustrated by numerical experiments on
real datasets.
\end{abstract}

\keywords{Primary : 62L20; Secondary : 60F05; 62P05;
Stochastic approximation; Quantile and Superquantile; Limit theorems}

\maketitle

\vspace{-5ex}
%%%%%%%%%%%%%%%%%%%%%%%%%%%%%%%%%%%%%%%%%%%%%%%%%%%%%%%%%%%%%%%%%%%%%%%%%%%%%%%%%%%%%%%%%%%%%%%%%%

\section{Introduction}
\label{S-I}

%%%%%%%%%%%%%%%%%%%%%%%%%%%%%%%%%%%%%%%%%%%%%%%%%%%%%%%%%%%%%%%%%%%%%%%%%%%%%%%%%%%%%%%%%%%%%%%%%%

Estimating quantiles has a longstanding history in statistics and probability. Except in parametric models where explicit formula are available, 
the estimation of quantiles is a real issue. The most commun way to estimate quantiles is to make use of order statistics, see
among other references \cite{Bahadur1966,Ghosh1971}.
Another strategy is to make use of stochastic approximation algorithms and the pioneering work in this vein is the celebrated paper
by Robbins and Monro \cite{RobbinsMonro1951}. 

\vspace{1ex}
\noindent
Let $X$ be an integrable continuous random variable with strictly increasing cumulative distribution function $F$ and probability density function $f$.
For any $\alpha \in ]0,1[$, the quantile $\ta$ of order $\alpha$ of $F$ is given by
\begin{equation}
\label{DEFQ}
F(\ta)= \dP(X \leq \ta)=\alpha,
\end{equation}
whereas the superquantile $\vta$ of order $\alpha$ is defined by
\begin{equation}
\label{DEFSQ}
\vta = \dE[ X \, \vert \, X \geq \ta] =\frac{\dE[X \rI_{\{X \geq \ta\}}]}{\dP(X \geq \ta)} = \frac{\dE[X \rI_{\{X \geq \ta\}}]}{1-\alpha}.
\end{equation}
One can observe that the superquantile provides more information on the tail of the distribution of the random variable $X$. 
Our goal in this paper is to simultaneously estimate quantiles and superquantiles, 
also respectively known as values at risk and conditional values at risk, which have become increasingly popular as measures of risk in finance \cite{Rockafellar2000,Rockafellar2002}.

\vspace{1ex}
\noindent
The paper is organized as follows. Section \ref{S-O} is devoted to a brief overview of the previous literature on the recursive estimation of
quantiles and superquantiles. The main results of the paper are given in Section \ref{S-MR}. 
We propose the almost sure convergence of two-time-scale stochastic approximation algorithms for superquantile estimation. 
The quadratic strong law (QSL) as well as the law of iterated logarithm (LIL) of our stochastic algorithms are also provided.
Moreover, we establish the joint asymptotic normality of our estimates. Numerical experiments on real data are given in Section \ref{S-NE}. 
All technical proofs are postponed to Appendices A and B.

%%%%%%%%%%%%%%%%%%%%%%%%%%%%%%%%%%%%%%%%%%%%%%%%%%%%%%%%%%%%%%%%%%%%%%%%%%%%%%%%%%%%%%%%%%%%%%%%%%

\section{Overview of existing literature}
\label{S-O}

%%%%%%%%%%%%%%%%%%%%%%%%%%%%%%%%%%%%%%%%%%%%%%%%%%%%%%%%%%%%%%%%%%%%%%%%%%%%%%%%%%%%%%%%%%%%%%%%%%

A wide range of literature exists already on the recursive estimation of quantiles \cite{RobbinsMonro1951}. However, to the best of our knowledge,
only a single paper is available on the recursive estimation of superquantiles \cite{Bardou2009}.
In many practical situations where the data are recorded online with relatively high speed, or when the data
are simply too numerous to be handled in batch systems, it is more suitable to implement a recursive strategy where 
quantiles and superquantiles are sequentially estimated with the help of stochastic approximation algorithms \cite{Duflo1997},
\cite{KushnerYin2003}. We also refer the reader to \cite{CCG2017, CCZ2013, Godichon2015, Godichon2019} for the online estimation of geometric medians
and variances.

\vspace{1ex}
\noindent
Bardou et al. \cite{Bardou2009} have previously studied the averaged version \cite{PolyakJuditsky1992, Ruppert1988}
of a one-time-scale stochastic algorithm in order to estimate $\ta$ and $\vta$. Here, we have chosen to investigate a two-time-scale stochastic 
algorithm \cite{Borkar1997, GPS2018, Konda2004, MokkademPelletier2006} which performs pretty well and offers more flexibility than the one-time-scale algorithm.
Let $(X_n)$ be a sequence of independent and identically distributed random variables sharing the same distribution as $X$.
We shall extend the statistical analysis of \cite{Bardou2009} by studying the two-time-scale stochastic 
algorithm given, for all $n \geq 1$, by
\begin{equation}
\label{TTSALGO1}
\left \{
\begin{aligned}
&\tnp =\tn-a_{n} \Bigl( \rI_{\{X_{n+1} \leq \tn \}} - \alpha \Bigr), \\
&\vtnph=\vtnh+b_{n} \Bigl( \frac{X_{n+1}}{1-\alpha}\rI_{\{X_{n+1} >\tn\}} - \vtnh \Bigr),
\end{aligned}
\right.
\end{equation} 
where the initial values $\theta_1$ and $\wh{\vartheta}_1$ are square integrable random variables
which can be arbitrarily chosen and the steps $(a_n)$ and $(b_n)$ are two positive sequences of 
real numbers strictly smaller than one, decreasing towards zero such that
\begin{equation}
\label{CONDSTEP}
\sum_{n=1}^\infty a_n=+\infty, \hspace{0.4cm}
\sum_{n=1}^\infty b_n=+\infty
\hspace{0.8cm}\text{and}\hspace{0.8cm}
\sum_{n=1}^\infty a_n^2<+\infty, \hspace{0.4cm}
\sum_{n=1}^\infty b_n^2<+\infty.
\end{equation}
We shall also investigate the asymptotic behavior of the convexified version of  algorithm \eqref{TTSALGO1},
based on the Rockafellar-Uryasev's identity \cite{Rockafellar2000} and
given, for all $n \geq 1$, by
\begin{equation}
\label{TTSALGO2}
\left \{
\begin{aligned}
&\tnp =\tn-a_n \Bigl( \rI_{\{X_{n+1} \leq \tn\}}-\alpha\Bigr)\\
&\vtnpt=\vtnt+b_n \Bigl(\tn+ \frac{(X_{n+1}-\tn)}{1-\alpha}\rI_{\{X_{n+1} >\tn\}} - \vtnt\Bigr),
\end{aligned}
\right.\end{equation}
where as before the initial values $\theta_1$ and $\widetilde{\vartheta}_1$ are square integrable random variables
which can be arbitrarily chosen. We also refer the reader to the original contribution \cite{Ben-Tal} 
where this convexification first appeared.
The almost sure convergence 
\begin{equation}
\label{ASCVGRM}
\lim_{n \rightarrow \infty}\tn=\ta \hspace{1cm}\text{a.s.}
\end{equation}
is a famous result that was established by Robbins and Monro \cite{RobbinsMonro1951},
Robbins and Siegmund \cite{RobbinsSiegmund1971}. Moreover, the
asymptotic normality is due to Sacks, see Theorem 1 in \cite{Sacks1958}. It requires the
additional assumption that the probability density function $f$ is differentiable with bounded derivative in
every neighborhood of the quantile $\ta$. 
More precisely, if the step $a_n=a_1/n$ where $a_1>0$ and $2 a_1 f(\ta)>1$, we have the asymptotic normality
\begin{equation}
\label{ANRM}
\sqrt{n}\bigl(\tn - \ta \bigr) \liml \cN\Bigl(0, \frac{a_1^2\alpha(1-\alpha)}{2a_1f(\ta) -1} \Bigr).
\end{equation}
One can observe that in the special case where the value $f(\ta)>0$ is known, 
it is possible to minimise the previous limiting variance by choosing $a_1=1/f(\ta)$ and to
obtain from \eqref{ANRM} the asymptotic efficiency
$$
\sqrt{n}\bigl(\tn - \ta \bigr) \liml \cN\Bigl(0, \frac{\alpha(1-\alpha)}{f^2(\ta)} \Bigr).
$$
Some useful refinements on the asymptotic behavior of the sequence $(\tn)$ are also well-known.
The LIL was first proved by Gaposhkin and Krasulina, see Theorem 1 in \cite{Gaposkin1975} 
and Corollary 1 in \cite{Kersting1977}. More precisely, if the step $a_n=a_1/n$ where $2a_1 f(\ta)>1$, 
we have the LIL
\begin{eqnarray}  
\limsup_{n \rightarrow \infty} \left(\frac{n}{2 \log \log n} \right)^{1/2}
 \!\!\!\bigl( \tn - \ta \bigr)
&=& - \liminf_{n \rightarrow \infty}
\left(\frac{n}{2 \log\log n}\right)^{1/2} 
\!\!\!\bigl( \tn - \ta \bigr) \notag \\
&=& \left(  \frac{a_1^2\alpha(1-\alpha)}{2a_1f(\ta) -1}  \right)^{1/2}
\hspace{1cm}\text{a.s.}
\label{LILRM}
\end{eqnarray}
In particular, it follows from \eqref{LILRM} that
\begin{equation}
\label{LILSUPRM}  
\limsup_{n \rightarrow \infty} \left(\frac{n}{2 \log \log n} \right)
\bigl( \tn - \ta \bigr)^2= \frac{a_1^2\alpha(1-\alpha)}{2a_1f(\ta) -1}
\hspace{1cm}\text{a.s.}
\end{equation} 
which is the limiting variance in \eqref{ANRM}.
The QSL is due to Lai and Robbins, see Lemma 1 and Theorem 2 in \cite{LaiRobbins1979}
as well as Theorem 3 in \cite{Pelletier1998}. More precisely, they proved that
\begin{equation}
\label{QSLRM}
\lim_{n \rightarrow \infty} \frac{1}{\log n} \sum_{k=1}^n \bigl( \theta_k - \ta \bigr)^2= \frac{a_1^2\alpha(1-\alpha)}{2a_1f(\ta) -1}
\hspace{1cm}\text{a.s.}
\end{equation}
Besides the classical choice $a_n=a_1/n$ where $a_1>0$, slower step-size $a_n=a_1/n^a$ where $a_1>0$ and $1/2<a<1$ have been studied in depth.
We refer the reader to pioneer work of Chung \cite{Chung1954} and to Fabian \cite{Fabian1968} who obtained that the asymptotic normality 
still holds for the Robbins-Monro algorithm. More precisely, if $f(\ta)>0$, they showed that
\begin{equation}
\label{ANRMStepa}
\sqrt{n^a}\bigl(\tn - \ta \bigr) \liml \cN\Bigl(0, \frac{a_1\alpha(1-\alpha)}{2f(\ta)} \Bigr).
\end{equation}
In addition, it follows from Lai and Robbins \cite{LaiRobbins1979} or Pelletier \cite{Pelletier1998} that
\begin{eqnarray}  
\limsup_{n \rightarrow \infty} \left(\frac{n^a}{2(1-a) \log n} \right)^{1/2}
 \!\!\!\bigl( \tn - \ta \bigr)
&=& - \liminf_{n \rightarrow \infty}
\left(\frac{n^a}{2(1-a) \log n}\right)^{1/2} 
\!\!\!\bigl( \tn - \ta \bigr) \notag \\
&=& \left(  \frac{a_1\alpha(1-\alpha)}{2f(\ta)}  \right)^{1/2}
\hspace{1cm}\text{a.s.}
\label{LILRMStepa}
\end{eqnarray}
In particular, 
\begin{equation}
\label{LILSUPRMStepa}  
\limsup_{n \rightarrow \infty} \left(\frac{n^a}{2(1-a) \log n} \right)
\bigl( \tn - \ta \bigr)^2= \frac{a_1\alpha(1-\alpha)}{2f(\ta)}
\hspace{1cm}\text{a.s.}
\end{equation} 
Moreover, we also have from \cite{LaiRobbins1979}, \cite{Pelletier1998} that
\begin{equation}
\label{QSLRMStepa}
\lim_{n \rightarrow \infty} \frac{1}{n^{1-a}} \sum_{k=1}^n \bigl( \theta_k - \ta \bigr)^2= \frac{a_1\alpha(1-\alpha)}{2(1-a)f(\ta)}
\hspace{1cm}\text{a.s.}
\end{equation}
The restrictive assumption $2a_1 f(\ta)>1$, which involves the knowledge of $f(\ta)$, is no longer needed. However, the convergence rate
$n^a$ is always slower than $n$, which means that the choice $a_n=a_1/n$ theoretically outperforms the one of $a_n=a_1/n^a$, 
at least asymptotically.

\vspace{1ex}
\noindent
In the special case of the one-time-scale stochastic algorithm where $a_n=b_n$, Bardou et al. \cite{Bardou2009} 
proved the almost sure convergences
\begin{equation}
\label{ASCVGSQB1}
\lim_{n \rightarrow} \tn= \ta \hspace{1.5cm}\text{and}\hspace{1.5cm}
\lim_{n \rightarrow} \vtnt=\vta
 \hspace{1cm}\text{a.s.}
\end{equation}
using an extended version of Robbins-Monro theorem together with Cesaro and Kronecker lemmas, see e.g. Theorem 1.4.26 in \cite{Duflo1997}.
They also state without proof that
\begin{equation}
\label{ASCVGSQB2}
\lim_{n \rightarrow} \vtnh=\vta
 \hspace{1cm}\text{a.s.}
\end{equation}
Yet, other almost sure asymptotic properties for the sequences $(\vtnh)$ and $(\vtnt)$, such as the LIL and the 
QSL, are still missing. Bardou et al. also established in Theorem 2.4 of \cite{Bardou2009} the joint asymptotic normality
of the averaged version \cite{PolyakJuditsky1992, Ruppert1988} of their one-time-scale stochastic algorithm
\begin{equation}
\label{ANRMB}
\sqrt{n}
\begin{pmatrix}
\overline{\theta}_n - \ta \\
\overline{\vartheta}_n - \ta \\
\end{pmatrix} \liml \cN\bigl(0, \Sigma \bigr)
\end{equation}
where the asymptotic covariance matrix $\Sigma$ is explicitly calculated, 
$$
\overline{\theta}_n = \frac{1}{n}\sum_{k=1}^n \theta_k \hspace{1.5cm}\text{and}\hspace{1.5cm}
\overline{\vartheta}_n   = \frac{1}{n}\sum_{k=1}^n \widetilde{\vartheta}_{k}.
$$
We will show that our two-time-scale stochastic algorithms given by \eqref{TTSALGO1} and \eqref{TTSALGO2} allow us to avoid the 
Ruppert and Polyak-Juditsky averaging principle. Moreover, they perform pretty well both from a theoretical and a practical point of view
and offer more flexibility than the one-time-scale stochastic algorithm.

%%%%%%%%%%%%%%%%%%%%%%%%%%%%%%%%%%%%%%%%%%%%%%%%%%%%%%%%%%%%%%%%%%%%%%%%%%%%%%%%%%%%%%%%%%%%%%%%%%

\vspace{-1ex}
\section{Main results}
\label{S-MR}

%%%%%%%%%%%%%%%%%%%%%%%%%%%%%%%%%%%%%%%%%%%%%%%%%%%%%%%%%%%%%%%%%%%%%%%%%%%%%%%%%%%%%%%%%%%%%%%%%%

In order to state our main results, it is necessary to introduce some assumptions.
\begin{displaymath}
\begin{array}{ll}
(\mathcal{A}_1) & \textrm{The probability density function $f$ is differentiable with bounded derivative in}\\ 
  & \textrm{every neighborhood of $\ta$.}
\end{array}
\end{displaymath}
\vspace{-1ex}
\begin{displaymath}
\begin{array}{ll}
(\mathcal{A}_2) & \textrm{The function $\Phi$ defined, for all $\theta \in \dR$, by  $\Phi(\theta)=f(\theta)+ \theta f^{\prime}(\theta)$ is bounded in}\\ 
  & \textrm{every neighborhood of $\ta$.}
\end{array}
\end{displaymath}

Our first result concerns the basic almost sure convergence of the two-time-scale stochastic algorithms \eqref{TTSALGO1} and \eqref{TTSALGO2} 
to the superquantile $\vta$.

\begin{thm}
\label{T-ASCVGSQ} Assume that $(\mathcal{A}_1)$ holds and that the random variable $X$ is square integrable.
Then, we have the almost sure convergences
\begin{equation}
\label{ASCVGSQ1}
\lim_{n \rightarrow \infty}\vtnh=\vta \hspace{1cm}\text{a.s.}
\end{equation}
\begin{equation}
\label{ASCVGSQ2}
\lim_{n \rightarrow \infty}\vtnt=\vta \hspace{1cm}\text{a.s.}
\end{equation}
\end{thm}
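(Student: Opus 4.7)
The plan is to exploit the cascade structure of both algorithms: the first component $\tn$ in \eqref{TTSALGO1} and \eqref{TTSALGO2} is exactly the classical Robbins--Monro iteration for the quantile and does not depend on $\vtnh$ or $\vtnt$, so \eqref{ASCVGRM} immediately gives $\tn \to \ta$ a.s. I then treat the second component as a one-dimensional stochastic approximation with a time-varying drift that vanishes asymptotically.

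Both second equations fit the common template $Z_{n+1} = (1-b_n) Z_n + b_n W_{n+1}$, where $\dE[W_{n+1} \mid \mathcal{F}_n] = h(\tn)$ for a continuous function $h$ satisfying $h(\ta) = \vta$. For \eqref{TTSALGO1} one takes $h(\theta) = \dE[X \rI_{\{X \geq \theta\}}]/(1-\alpha)$, while for \eqref{TTSALGO2} the Rockafellar--Uryasev identity yields $h(\theta) = \theta + \dE[(X-\theta)_+]/(1-\alpha)$. Continuity of $h$ at $\ta$ follows from dominated convergence (using the absence of atoms of $X$ and integrability), and thus $r_n := h(\tn) - \vta \to 0$ a.s. The residual $\varepsilon_{n+1} := W_{n+1} - h(\tn)$ is an $\mathcal{F}_{n+1}$-martingale difference whose conditional variance $\sigma_n^2$ is almost surely bounded in $n$ because $X \in L^2$ and $\tn$ converges.

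Setting $\delta_n := Z_n - \vta$, the recursion becomes $\delta_{n+1} = (1-b_n) \delta_n + b_n r_n + b_n \varepsilon_{n+1}$. By linearity I split $\delta_n = A_n + B_n$, where $A_{n+1} = (1-b_n) A_n + b_n \varepsilon_{n+1}$ with $A_1 = 0$, and $B_{n+1} = (1-b_n) B_n + b_n r_n$ with $B_1 = \delta_1$. Squaring the first equation gives $\dE[A_{n+1}^2 \mid \mathcal{F}_n] \leq (1-b_n) A_n^2 + b_n^2 \sigma_n^2$; the Robbins--Siegmund lemma applied with $\xi_n = b_n^2 \sigma_n^2$ (a.s.\ summable thanks to \eqref{CONDSTEP}) and $\zeta_n = b_n A_n^2$ yields $A_n^2 \to L_\infty$ a.s.\ together with $\sum_n b_n A_n^2 < \infty$ a.s.\ Since $\sum_n b_n = +\infty$, necessarily $L_\infty = 0$, so $A_n \to 0$ a.s. For the drift-driven component, iteration gives $B_{n+1} = \Pi_n \delta_1 + \sum_{k=1}^n b_k \Pi_n^k r_k$ with $\Pi_n^k := \prod_{j=k+1}^n (1-b_j)$ and $\Pi_n := \Pi_n^0$. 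Because $\sum b_j = +\infty$ forces $\Pi_n \to 0$, the initial-condition term disappears; the identity $\sum_{k=1}^n b_k \Pi_n^k = 1 - \Pi_n \to 1$ (a one-line induction), combined with $b_k \Pi_n^k \to 0$ for each fixed $k$, triggers Toeplitz's lemma and gives $\sum_k b_k \Pi_n^k r_k \to 0$ a.s.\ since $r_k \to 0$ a.s.

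Summing the two contributions yields $\delta_n \to 0$ a.s., which is precisely \eqref{ASCVGSQ1} and \eqref{ASCVGSQ2}. I expect the most delicate step to be the Toeplitz-type argument for $B_n$, where one has to verify simultaneously that the weights $b_k \Pi_n^k$ approximately partition unity and vanish pointwise in $k$; the remaining computations — bounding $\sigma_n^2$, checking square-summability, and invoking Robbins--Siegmund — are routine under \eqref{CONDSTEP} and the assumption that $X \in L^2$.
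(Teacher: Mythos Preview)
Your proposal is correct and follows the same route as the paper: decompose the centered iterate into a martingale contribution plus a drift term, kill the drift with Toeplitz's lemma (using $h(\theta_n)\to h(\theta_\alpha)=\vartheta_\alpha$ and the telescoping identity $\sum_{k=1}^n b_k\Pi_n^k=1-\Pi_n$, which the paper writes equivalently as $\sum_{k=1}^n b_kP_k=P_n-1$), and show that the martingale part vanishes. The only cosmetic difference is in that last step: the paper observes that $\sum_k b_k\varepsilon_{k+1}$ has a.s.\ finite predictable quadratic variation, hence converges, and then applies Kronecker's lemma to get $P_n^{-1}\sum_{k\le n} b_kP_k\varepsilon_{k+1}\to 0$, whereas you reach the same conclusion by applying Robbins--Siegmund to $A_n^2$; both arguments are standard and equally short.
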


\noindent
Our proof is slightly different from that of Bardou et al. \cite{Bardou2009} established for the one-time-scale stochastic algorithm 
where $a_n=b_n$. It can be found in Appendix A for sake of completeness.
We now focus our attention on the almost sure rates of convergence of the sequences $(\vtnh)$ and $(\vtnt)$.
We divide our analysis into two parts depending on the step size $(b_n)$ in the superquantile recursive procedure. First of all, we 
shall consider the optimal step  $b_n=b_1/n$. Then, we shall study the case where $b_n=b_1/n^{b}$ with $1/2<b<1$. For all
$\theta \in \dR$, denote
\begin{equation}
\label{DEFVAR}
\sigma_\alpha^2(\theta)=\frac{1}{(1-\alpha)^2} \text{Var}(X \rI_{\{X >\theta\}}) \hspace{0.5cm}\text{and}\hspace{0.5cm}
\tau_\alpha^2(\theta)=\frac{1}{(1-\alpha)^2} \text{Var}((X-\theta) \rI_{\{X >\theta\}}).
\end{equation}
It follows from straightforward calculation that
\begin{equation*}
\label{EQVAR}  
\tau_\alpha^2(\ta) =\sigma_\alpha^2(\ta) - \Bigl(\frac{\alpha\ta}{1-\alpha}\Bigr)(2\vta-\ta).
\end{equation*} 
Consequently, as soon as $\ta \geq 0$, we always have $\tau_\alpha^2(\ta) \leq \sigma_\alpha^2(\ta)$
since $\vta \geq \ta$. 

\begin{thm}
\label{T-LILQSEQUAL1}
Assume that $(\mathcal{A}_1)$ and $(\mathcal{A}_2)$ hold and that the random variable $X$ has a moment of order $>2$. Moreover,
suppose that $f(\ta)>0$ and that the step sequences $(a_n)$ and $(b_n)$ are given by
\begin{equation*}
a_n=\frac{a_1}{n^a}
\hspace{1.5cm}\text{and}\hspace{1.5cm}
b_n=\frac{b_1}{n}
\end{equation*}
where $a_1>0$,  $b_1> 1/2$ and $1/2<a<1$. Then, $(\vtnh)$ and $(\vtnt)$
share the same QSL
\begin{equation}
\label{QSL1}
\lim_{n \rightarrow \infty} \frac{1}{\log n} \sum_{k=1}^n \bigl(  \vtkh - \vta \bigr)^2= \Bigl(\frac{b_1^2}{2b_1-1} \Bigr)\tau^2_{\alpha}(\theta_\alpha)
\hspace{1cm}\text{a.s.}
\end{equation}
%\begin{equation}
%\label{QSL2}
%\lim_{n \rightarrow \infty} \frac{1}{\log n} \sum_{k=1}^n \bigl(  \vtkt - \vta \bigr)^2= \Bigl(\frac{b_1^2}{2b_1-1} \Bigr)\tau^2_{\alpha}(\theta_\alpha)
%\hspace{1cm}\text{a.s.}
%\end{equation}
In addition, they also share the same LIL
\begin{eqnarray}  
\limsup_{n \rightarrow \infty} \left(\frac{n}{2 \log \log n} \right)^{1/2}
 \!\!\!\bigl( \vtnh - \vta \bigr)
&=& - \liminf_{n \rightarrow \infty}
\left(\frac{n}{2 \log\log n}\right)^{1/2} 
\!\!\!\bigl( \vtnh - \vta \bigr) \notag \\
&=& \left(  \frac{b_1^2}{2b_1 -1}  \right)^{1/2} \tau_{\alpha}(\theta_\alpha)
\hspace{1cm}\text{a.s.}
\label{LIL1}
\end{eqnarray}
%\begin{eqnarray}  
%\limsup_{n \rightarrow \infty} \left(\frac{n}{2 \log \log n} \right)^{1/2}
% \!\!\!\bigl( \vtnt - \vta \bigr)
%&=& - \liminf_{n \rightarrow \infty}
%\left(\frac{n}{2 \log\log n}\right)^{1/2} 
%\!\!\!\bigl( \vtnt - \vta \bigr) \notag \\
%&=& \left(  \frac{b_1^2}{2b_1 -1}  \right)^{1/2} \tau_{\alpha}(\theta_\alpha)
%\hspace{1cm}\text{a.s.}
%\label{LIL2}
%\end{eqnarray}
In particular,
\begin{equation*}
\label{LILSUPRM1}  
\limsup_{n \rightarrow \infty} \left(\frac{n}{2 \log \log n} \right)
\bigl( \vtnh - \vta \bigr)^2= \Bigl(\frac{b_1^2}{2b_1 -1}\Bigr) \tau^2_{\alpha}(\theta_\alpha)
\hspace{1cm}\text{a.s.}
\end{equation*} 
%\begin{equation}
%\label{LILSUPRM2}  
%\limsup_{n \rightarrow \infty} \left(\frac{n}{2 \log \log n} \right)
%\bigl( \vtnt - \vta \bigr)^2= \Bigl(\frac{b_1^2}{2b_1 -1} \Bigr)\tau^2_{\alpha}(\theta_\alpha)
%\hspace{1cm}\text{a.s.}
%\end{equation} 
\end{thm}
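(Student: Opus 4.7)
\emph{Plan.} The strategy is to show that the centered error $u_n$ (which stands for either $\vtnh - \vta$ or $\vtnt - \vta$) is asymptotically equivalent to a single martingale whose quadratic variation converges, and then to invoke the classical LIL and QSL for martingales (as in Pelletier \cite{Pelletier1998}). Write the recursion generically as
\[
u_{n+1} = (1-b_n)\, u_n + b_n \bigl( G(\tn) - \vta \bigr) + b_n Z_{n+1},
\]
where $Z_{n+1}$ is a martingale increment with respect to $(\cF_n)$ and $G$ denotes the conditional drift: $G(\theta) = \dE[X\rI_{\{X>\theta\}}]/(1-\alpha)$ for \eqref{TTSALGO1} and $G(\theta) = \theta + \dE[(X-\theta)\rI_{\{X>\theta\}}]/(1-\alpha)$ for \eqref{TTSALGO2}. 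Unrolling yields $u_n = \pi_n^0 u_1 + M_n + B_n$ with $\pi_n^k = \prod_{j=k+1}^{n-1}(1-b_j) \sim (k/n)^{b_1}$, martingale part $M_n = \sum_{k<n}\pi_n^k b_k Z_{k+1}$, and bias part $B_n = \sum_{k<n}\pi_n^k b_k (G(\theta_k)-\vta)$.

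\emph{Convexified version \eqref{TTSALGO2}.} The Rockafellar-Uryasev identity yields $G'(\ta) = 1-(1-F(\ta))/(1-\alpha)=0$, so under $(\cA_1)$ a second-order Taylor expansion gives $G(\theta_k)-\vta = O((\theta_k-\ta)^2)$. Inserting the LIL \eqref{LILRMStepa} for $\tn$ leads to $|B_n| = O(n^{-a}\log n)$ a.s., which is $o(\sqrt{\log\log n / n})$ since $a>1/2$. Hence $u_n$ and $M_n$ are asymptotically equivalent. A direct computation shows $\dE[Z_{k+1}^2|\cF_k]\to \tau_\alpha^2(\ta)$, together with $\sum_{k<n}(\pi_n^k b_k)^2 \sim b_1^2/((2b_1-1)n)$, so $\langle M\rangle_n \sim b_1^2 \tau_\alpha^2(\ta)/((2b_1-1)n)$. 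The martingale LIL and QSL then yield \eqref{QSL1} and \eqref{LIL1} for $(\vtnt)$.

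\emph{Robbins-Monro version \eqref{TTSALGO1}.} Here $h'(\ta) = -\ta f(\ta)/(1-\alpha)$ is generically nonzero, so $G(\theta_k)-\vta$ now contains a linear Taylor term $h'(\ta)\delta_k$ with $\delta_k = \theta_k - \ta$. By \eqref{LILRMStepa} this linear part is of order $n^{-a/2}\sqrt{\log\log n}$, which is \emph{larger} than the target precision $n^{-1/2}$, so it cannot be discarded. The key idea is to reabsorb it into the martingale by means of the $\theta$-recursion. Using
\[
\delta_k = \frac{-(\delta_{k+1}-\delta_k) + a_k\, \eta_{k+1}}{a_k f(\ta)} + O(\delta_k^2),\qquad \eta_{k+1} := \rI_{\{X_{k+1}>\theta_k\}}-(1-F(\theta_k)),
\]
and performing Abel summation in $\sum_{k<n}\pi_n^k b_k \delta_k$, the telescopic piece is asymptotically negligible---its boundary terms vanish since $b_n/a_n \to 0$ because $a<1$---and one is left with an additional martingale term $(h'(\ta)/f(\ta))\sum_{k<n}\pi_n^k b_k \eta_{k+1} = -\ta(1-\alpha)^{-1}\sum_{k<n}\pi_n^k b_k \eta_{k+1}$. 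Therefore $u_n$ is asymptotically equivalent to a martingale driven by the effective increments $\xi_{k+1} = Z_{k+1} - \ta\, \eta_{k+1}/(1-\alpha)$, whose asymptotic conditional variance is
\[
\dE[\xi_{k+1}^2 \mid \cF_k] \longrightarrow \sigma_\alpha^2(\ta) - \frac{\alpha \ta (2\vta-\ta)}{1-\alpha} = \tau_\alpha^2(\ta),
\]
by the very identity stated just above the theorem. The quadratic variation of this effective martingale thus behaves exactly as in the convexified case, and the same LIL and QSL for martingales deliver \eqref{QSL1} and \eqref{LIL1} for $(\vtnh)$.

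\emph{Main obstacle.} The reabsorption step for \eqref{TTSALGO1} is the heart of the argument. Because the slow step $a_n = a_1/n^a$ forces $\delta_n$ to fluctuate at the rate $n^{-a/2}$---\emph{slower} than the rate at which $\vtnh$ concentrates---the linear perturbation $h'(\ta)\delta_n$ is non-negligible and a naive bound of $B_n$ would deliver $\sigma_\alpha^2(\ta)$ in place of the correct $\tau_\alpha^2(\ta)$. The delicate control of the Abel-summation remainders, combined with the higher-order $O(\delta_n^2)$ term, is precisely what turns $\sigma_\alpha^2(\ta)$ into $\tau_\alpha^2(\ta)$ and explains why the two algorithms share the same LIL and QSL constants.
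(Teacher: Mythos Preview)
Your proposal is correct and rests on the same key insight as the paper: the linear Taylor term $H_\alpha'(\ta)(\theta_k-\ta)$, which is too large to be thrown away, must be converted into an additional martingale increment via the $\theta$-recursion, and this is precisely what replaces $\sigma_\alpha^2(\ta)$ by $\tau_\alpha^2(\ta)$. The technical packaging, however, differs. You keep $u_n=\vtnh-\vta$ and perform an Abel summation on $\sum_k \pi_n^k b_k(\theta_k-\ta)$ after substituting the $\theta$-update, whereas the paper introduces the weighted difference $\Delta_n=(\vtnh-\vta)-\delta_n(\tn-\ta)$ with an explicit sequence $\delta_n=C_\alpha b_{n-1}/(a_{n-1}f(\ta)-b_{n-1})$ chosen so that $\Delta_n$ itself satisfies a clean recursion $\Delta_{n+1}=(1-b_n)\Delta_n+b_n(W_{n+1}+\text{remainders})$ with $W_{n+1}=\veps_{n+1}+(a_n\delta_{n+1}/b_n)V_{n+1}$. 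The paper's route yields exact algebraic control of every remainder (the sequences $\nu_n$, $\cH_n$, $\cR_n$) and makes the passage from $\Delta_n$ back to $\vtnh-\vta$ a one-line consequence of $\delta_n(\tn-\ta)\to 0$ at the right rate; your Abel-summation route is more direct but hides the same estimates inside the sum $\sum_k(c_k-c_{k-1})(\theta_k-\ta)$, which you should mention explicitly rather than only the boundary terms. Both approaches produce the same effective martingale increment (the paper's $a_n\delta_{n+1}/b_n$ converges to your $\ta/(1-\alpha)$), so the QSL and LIL follow from the same martingale limit theorems. For the convexified estimator the two arguments are essentially identical.
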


%\begin{rem}
%It follows from straightforward calculation that
%\begin{equation}
%\label{EQVAR}  
%\tau_\alpha^2(\ta) =\sigma_\alpha^2(\ta) - \Bigl(\frac{\alpha\ta}{1-\alpha}\Bigr)(2\vta-\ta).
%\end{equation} 
%Moreover, one can observe that we always have $\vta \geq \ta$. Consequently, as soon as $\ta \geq 0$, we deduce from
%\eqref{EQVAR} that $\tau_\alpha^2(\ta) \leq \sigma_\alpha^2(\ta)$. It means, from an asymptotic point of view, that if
%$\ta \geq 0$, it is preferable to use of the convexified estimator $\vtnt$.
%\end{rem}

\begin{rem}
In the special case where the step sequence $(b_n)$ is given by
\begin{equation*}
b_n=\frac{1}{n+1},
\end{equation*}
it is easy to see that $\vtnh$ and $\vtnt$ both reduce to
$$
\vtnh=\frac{1}{n} \sum_{k=1}^n \Bigl( \frac{X_{k}}{1-\alpha}\Bigr)\rI_{\{X_{k} >\theta_{k-1}\}}
$$
and
$$
\vtnt=\frac{1}{n} \sum_{k=1}^n \theta_{k-1} +\frac{1}{n} \sum_{k=1}^n \Bigl( \frac{X_{k} -\theta_{k-1} }{1-\alpha}\Bigr)\rI_{\{X_{k} >\theta_{k-1}\}}.
$$
In this setting, we immediately obtain from Theorem \ref{T-LILQSEQUAL1} that
\begin{equation*}
\label{BASICQSL}
\lim_{n \rightarrow \infty} \frac{1}{\log n} \sum_{k=1}^n \bigl(  \vtkh - \vta \bigr)^2= \tau^2_{\alpha}(\theta_\alpha)
\hspace{1cm}\text{a.s.}
\end{equation*}
and
\begin{equation*}
\label{BASICLILSUP}  
\limsup_{n \rightarrow \infty} \left(\frac{n}{2 \log \log n} \right)
\bigl( \vtnh - \vta \bigr)^2=  \tau^2_{\alpha}(\theta_\alpha)
\hspace{1cm}\text{a.s.}
\end{equation*} 
\end{rem}

\begin{thm}
\label{T-LILQSLESS1}
Assume that $(\mathcal{A}_1)$ and $(\mathcal{A}_2)$ hold and that the random variable $X$ has a moment of order $>2$. Moreover,
suppose that $f(\ta)>0$ and that the step sequences $(a_n)$ and $(b_n)$ are given by
\begin{equation*}
a_n=\frac{a_1}{n^a}
\hspace{1.5cm}\text{and}\hspace{1.5cm}
b_n=\frac{b_1}{n^b}
\end{equation*}
where $a_1>0$, $b_1>0$ and $1/2<a<b<1$. Then, $(\vtnh)$ and $(\vtnt)$
share the same QSL
\begin{equation}
\label{QSL3}
\lim_{n \rightarrow \infty} \frac{1}{n^{1-b}} \sum_{k=1}^n \bigl(  \vtkh - \vta \bigr)^2= \Bigl(\frac{b_1}{2(1-b)} \Bigr)\tau^2_{\alpha}(\theta_\alpha)
\hspace{1cm}\text{a.s.}
\end{equation}
%\begin{equation}
%\label{QSL4}
%\lim_{n \rightarrow \infty} \frac{1}{n^{1-b}} \sum_{k=1}^n \bigl(  \vtkt - \vta \bigr)^2= \frac{b_1}{2(1-b)} \tau^2_{\alpha}(\theta_\alpha)
%\hspace{1cm}\text{a.s.}
%\end{equation}
In addition, they also share the same LIL
\begin{eqnarray}  
\limsup_{n \rightarrow \infty} \left(\frac{n^b}{2 (1-b)\log n} \right)^{1/2}
 \!\!\!\bigl( \vtnh - \vta \bigr)
&=& - \liminf_{n \rightarrow \infty}
\left(\frac{n^b}{2 (1-b)\log n}\right)^{1/2} 
\!\!\!\bigl( \vtnh - \vta \bigr) \notag \\
&=& \left(  \frac{b_1}{2}  \right)^{1/2} \tau_{\alpha}(\theta_\alpha)
\hspace{1cm}\text{a.s.}
\label{LIL3}
\end{eqnarray}
%\begin{eqnarray}  
%\limsup_{n \rightarrow \infty} \left(\frac{n^b}{2  \log n} \right)^{1/2}
% \!\!\!\bigl( \vtnt - \vta \bigr)
%&=& - \liminf_{n \rightarrow \infty}
%\left(\frac{n^b}{2 \log n}\right)^{1/2} 
%\!\!\!\bigl( \vtnt - \vta \bigr) \notag \\
%&=& \left(  \frac{b_1(1-b)}{2}  \right)^{1/2} \tau_{\alpha}(\theta_\alpha)
%\hspace{1cm}\text{a.s.}
%\label{LIL4}
%\end{eqnarray}
In particular,
\begin{equation*}
\label{LILSUPRM3}  
\limsup_{n \rightarrow \infty} \left(\frac{n^b}{2 (1-b)\log n} \right)
\bigl( \vtnh - \vta \bigr)^2= \Bigl(\frac{b_1}{2} \Bigr)\tau^2_{\alpha}(\theta_\alpha)
\hspace{1cm}\text{a.s.}
\end{equation*} 
%\begin{equation}
%\label{LILSUPRM4}  
%\limsup_{n \rightarrow \infty} \left(\frac{n^b}{2  \log n} \right)
%\bigl( \vtnt - \vta \bigr)^2= \frac{b_1(1-b)}{2} \tau^2_{\alpha}(\theta_\alpha)
%\hspace{1cm}\text{a.s.}
%\end{equation} 
\end{thm}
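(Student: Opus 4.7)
The plan is to recast both recursions as perturbed linear recursions driven by a common martingale whose conditional variance converges to $\tau_\alpha^2(\ta)$, and then to apply the QSL of Pelletier \cite{Pelletier1998} together with a martingale LIL for martingale transforms along the lines of \eqref{LILRMStepa} and \eqref{QSLRMStepa}. The convexified iterate $(\vtnt)$ is treated first because the Rockafellar--Uryasev identity makes the analysis direct. Introducing the function $h(\theta)=\theta+\dE[(X-\theta)\ind_{\{X>\theta\}}]/(1-\alpha)$ and the martingale difference
\begin{equation*}
\xi'_{n+1} = \frac{1}{1-\alpha}\bigl((X_{n+1}-\tn)\ind_{\{X_{n+1}>\tn\}} - \dE[(X-\tn)\ind_{\{X>\tn\}}\mid \cF_n]\bigr),
\end{equation*}
the recursion \eqref{TTSALGO2} reads $\vtnpt-\vta = (1-b_n)(\vtnt-\vta) + b_n(h(\tn)-h(\ta)) + b_n \xi'_{n+1}$. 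A direct differentiation shows $h'(\ta)=1-(1-F(\ta))/(1-\alpha)=0$, so under $(\cA_2)$ the bias satisfies $|h(\tn)-h(\ta)|=O((\tn-\ta)^2)$, and by \eqref{LILRMStepa} it is $O(n^{-a}\log n)$ almost surely. Since $a>1/2>b/2$, its accumulation against the factors $\pi_{k,n}=\prod_{j=k+1}^n(1-b_j)$ is $o(n^{-b/2}\sqrt{\log\log n})$, while the conditional variance of $\xi'_{n+1}$ converges to $\tau_\alpha^2(\ta)$. Hence the martingale limit theorems yield \eqref{QSL3} and \eqref{LIL3} for $(\vtnt)$.

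For the original iterate $(\vtnh)$ the key algebraic identity is
\begin{equation*}
\xi_{n+1}:=\frac{X_{n+1}}{1-\alpha}\ind_{\{X_{n+1}>\tn\}} - g(\tn) = \xi'_{n+1} - \frac{\tn}{1-\alpha}\zeta_{n+1},
\end{equation*}
where $g(\theta)=\dE[X\ind_{\{X>\theta\}}]/(1-\alpha)$ satisfies $g'(\ta)=-\ta f(\ta)/(1-\alpha)$ and $\zeta_{n+1}=\ind_{\{X_{n+1}\leq\tn\}}-F(\tn)$ is the martingale increment of the quantile update. Substituting into \eqref{TTSALGO1} and using the relation $-a_n\zeta_{n+1}=(\tnp-\tn)+a_n(F(\tn)-\alpha)$ coming from the $\theta$-update, the first-order term $b_n g'(\ta)(\tn-\ta)$ produced by the Taylor expansion of $g(\tn)-g(\ta)$ cancels exactly the first-order contribution extracted from $-b_n\tn\zeta_{n+1}/(1-\alpha)$. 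One ends up with
\begin{equation*}
\vtnph-\vta = (1-b_n)(\vtnh-\vta) + b_n\xi'_{n+1} + c_n(\tnp-\tn) + O\bigl(b_n(\tn-\ta)^2\bigr),
\end{equation*}
where $c_n=b_n\tn/((1-\alpha)a_n)$ is of order $n^{a-b}$ and vanishes because $a<b$. The martingale skeleton is therefore the same as for $(\vtnt)$.

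The remaining point is the telescoping term $T_n=\sum_{k=1}^n\pi_{k,n}c_k(\theta_{k+1}-\theta_k)$ that appears when the recursion is iterated. An Abel summation gives
\begin{equation*}
T_n = c_n(\theta_{n+1}-\ta) - \pi_{0,n}c_0(\theta_1-\ta) + \sum_{k=1}^n\bigl(\pi_{k-1,n}c_{k-1}-\pi_{k,n}c_k\bigr)(\theta_k-\ta).
\end{equation*}
The boundary term is $O(n^{a-b}\cdot n^{-a/2}\sqrt{\log n})=O(n^{a/2-b}\sqrt{\log n})$, which is $o(n^{-b/2}\sqrt{\log\log n})$ precisely because $a<b$; the same exponent governs the sum after writing $\pi_{k-1,n}c_{k-1}-\pi_{k,n}c_k=\pi_{k,n}((c_{k-1}-c_k)-b_k c_{k-1})$ and invoking \eqref{LILRMStepa} once more. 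Thus $T_n$ and the quadratic remainder are negligible, the recursions for $\vtnh-\vta$ and $\vtnt-\vta$ coincide asymptotically, and \eqref{QSL3} and \eqref{LIL3} follow for both sequences. The main obstacle is the cancellation in the non-convexified recursion: without the identity relating $\xi_{n+1}$ and $\xi'_{n+1}$ the naive limiting variance would be $\sigma_\alpha^2(\ta)$ instead of the sharp $\tau_\alpha^2(\ta)$, and the hidden martingale structure in $(\tn-\ta)$ would feed spurious cross-terms into the asymptotics.
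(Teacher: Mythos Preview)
Your sketch is correct and captures the essential mechanism, but the packaging differs from the paper's. For the convexified estimator $(\vtnt)$ you do exactly what the paper does: exploit $L_\alpha'(\ta)=0$ so that the bias is quadratic in $\tn-\ta$ and hence negligible, leaving only the martingale $\xi'_{n+1}$ whose conditional variance tends to $\tau_\alpha^2(\ta)$.

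For $(\vtnh)$ the two arguments diverge in bookkeeping though not in substance. You substitute the $\theta$-recursion into $\zeta_{n+1}$ to create the telescoping term $c_n(\tnp-\tn)$ and then Abel-sum it; the boundary term $c_n(\theta_{n+1}-\ta)=O(n^{a/2-b}\sqrt{\log n})$ is negligible precisely when $a<b$, and the summed increments obey the same bound. The paper instead introduces the auxiliary sequence $\Delta_n=(\vtnh-\vta)-\delta_n(\tn-\ta)$ with the tailored weight $\delta_{n+1}=C_\alpha b_n/(a_nf(\ta)-b_n)$, chosen so that $\Delta_n$ satisfies a recursion driven by the single martingale increment $W_{n+1}=\veps_{n+1}+(a_n\delta_{n+1}/b_n)V_{n+1}$, whose conditional variance converges to $\tau_\alpha^2(\ta)$; the remainder terms $\cH_n,\cR_n$ are then bounded directly, without any Abel summation. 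The two devices are equivalent---observe that $c_n\sim\ta b_n/((1-\alpha)a_n)\sim\delta_{n+1}$, so your Abel boundary term is exactly the paper's correction $\delta_n(\tn-\ta)$---but the change of variables gives a cleaner decomposition $\Delta_{n+1}=P_n^{-1}(\cM_{n+1}+\cH_{n+1}+\cR_{n+1})$ to which the martingale QSL and LIL (the paper invokes \cite{bercu2004} and \cite{Stout1970} rather than \cite{Pelletier1998}) apply at once. Your route is more explicit about where the variance reduction $\sigma_\alpha^2\to\tau_\alpha^2$ comes from, at the cost of handling the telescoping sum by hand.

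One small slip: the LIL normalization in \eqref{LIL3} is $(n^b/\log n)^{1/2}$, not $(n^b/\log\log n)^{1/2}$, so the remainders must be shown to be $o(n^{-b/2}\sqrt{\log n})$ rather than $o(n^{-b/2}\sqrt{\log\log n})$; this does not affect your argument since $a<b$ already gives polynomial slack.
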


\begin{rem} Similar computations in the case where $1/2<b<a<1$ would lead to the same results for the convexified algorithm $(\vtnt)$. However, for the 
standard algorithm $(\vtnh)$, it is necessary to replace the asymptotic variance $\tau_\alpha^2(\ta)$ by $\sigma_\alpha^2(\ta)$. This emphasizes 
the interest of using the convexified algorithm.
\end{rem}

We  now focus our attention on the asymptotic normality of our two-time-scale stochastic algorithms \eqref{TTSALGO1} and \eqref{TTSALGO2}.

\begin{thm}
\label{T-AN}
Assume that $(\mathcal{A}_1)$ and $(\mathcal{A}_2)$ hold and that the random variable $X$ has a moment of order $>2/a$. Moreover,
suppose that $f(\ta)>0$ and that the step sequences $(a_n)$ and $(b_n)$ are given by
\begin{equation*}
a_n=\frac{a_1}{n^a}
\hspace{1.5cm}\text{and}\hspace{1.5cm}
b_n=\frac{b_1}{n^b}
\end{equation*}
where $a_1>0$, $b_1 >0$ and $1/2<a<b \leq 1$ with $b_1>1/2$ if $b=1$. Then, $(\vtnh)$ and $(\vtnt)$
share the same joint asymptotic normality
\begin{equation}
\label{AN1}
\begin{pmatrix}
\sqrt{n^a} \bigl(\theta_n - \ta\bigr) \vspace{1ex} \\
\sqrt{n^b} \bigl( \vtnh - \vta\bigr) \\
\end{pmatrix} \liml \cN\left(0,  \begin{pmatrix}
\Gamma_{\ta} & 0 \\
0 & \Gamma_{\vta} \\
\end{pmatrix}\right)
\end{equation}
where the asymptotic variances are given by
\begin{equation*}
\Gamma_{\ta}= \frac{a_1\alpha(1-\alpha)}{2f(\ta)}
\end{equation*}
and
\begin{equation*}
\Gamma_{\vta}= \left \{
\begin{array}[c]{ccc}
{\displaystyle \frac{b_1^2 \tau^2_{\alpha}(\theta_\alpha)}{2b_1 - 1}}  & \text{if} & b=1, \vspace{1ex} \\
{\displaystyle \frac{b_1 \tau^2_{\alpha}(\theta_\alpha)}{2}} & \text{if} & b<1.
\end{array}
\right.
\end{equation*}
\end{thm}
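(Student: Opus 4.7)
The plan is to handle the two components separately and then deduce the joint convergence with diagonal asymptotic covariance from the time-scale separation $a<b$. The marginal convergence $\sqrt{n^a}(\theta_n-\ta)\liml \cN(0,\Gamma_{\ta})$ is the classical Sacks--Fabian central limit theorem for the Robbins--Monro quantile estimator, already recalled in \eqref{ANRMStepa} (or \eqref{ANRM} when $a=1$); its hypotheses are guaranteed by $(\mathcal{A}_1)$ and $f(\ta)>0$.

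For the superquantile component I would linearize the recursion around $(\ta,\vta)$ and write it as
\[
\vtnph-\vta=(1-b_n)(\vtnh-\vta)+b_n\,\epsilon_{n+1}+b_n\,r_n,
\]
where $\epsilon_{n+1}$ is a centered, square-integrable $\cF_n$-martingale increment and $r_n$ is a deterministic (given $\cF_n$) remainder satisfying $r_n=-c(\tn-\ta)+O((\tn-\ta)^2)$ with $c=\ta\Phi(\ta)/(1-\alpha)$, by a Taylor expansion under $(\mathcal{A}_2)$. For the convexified algorithm \eqref{TTSALGO2} the natural choice is $\epsilon_{n+1}=\ta+(X_{n+1}-\ta)\rI_{\{X_{n+1}>\ta\}}/(1-\alpha)-\vta$, whose conditional variance is exactly $\tau_\alpha^2(\ta)$. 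For the standard algorithm \eqref{TTSALGO1} the same $\tau_\alpha^2(\ta)$ arises after expanding $\rI_{\{X_{n+1}>\tn\}}=\rI_{\{X_{n+1}>\ta\}}-(\rI_{\{X_{n+1}\leq\tn\}}-\rI_{\{X_{n+1}\leq\ta\}})$, which produces a cancellation against the Robbins--Monro noise $\xi_{n+1}=\rI_{\{X_{n+1}\leq\tn\}}-\alpha$ driving the $\theta$-step and reduces the naive variance $\sigma_\alpha^2(\ta)$ to $\tau_\alpha^2(\ta)$. Unrolling gives $\vtnph-\vta=\Pi_n(\vartheta_1-\vta)+M_n+S_n$, with $\Pi_n=\prod_{j=1}^{n}(1-b_j)$, principal martingale $M_n=\sum_{k=1}^{n}(\Pi_n/\Pi_k)b_k\epsilon_{k+1}$ and accumulated bias $S_n=\sum_{k=1}^{n}(\Pi_n/\Pi_k)b_k r_k$.

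The Hall--Heyde martingale CLT applied to $M_n$ yields $\sqrt{n^b}M_n\liml \cN(0,\Gamma_{\vta})$: the predictable bracket $n^b\sum_k(\Pi_n/\Pi_k)^2 b_k^2$ is computed by standard asymptotics and produces $\Gamma_{\vta}$ with the announced dichotomy between $b=1$ and $b<1$, while the Lindeberg condition follows from the moment of order $>2/a$ on $X$. The main technical obstacle is to prove $\sqrt{n^b}|S_n|=o_{\dP}(1)$. A crude pointwise LIL bound $|r_k|=O(k^{-a/2}(\log\log k)^{1/2})$, using \eqref{LILRMStepa} or \eqref{LILRM}, gives only $|S_n|=O(n^{-a/2}(\log\log n)^{1/2})$, which is insufficient since $a<b$. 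The remedy is to exploit the near-martingale structure of $-c(\tn-\ta)$: substituting the Robbins--Monro linearization of $\tn-\ta$ into $S_n$ and interchanging summations transforms it into a martingale sum whose predictable bracket is of order $n^{-a-b}$, so that $\sqrt{n^b}S_n$ has variance $O(n^{-a})\to 0$. The quadratic remainder $O((\tn-\ta)^2)$, integrated against the kernel, contributes deterministically $O(n^{-a}\log\log n)=o(n^{-b/2})$ thanks to $a>1/2$.

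To upgrade the two marginal CLTs to the joint statement with diagonal covariance, I would apply a bivariate martingale CLT to the pair of principal martingales. The diagonal brackets reproduce $\Gamma_{\ta}$ and $\Gamma_{\vta}$ as above, while the cross-bracket, rescaled by $\sqrt{n^a n^b}$, carries joint weight of magnitude $n^{-(a+b)}$ on the overlap of the two kernels (of length $O(n^a)$, dictated by the narrower $\theta$-kernel), hence summing to a quantity of order $n^{(a-b)/2}\to 0$ under $a<b$. This asymptotic decorrelation is the signature of the two-time-scale separation and produces the diagonal limiting covariance announced in \eqref{AN1}.
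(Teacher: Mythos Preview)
There is a genuine gap in your treatment of the standard estimator $\vtnh$. Your principal martingale $M_n$, built from the raw increments $\varepsilon_{k+1}=Y_{k+1}-H_\alpha(\theta_k)$, has conditional variance $\sigma_\alpha^2(\theta_k)\to\sigma_\alpha^2(\ta)$, not $\tau_\alpha^2(\ta)$; the indicator manipulation you describe does not change this. You then attempt to show that the bias $S_n=-c\sum_k(\Pi_n/\Pi_k)b_k(\theta_k-\ta)+\ldots$ is $o_{\dP}(n^{-b/2})$ by substituting the Robbins--Monro expansion $\theta_k-\ta\approx -\sum_{j<k}\pi^{(\theta)}_{j,k}V_{j+1}$ and asserting that the resulting martingale has bracket $O(n^{-a-b})$. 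This estimate is wrong. The coefficient of $V_{j+1}$ in $S_n$ is $c_j=c\sum_{k>j}(\Pi_n/\Pi_k)b_k\,\pi^{(\theta)}_{j,k}$; since the $\theta$-kernel $\pi^{(\theta)}_{j,\cdot}$ has width $O(j^a)\ll n^b$ and total mass $\approx 1/f(\ta)$, one finds $c_j\approx (c/f(\ta))(\Pi_n/\Pi_j)b_j$, and the bracket is of order $\sum_j((\Pi_n/\Pi_j)b_j)^2\sim b_n/2=O(n^{-b})$, not $O(n^{-a-b})$. Hence $\sqrt{n^b}S_n$ is of exact order one and \emph{does} contribute to the limit: the variance $\tau_\alpha^2(\ta)$ emerges only from the combination $M_n+S_n$, through the covariance between $\varepsilon_{k+1}$ and $V_{k+1}$, not from $M_n$ alone.

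The paper resolves this by reparametrizing: it sets $\Delta_n=(\vtnh-\vta)-\delta_n(\tn-\ta)$ with $\delta_n$ chosen as in \eqref{DEFDELTAN} so that the linear drift cancels, leaving $\Delta_n$ with martingale increment $W_{n+1}=\varepsilon_{n+1}+(a_n/b_n)\delta_{n+1}V_{n+1}$ whose conditional variance converges directly to $\tau_\alpha^2(\ta)$, together with only a quadratic remainder. The pair $(\theta_n,\Delta_n)$ then fits the Mokkadem--Pelletier two-time-scale CLT, after which $\vtnh$ is recovered from $\Delta_n$ via $\delta_n(\tn-\ta)=o(n^{-b/2})$ a.s. For the convexified estimator $\vtnt$ none of this is needed, because $L_\alpha'(\ta)=0$ (this is the whole point of the convexification), so the remainder is purely $O((\tn-\ta)^2)$---your assertion that it carries a linear piece $-c(\tn-\ta)$ is mistaken in that case---and the paper applies Mokkadem--Pelletier directly. (Incidentally, for $\vtnh$ the linear coefficient is $c=\ta f(\ta)/(1-\alpha)$, not $\ta\Phi(\ta)/(1-\alpha)$.)
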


\begin{rem}
One can observe that the asymptotic covariance matrix in \eqref{AN1} is diagonal. It means that, at the limit, the two algorithms for quantile and superquantile 
estimation are no longer correlated. This is due to the fact that we use two different time scales contrary to Bardou et al. \cite{Bardou2009}. 
Moreover, in the special case where $b=1$, we also recover the same asymptotic variance as the one obtained in \cite{Bardou2009} for the averaged version
of their one-time-scale stochastic algorithm.
\end{rem}

\begin{rem}
The asymptotic variance $\tau^2_{\alpha}(\theta_\alpha)$ can be estimated by 
$$
\tau_n^2= \frac{1}{n}\sum_{k=1}^n \Bigl(\frac{X_k - \theta_{k-1}}{1 - \alpha} \Bigr)^2 \rI_{\{X_{k} >\theta_{k-1}\}}
- \Bigl( \frac{1}{n}\sum_{k=1}^n \Bigl(\frac{X_k - \theta_{k-1}}{1 - \alpha} \Bigr) \rI_{\{X_{k} >\theta_{k-1}\}} \Bigr)^2.
$$ 
Via the same lines as in the proof of the almost sure convergences \eqref{ASCVGSQ1} and \eqref{ASCVGSQ2}, one can verify that
$\tau_n^2 \rightarrow \tau^2_{\alpha}(\theta_\alpha)$ a.s. Therefore, using Slutsky's Theorem, we deduce from \eqref{AN1} that
$(\vtnh)$ and $(\vtnt)$ share the same asymptotic normality
\begin{equation}
\label{AN2}
\sqrt{n^b} \Bigl( \frac{\vtnh - \vta}{\tau_n} \Bigr) \liml \cN(0,\nu^2)
\end{equation}
where
\begin{equation*}
\nu^2= \left \{
\begin{array}[c]{ccc}
{\displaystyle \frac{b_1^2}{2b_1 - 1}}  & \text{if} & b=1, \vspace{1ex} \\
{\displaystyle \frac{b_1}{2}} & \text{if} & b<1.
\end{array}
\right.
\end{equation*}
Convergence \eqref{AN2} allows us to construct asymptotic confidence intervals for the superquantile $\vta$. 
\end{rem}

%%%%%%%%%%%%%%%%%%%%%%%%%%%%%%%%%%%%%%%%%%%%%%%%%%%%%%%%%%%%%%%%%%%%%%%%%%%%%%%%%%%%%%%%%%%%%%%%%%

\section{Our martingale approach}
\label{S-MA}

%%%%%%%%%%%%%%%%%%%%%%%%%%%%%%%%%%%%%%%%%%%%%%%%%%%%%%%%%%%%%%%%%%%%%%%%%%%%%%%%%%%%%%%%%%%%%%%%%%

All our analysis relies on a decomposition of our estimates as sum of a martingale increment and a drift term.
More precisely, it follows from \eqref{TTSALGO1} and \eqref{TTSALGO2} that for all $n \geq 1$,
\begin{equation}
\label{TTSALGOYZ}
\left \{
\begin{array}[c]{ccc}
\vtnph & = & (1-b_n)\vtnh+b_n Y_{n+1} \vspace{2ex} \\
\vtnpt & = & (1-b_n)\vtnt+b_n Z_{n+1}
\end{array}
\right.
\end{equation}
where 
$$
Y_{n+1}=\frac{X_{n+1}}{1-\alpha}\rI_{\{X_{n+1} >\tn\}} 
$$
and
$$
Z_{n+1}=\tn+ \frac{(X_{n+1}-\tn)}{1-\alpha}\rI_{\{X_{n+1} >\tn\}}.
$$
Let $H_\alpha(\theta)$ and $L_\alpha(\theta)$ be the functions defined, for all $\theta \in \dR$, by
\begin{equation}
\label{DEFHL}
H_\alpha(\theta)=\frac{1}{1-\alpha} \dE[X \rI_{\{X >\theta\}}] \hspace{0.5cm}\text{and}\hspace{0.5cm}
L_\alpha(\theta)= \theta +\frac{1}{1-\alpha} \dE[(X-\theta) \rI_{\{X >\theta\}}].
\end{equation}
We clearly have that almost surely
$$\dE[Y_{n+1} | \cF_n]=H_\alpha(\tn) \hspace{1cm}\text{and}\hspace{1cm} \dE[Z_{n+1} | \cF_n]=L_\alpha(\tn).$$
It allows use to split $Y_{n+1}$ and $Z_{n+1}$ as sum of a martingale increment and a drift term,
$Y_{n+1}=\veps_{n+1} + H_\alpha(\tn)$ and  $Z_{n+1}=\xi_{n+1} + L_\alpha(\tn)$. One can also verify that
$\dE[\veps_{n+1}^2 | \cF_n]=\sigma_\alpha^2(\tn)$ and
$\dE[\xi_{n+1}^2 | \cF_n]=\tau_\alpha^2(\tn)$ where the two variances are given by 
\eqref{DEFVAR}.
Then, we immediately deduce from \eqref{TTSALGOYZ} that for all $n \geq 1$,
\begin{equation}
\label{TTSALGOE}
\left \{
\begin{array}[c]{ccc}
\vtnph & = & (1-b_n)\vtnh+b_n (\veps_{n+1} + H_\alpha(\tn)) \vspace{2ex} \\
\vtnpt & = & (1-b_n)\vtnt+b_n (\xi_{n+1}+L_\alpha(\tn)).
\end{array}
\right.
\end{equation}
Hereafter, assume for the sake of simplicity that for all $n \geq 1$, $b_n<1$, since this is true for $n$ large enough.
Let $(P_n)$ be the increasing sequence of positive real numbers defined by
\begin{equation}
\label{DEFPN}
P_n= \prod_{k=1}^n (1-b_k)^{-1}
\end{equation}
with the convention that $P_0\!=\!1$.
Since $(1-b_n)P_n=P_{n-1}$, we obtain from \eqref{TTSALGOE} that 
\begin{equation*}
\left \{
\begin{array}[c]{ccc}
P_n\vtnph & = & P_{n-1}\vtnh+P_nb_n (\veps_{n+1} + H_\alpha(\tn)) \vspace{2ex} \\
P_n\vtnpt & = & P_{n-1}\vtnt+P_n b_n (\xi_{n+1}+L_\alpha(\tn))
\end{array}
\right.
\end{equation*}
which implies the martingale decomposition
\begin{equation}
\label{DECMART}
\left \{
\begin{array}[c]{ccc}
 \vtnph & = & {\displaystyle \frac{1}{P_{n}}\Bigl( \wh{\vartheta}_1+M_{n+1} + H_{n+1} \Bigr)} \vspace{1ex}\\
\vtnpt & = &{\displaystyle \frac{1}{P_{n}}\Bigl( \widetilde{\vartheta}_1+N_{n+1} + L_{n+1} \Bigr)}
\end{array}
\right.
\end{equation}
where
\begin{equation}
\label{DEFMART}
M_{n+1}=\sum_{k=1}^{n} b_k P_k \veps_{k+1}, \hspace{1.5cm}N_{n+1}=\sum_{k=1}^{n} b_k P_k \xi_{k+1}
\end{equation}
\begin{equation}
\label{DEFADDT}
H_{n+1}= \sum_{k=1}^{n} b_k P_k H_\alpha(\theta_k), \hspace{1.5cm}L_{n+1}= \sum_{k=1}^{n} b_k P_k L_\alpha(\theta_k).
\end{equation}
Our strategy is to establish the asymptotic behavior of the two martingales $(M_n)$ and $(N_n)$ 
as well as to determine the crucial role played by the two drift terms $(H_n)$ and $(L_n)$. Several results in our analysis rely on the following keystone lemma 
which concerns the convexity properties of the functions $H_\alpha$ and $L_\alpha$ defined in \eqref{DEFHL}.

\begin{lem}
\label{L-HL}
Assume that $(\mathcal{A}_1)$ and $(\mathcal{A}_2)$ hold. Then, $L_\alpha$ is a convex function such that 
$L_\alpha(\ta)=\vta$, $L_\alpha^\prime(\ta)=0$ and that for all $ \theta \in \dR$,
\begin{equation}
\label{TAYLORL}
0 \leq L_\alpha(\theta)  - L_\alpha(\ta)\leq \frac{ ||f||_\infty}{2(1- \alpha)} \bigl( \theta - \ta  \bigr)^2.
\end{equation}
In addition, we also have $H_\alpha(\ta)=\vta$ and that for all $ \theta \in \dR$,
\begin{equation}
\label{TAYLORH}
\Bigl| H_\alpha(\theta)  - H_\alpha(\ta) + \frac{\ta f(\ta)}{1-\alpha} (\theta -\ta )\Bigr|\leq \frac{ ||\Phi||_\infty}{2(1- \alpha)} \bigl( \theta - \ta  \bigr)^2.
\end{equation}
\end{lem}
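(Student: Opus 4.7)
The plan is to reduce everything to standard one-variable calculus by computing the first and second derivatives of $L_\alpha$ and $H_\alpha$ explicitly, and then invoking Taylor's theorem with integral or Lagrange remainder. First, I would verify the two identities $L_\alpha(\ta)=\vta$ and $H_\alpha(\ta)=\vta$ by direct substitution. Since $X$ is continuous, $\dP(X=\ta)=0$, so the definition \eqref{DEFSQ} of $\vta$ gives $H_\alpha(\ta)=\vta$ immediately, and
\begin{equation*}
L_\alpha(\ta)=\ta+H_\alpha(\ta)-\ta\,\frac{\dP(X>\ta)}{1-\alpha}=\ta+\vta-\ta=\vta.
\end{equation*}

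Next, I would differentiate under the integral sign. Writing $H_\alpha(\theta)=\frac{1}{1-\alpha}\int_\theta^\infty x f(x)\,dx$, one gets
\begin{equation*}
H_\alpha^\prime(\theta)=-\frac{\theta f(\theta)}{1-\alpha},\qquad H_\alpha^{\prime\prime}(\theta)=-\frac{f(\theta)+\theta f^\prime(\theta)}{1-\alpha}=-\frac{\Phi(\theta)}{1-\alpha},
\end{equation*}
where differentiability is justified by $(\mathcal{A}_1)$. Similarly, writing $L_\alpha(\theta)=\theta+\frac{1}{1-\alpha}\int_\theta^\infty (x-\theta)f(x)\,dx$ and using that the boundary term vanishes, one obtains
\begin{equation*}
L_\alpha^\prime(\theta)=1-\frac{1-F(\theta)}{1-\alpha}=\frac{F(\theta)-\alpha}{1-\alpha},\qquad L_\alpha^{\prime\prime}(\theta)=\frac{f(\theta)}{1-\alpha}\geq 0.
\end{equation*}
Thus $L_\alpha$ is convex, and since $F(\ta)=\alpha$ by \eqref{DEFQ}, one has $L_\alpha^\prime(\ta)=0$. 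In particular, $\ta$ is a global minimizer of $L_\alpha$, which provides the lower bound $L_\alpha(\theta)-L_\alpha(\ta)\geq 0$ in \eqref{TAYLORL}.

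Finally, for the quantitative estimates I would apply Taylor's theorem with Lagrange remainder. For $L_\alpha$, since $L_\alpha^\prime(\ta)=0$,
\begin{equation*}
L_\alpha(\theta)-L_\alpha(\ta)=\tfrac{1}{2}L_\alpha^{\prime\prime}(\xi)(\theta-\ta)^2=\frac{f(\xi)}{2(1-\alpha)}(\theta-\ta)^2
\end{equation*}
for some $\xi$ between $\theta$ and $\ta$, which gives \eqref{TAYLORL} after bounding $f(\xi)\leq\|f\|_\infty$. For $H_\alpha$, the analogous expansion together with $H_\alpha^\prime(\ta)=-\ta f(\ta)/(1-\alpha)$ and $H_\alpha^{\prime\prime}(\xi)=-\Phi(\xi)/(1-\alpha)$ yields directly \eqref{TAYLORH} after bounding $|\Phi(\xi)|\leq\|\Phi\|_\infty$.

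There is no genuine obstacle here; the lemma is a calculus exercise. The only minor care-points are justifying the differentiation under the integral (granted by the integrability of $X$ and $(\mathcal{A}_1)$) and interpreting $\|f\|_\infty$ and $\|\Phi\|_\infty$ as local suprema on the neighborhoods provided by $(\mathcal{A}_1)$ and $(\mathcal{A}_2)$, which is enough for the subsequent martingale analysis since the iterates $\tn$ stay in such a neighborhood eventually.
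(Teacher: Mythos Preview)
Your proposal is correct and follows essentially the same approach as the paper: compute $L_\alpha'(\theta)=\frac{F(\theta)-\alpha}{1-\alpha}$, $L_\alpha''(\theta)=\frac{f(\theta)}{1-\alpha}$, $H_\alpha'(\theta)=-\frac{\theta f(\theta)}{1-\alpha}$, $H_\alpha''(\theta)=-\frac{\Phi(\theta)}{1-\alpha}$, and then apply a second-order Taylor expansion. The only cosmetic difference is that the paper uses the integral form of the remainder while you use the Lagrange form, and you additionally spell out the verifications $H_\alpha(\ta)=L_\alpha(\ta)=\vta$ and the observation that local bounds on $f$ and $\Phi$ suffice; none of this changes the argument.
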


\begin{proof}
It follows from \eqref{DEFHL} that for all $\theta \in \dR$,
$$
L_\alpha^\prime(\theta)= \frac{F(\theta)-\alpha}{1-\alpha} 
\hspace{1cm}\text{and}\hspace{1cm}
L_\alpha^{\prime \prime}(\theta)= \frac{f(\theta)}{1-\alpha} .
$$
Consequently, $L_\alpha$ is a convex function such that $L_\alpha^\prime(\ta)=0$. Hence, we deduce from a 
Taylor expansion with integral remainder that for all $ \theta \in \dR$,
\begin{equation*}
L_\alpha(\theta)  = L_\alpha(\ta) + (\theta - \ta)^2 \int_0^1 (1-t) L_\alpha^{\prime \prime}(\theta_t)dt
\end{equation*}
where $\theta_t=\ta+t(\theta - \ta)$, which immediately leads to \eqref{TAYLORL} using $(\mathcal{A}_1)$. Unfortunately,
$H_\alpha$ is not a convex function. However, we obtain from \eqref{DEFHL} that for all $\theta \in \dR$,
$$
H_\alpha^\prime(\theta)= -\frac{\theta f(\theta)}{1-\alpha} 
\hspace{1cm}\text{and}\hspace{1cm}
H_\alpha^{\prime \prime}(\theta)= -\frac{\Phi(\theta)}{1-\alpha} 
$$
where $\Phi(\theta)=f(\theta)+ \theta f^\prime(\theta)$. Finally, \eqref{TAYLORH} follows once again from a Taylor expansion with integral remainder
together with $(\mathcal{A}_2)$.
\end{proof}

\noindent
We have just seen that the function $H_\alpha$ is not convex. Consequently, in order to prove sharp asymptotic properties 
for the sequence $(\vtnh)$, it is necessary to slightly modify the first martingale decomposition in \eqref{DECMART}. For all $\theta \in \dR$, let
\begin{equation}
\label{DEFREMAINDERS}
\left \{
\begin{array}[c]{ccc}
G_\alpha(\theta)  & = & F(\theta) - \alpha -f(\ta)(\theta - \ta),\vspace{2ex}\\
R_\alpha(\theta)  & = & H_\alpha(\theta) - \vta +C_\alpha (\theta - \ta)
\end{array}
\right.
\end{equation}
where
$$
C_\alpha=- H_\alpha^\prime(\ta)= \frac{\ta f(\ta)}{1-\alpha}.
$$
We deduce from \eqref{TTSALGO1}, \eqref{TTSALGOE} and \eqref{DEFREMAINDERS} that for all $n \geq 1$,
\begin{equation}
\label{DECALGON1}
\left \{
\begin{array}[c]{ccl}
 \tnp -\ta \!& \!\!=\! \!&\! (1-a_n f(\ta)) (\tn - \ta)-a_n \bigl( V_{n+1} + G_\alpha(\theta_n) \bigr), \vspace{2ex}\\
\vtnph -\vta \!& \!\!=\! \!&\!  (1-b_n)(\vtnh -\vta)+b_n \bigl(\veps_{n+1} \!+\! R_\alpha(\tn) \!-\! C_\alpha (\theta_n - \ta)\bigr)
\end{array}
\right.
\end{equation}
with $V_{n+1}=\rI_{\{X_{n+1} \leq \tn \}} - F(\theta_n)$. Hereafter, we shall consider a tailor-made weighted sum of our estimates given by
$\Delta_1=0$ and, for all $n\geq 2$,
\begin{equation}
\label{DIFDELTA}
\Delta_n = (\vtnh -\vta ) - \delta_n (\tn -\ta)
\end{equation}
where $(\delta_n)$ is a deterministic sequence, depending on $(a_n)$ and $(b_n)$, which will be explicitly given below.
It follows from \eqref{DECALGON1} together with straightforward calculation that for all $n\geq 2$, 
\begin{equation}
\label{DECALGON2}
\Delta_{n+1}=(1-b_n) \Delta_n +b_n \Bigl(W_{n+1}+ R_\alpha(\tn) +\frac{a_n}{b_n}\delta_{n+1} G_\alpha(\theta_n) +\nu_{n+1} (\tn - \ta) \Bigr)
\end{equation}
where
\begin{equation}
\label{DEFWN}
W_{n+1}= \veps_{n+1}+\frac{a_n}{b_n}\delta_{n+1}V_{n+1}
\end{equation}
and
\begin{equation}
\label{DEFNUN}
\nu_{n+1} = \frac{1}{b_n}\Bigl((1-b_n)\delta_n -  C_\alpha b_n-\delta_{n+1}(1-a_n f(\ta))\Bigr).
\end{equation}
We have several strategies in order to simplify the expression of $\nu_{n+1}$. A first possibility
that cancels several terms in \eqref{DEFNUN} is to choose
$$
\delta_{n+1}=\frac{C_\alpha  b_n}{f(\ta) a_n}.
$$
It clearly reduces $\nu_{n+1}$ to
$$
\nu_{n+1}=\frac{(1-b_n)\delta_n - \delta_{n+1}}{b_n}.
$$
Another more sophisticated choice, which only works if $a_n f(\theta_\alpha) -b_n\neq 0$, is to take
\begin{equation}
\label{DEFDELTAN}
\delta_{n+1}= \frac{C_\alpha  b_n}{a_n f(\theta_\alpha) -b_n}.
\end{equation}
It implies that
\begin{equation}
\label{DEFNUNN}
\nu_{n+1}=\frac{(1-b_n)(\delta_n - \delta_{n+1})}{b_n}.
\end{equation}
The two choices are quite similar in the special case where $b_n=b_1/n$. However, in the case where the step
$b_n=b_1/n^b$ with $a<b<1$, the second choice outperforms the first one as $\nu_n$ goes faster towards zero
as $n$ grows to infinity. Throughout the sequel, we shall make use of the second choice given by \eqref{DEFDELTAN}.
We deduce from \eqref{DECALGON2} the new martingale decomposition
\begin{equation}
\label{DECALGON3}
\Delta_{n+1} =  \frac{1}{P_{n}}\Bigl( \cM_{n+1} +\cH_{n+1} +\cR_{n+1}\Bigr)
\end{equation}
where
\begin{equation}
\label{DEFMARTNEW}
\cM_{n+1}=\sum_{k=1}^{n} b_k P_k W_{k+1}, \hspace{1.5cm}\cH_{n+1}=\sum_{k=1}^{n} b_k P_k \nu_{k+1}\bigl(\theta_k - \ta\bigr) 
\end{equation}
and
\begin{equation}
\label{DEFRNEW}
\cR_{n+1}= \sum_{k=1}^{n} b_k P_k \Bigl(  R_\alpha(\theta_k) + \frac{a_k}{b_k}\delta_{k+1}G_\alpha(\theta_k)\Bigr). 
\end{equation}

%%%%%%%%%%%%%%%%%%%%%%%%%%%%%%%%%%%%%%%%%%%%%%%%%%%%%%%%%%%%%%%%%%%%%%%%%%%%%%%%%%%%%%%%%%%%%%%%%%

\section{Proofs of the almost sure convergence results}
\label{S-PRASCVG}

%%%%%%%%%%%%%%%%%%%%%%%%%%%%%%%%%%%%%%%%%%%%%%%%%%%%%%%%%%%%%%%%%%%%%%%%%%%%%%%%%%%%%%%%%%%%%%%%%%

\subsection{The basic almost sure properties.}
The starting point in our analysis of the almost sure convergence of our estimates is the following lemma.

\begin{lem}
\label{L-ASCVG}
Assume that $(\mathcal{A}_1)$ holds and that the random variable $X$ is square integrable.
Then, we have the almost sure convergences
\begin{equation}
\label{ASCVGMN}
\lim_{n \rightarrow \infty} \frac{M_{n+1}}{P_{n}} = 0 
\hspace{1cm}\text{and}\hspace{1cm}
\lim_{n \rightarrow \infty} \frac{N_{n+1}}{P_{n}} = 0 
\hspace{1cm}\text{a.s.}
\end{equation}
\end{lem}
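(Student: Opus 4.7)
\textbf{Proof plan for Lemma \ref{L-ASCVG}.}
The plan is to view $(M_n)$ and $(N_n)$ as square-integrable martingales with respect to the natural filtration $(\cF_n)$ and to deduce the desired normalization via martingale convergence combined with Kronecker's lemma. First, I would note that the identity $(1-b_n)P_n=P_{n-1}$ gives $b_nP_n=P_n-P_{n-1}$, so that $P_n=1+\sum_{k=1}^n b_kP_k$, and since $\sum b_n=\infty$, the deterministic sequence $(P_n)$ increases to $+\infty$. Next, by construction, $\veps_{k+1}=Y_{k+1}-H_\alpha(\theta_k)$ and $\xi_{k+1}=Z_{k+1}-L_\alpha(\theta_k)$ are $(\cF_{k+1})$-martingale increments with conditional variances $\sigma_\alpha^2(\theta_k)$ and $\tau_\alpha^2(\theta_k)$, so that $(M_n)$ and $(N_n)$ are genuine square-integrable martingales.

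The central step is to introduce the auxiliary martingales $\bar M_n=\sum_{k=1}^{n-1} b_k\veps_{k+1}$ and $\bar N_n=\sum_{k=1}^{n-1} b_k\xi_{k+1}$ and to prove they both converge almost surely. For $\bar M$, the uniform estimate $\sigma_\alpha^2(\theta)\leq \dE[X^2]/(1-\alpha)^2$, obtained from $\text{Var}(X\rI_{\{X>\theta\}})\leq \dE[X^2\rI_{\{X>\theta\}}]\leq \dE[X^2]$, together with $\sum b_n^2<\infty$, makes the predictable bracket $\langle \bar M\rangle_\infty$ finite, so the $L^2$-martingale convergence theorem yields a.s. convergence of $\bar M_n$. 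For $\bar N$, the analogous bound on $\tau_\alpha^2(\theta)$ involves $\theta^2$ and is not uniform, but the classical Robbins--Monro convergence \eqref{ASCVGRM} ensures that $(\tn)$ is almost surely bounded, whence $\tau_\alpha^2(\theta_k)\leq C$ a.s. and $\langle \bar N\rangle_\infty<\infty$ a.s.; $\bar N_n$ therefore converges a.s. as a locally square-integrable martingale, which can be made rigorous through a standard localization along the stopping times $\tau_L=\inf\{n:|\tn|>L\}$ followed by sending $L\to\infty$.

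It then remains to invoke Kronecker's lemma pathwise: since $P_n\uparrow\infty$ and the series $\sum_{k\geq 1}(b_kP_k\veps_{k+1})/P_k=\sum_{k\geq 1}b_k\veps_{k+1}=\bar M_\infty$ converges a.s., one obtains
\[
\frac{M_{n+1}}{P_n} \;=\; \frac{1}{P_n}\sum_{k=1}^{n} b_kP_k\veps_{k+1} \;\longrightarrow\; 0 \quad\text{a.s.,}
\]
and the identical argument with $\xi_{k+1}$ and $\bar N_n$ in place of $\veps_{k+1}$ and $\bar M_n$ yields $N_{n+1}/P_n\to 0$ a.s. The only delicate point is the absence of a uniform deterministic bound on $\tau_\alpha^2(\theta_k)$, which I would resolve by the localization sketched above; everything else is elementary manipulation of martingales and of deterministic sequences.
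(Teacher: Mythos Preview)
Your proposal is correct and follows essentially the same approach as the paper: introduce the auxiliary martingales $\sum b_k\veps_{k+1}$ and $\sum b_k\xi_{k+1}$, show their brackets are almost surely finite (the paper does this for both via the a.s.\ convergence $\tn\to\ta$ and continuity of $\sigma_\alpha^2,\tau_\alpha^2$, rather than a uniform bound plus localization), deduce their a.s.\ convergence, and then apply Kronecker's lemma with the increasing sequence $P_n\uparrow\infty$. The only cosmetic difference is that the paper invokes the strong law for locally square-integrable martingales directly instead of your localization step, but the substance is identical.
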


\begin{proof}
Let $(\Sigma_n^{\varepsilon})$ and $(\Sigma_n^{\xi})$ be the two locally square integrable martingales 
$$
\Sigma_n^{\varepsilon}= \sum_{k=1}^{n-1} b_k\veps_{k+1},
\hspace{1.5cm} 
\Sigma_n^{\xi}= \sum_{k=1}^{n-1} b_k\xi_{k+1}.
$$
Their predictable quadratic variations \cite{Duflo1997} are
respectively given by
$$
\langle \Sigma^{\varepsilon} \rangle_n= \sum_{k=1}^{n-1} b_k^2 \sigma_\alpha^2(\theta_k) 
\hspace{1cm} \text{and} \hspace{1cm}
\langle \Sigma^{\xi} \rangle_n=\sum_{k=1}^{n-1} b_k^2 \tau_\alpha^2(\theta_k). 
$$
It follows from convergence \eqref{ASCVGRM} and the continuity of the variances $\sigma_\alpha^2(\theta)$ and
$\tau_\alpha^2(\theta)$ given by \eqref{DEFVAR} that $\sigma_\alpha^2(\tn) \longrightarrow \sigma_\alpha^2(\ta)$
and $\tau_\alpha^2(\tn) \longrightarrow \tau_\alpha^2(\ta)$ a.s.
%\begin{equation}
%\label{ASCVGSIG}
%\lim_{n \rightarrow \infty} \sigma_\alpha^2(\tn) = \sigma_\alpha^2(\ta) 
%\hspace{1cm}\text{a.s}
%\end{equation}
%and
%\begin{equation}
%\label{ASCVGTAUX}
%\lim_{n \rightarrow \infty} \tau_\alpha^2(\tn) = \tau_\alpha^2(\ta) 
%\hspace{1cm}\text{a.s}
%\end{equation}
Consequently, we get from 
%\eqref{ASCVGSIG}, \eqref{ASCVGTAUX} and 
the right-hand side of \eqref{CONDSTEP} that 
\begin{equation}
\label{ASCVGIPM}
\lim_{n \rightarrow \infty} \langle \Sigma^{\varepsilon} \rangle_n < + \infty 
\hspace{1cm} \text{and} \hspace{1cm}
\lim_{n \rightarrow \infty} \langle \Sigma^{\xi} \rangle_n < + \infty 
\hspace{1cm}\text{a.s}
\end{equation}
Therefore, we obtain from the strong law of large numbers for martingales given e.g. by theorem 1.3.24 in \cite{Duflo1997} that
$(\Sigma_n^{\varepsilon})$ and $(\Sigma_n^{\xi})$ both converge almost surely.
The rest of the proof proceeds in a standard way with the help of Kronecker's lemma. 
As a matter of fact, we can deduce from the left-hand side of \eqref{CONDSTEP} that
the sequence $(P_n)$, defined in \eqref{DEFPN}, is strictly increasing to infinity. 
In addition, we just showed the almost sure convergence of the series
$$
\sum_{n=1}^{\infty} b_n\veps_{n+1}
\hspace{1cm} \text{and} \hspace{1cm}
\sum_{n=1}^{\infty} b_n\xi_{n+1}.
$$
Consequently, we immediately deduce from Kronecker's lemma that
\begin{equation*}
\lim_{n \rightarrow \infty} \frac{1}{P_{n}} \sum_{k=1}^{n} b_k P_k \veps_{k+1}= 0 
\hspace{1cm}\text{and}\hspace{1cm}
\lim_{n \rightarrow \infty} \frac{1}{P_{n}} \sum_{k=1}^{n} b_k P_k \xi_{k+1}= 0 
\hspace{1cm}\text{a.s.}
\end{equation*}
which is exactly what we wanted to prove.
\end{proof}

\noindent{\bf Proof of Theorem \ref{T-ASCVGSQ}.}
%%%%%%%%%%%%%%%%%%%%%%%%%%%%%%%%%%%%%%%%%%%%%%%%%%%%%%%%%%%%%%%%%%%%%%%%%%%%%%%%%%%%%%%%%%%%%%%%%%
We recall from \eqref{DECMART} that for all $n \geq 1$,
\begin{equation*}
\left \{
\begin{array}[c]{ccc}
 \vtnph & = & {\displaystyle \frac{1}{P_{n}}\Bigl( \wh{\vartheta}_1+M_{n+1} + H_{n+1} \Bigr)} \vspace{1ex}\\
\vtnpt & = &{\displaystyle \frac{1}{P_{n}}\Bigl( \widetilde{\vartheta}_1+N_{n+1} + L_{n+1} \Bigr)}.
\end{array}
\right.
\end{equation*}
We have from \eqref{ASCVGRM} together with the continuity of the functions $H_\alpha$ and $L_\alpha$ that
\begin{equation}
\label{ASCVGH}
\lim_{n \rightarrow \infty} H_\alpha(\tn) = H_\alpha(\ta) 
\hspace{1cm}\text{a.s}
\end{equation}
and
\begin{equation}
\label{ASCVGL}
\lim_{n \rightarrow \infty} L_\alpha(\tn) = L_\alpha(\ta) 
\hspace{1cm}\text{a.s}
\end{equation}
One can observe that $H_\alpha(\ta)= L_\alpha(\ta) =\vta$. 
Moreover, it is easy to see that for all $n\geq 1$, $b_nP_n=P_n-P_{n-1}$.
Hence, we obtain by a telescoping argument that
\begin{equation}
\label{DECPN}
\sum_{k=1}^{n} b_k P_k=P_n -P_0,
\end{equation}
which leads to
$$
\lim_{n \rightarrow \infty} \frac{1}{P_n}\sum_{k=1}^{n} b_k P_k=1.
$$
Therefore, it follows from Toeplitz's lemma that 
\begin{equation}
\label{ASCVGHL}
\lim_{n \rightarrow \infty} \frac{H_{n+1}}{P_{n}} = \vta 
\hspace{1cm}\text{and}\hspace{1cm}
\lim_{n \rightarrow \infty} \frac{L_{n+1}}{P_{n}} = \vta 
\hspace{1cm}\text{a.s.}
\end{equation}
Finally, we find from \eqref{DECMART}, \eqref{ASCVGMN} and \eqref{ASCVGHL} that
\begin{equation}
\label{ASCVGVAR}
\lim_{n \rightarrow \infty}  \vtnh = \vta 
\hspace{1cm}\text{and}\hspace{1cm}
\lim_{n \rightarrow \infty}  \vtnt = \vta 
\hspace{1cm}\text{a.s.}
\end{equation}
which completes the proof of Theorem \ref{T-ASCVGSQ}.
\demend

\subsection{A keystone lemma.}
The QSL as well as the LIL for our estimates require the sharp asymptotic behavior of the
sequence $(P_n)$ defined in \eqref{DEFPN}. Surprisingly, to the best of our knowledge, the following keystone lemma is new. It involves
the famous Euler-Riemann zeta function.

%%%%%%%%%%%%%%%%%%%%%%%%%%%%%%%%%%%%%%%%%%%%%%%%%%%%%%%%%%%%%%%%%%%%%%%%%%%%%%%%%%%%%%%%%%%%%%%%%%

\begin{lem}
\label{L-CVGPN}
Assume that for some $0<b_1<1$,
\begin{equation}
\label{DEFPN1}
P_n=\prod_{k=1}^n \Bigl( 1 - \frac{b_1}{k} \Bigr)^{-1}.
\end{equation}
Then, we have
\begin{equation}
\label{CVGPNGAMMA}
\lim_{n \rightarrow \infty} \frac{1}{n^{b_1}}P_n= \Gamma(1-b_1)
\end{equation}
where $\Gamma$ stands for the Euler gamma function.
Moreover, suppose that
\begin{equation}
\label{DEFPNLESS1}
P_n=\prod_{k=1}^n \Bigl( 1 - \frac{b_1}{k^b} \Bigr)^{-1}
\end{equation}
where $1/2<b<1$. Then, we have 
\begin{equation}
\label{CVGPNZETA}
\lim_{n \rightarrow \infty} \frac{1}{\exp(cn^{1-b})}P_n= \exp(\Lambda)
\end{equation}
with $c=b_1/(1-b)$ and the limiting value
$$
\Lambda=\sum_{n=2}^\infty \frac{b_1^n}{n} \zeta(b n)
$$
where $\zeta$ stands for the Riemann zeta function.
\end{lem}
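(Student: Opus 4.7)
The two parts are handled by different techniques.

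Part 1 admits a closed form. Telescoping via $\Gamma(z+1)=z\Gamma(z)$ yields
$$P_n = \prod_{k=1}^n \frac{k}{k-b_1} = \frac{n!\,\Gamma(1-b_1)}{\Gamma(n+1-b_1)},$$
and the conclusion $n^{-b_1}P_n \to \Gamma(1-b_1)$ follows immediately from the classical ratio asymptotic $\Gamma(n+1)/\Gamma(n+1-b_1)\sim n^{b_1}$ (a one-line consequence of Stirling). This part is essentially algebraic.

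For Part 2, no such closed form is available, so the natural route is via logarithms. Expanding $-\log(1-x)=\sum_{j\geq 1}x^j/j$ and interchanging the two positive sums,
$$\log P_n \;=\; -\sum_{k=1}^n \log\Bigl(1 - \frac{b_1}{k^b}\Bigr) \;=\; \sum_{j=1}^\infty \frac{b_1^j}{j}\,S_n(bj), \qquad S_n(s):=\sum_{k=1}^n k^{-s}.$$
Only the $j=1$ term is divergent: the standard partial-sum expansion $S_n(b)=\frac{n^{1-b}}{1-b}+\zeta(b)+o(1)$ (with $\zeta(b)$ the analytic continuation since $0<b<1$) isolates $cn^{1-b}$ plus a finite constant. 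For each $j\geq 2$, $bj>1$ gives $S_n(bj)\to \zeta(bj)$.

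The delicate step is to interchange the $n\to\infty$ limit with the outer sum over $j\geq 2$. I would use the uniform tail bound
$$0 \;\leq\; \zeta(bj) - S_n(bj) \;\leq\; \int_n^\infty x^{-bj}\,dx \;=\; \frac{n^{1-bj}}{bj-1} \;\leq\; \frac{n^{1-2b}}{2b-1} \qquad (j\geq 2),$$
so that
$$\Bigl|\sum_{j=2}^\infty \frac{b_1^j}{j}\bigl(S_n(bj)-\zeta(bj)\bigr)\Bigr| \;\leq\; \frac{n^{1-2b}}{2b-1}\sum_{j=2}^\infty \frac{b_1^j}{j},$$
which tends to $0$ as $n\to\infty$ precisely because the hypothesis $b>1/2$ forces $n^{1-2b}\to 0$. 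Assembling the $j=1$ and $j\geq 2$ contributions then yields $\log P_n - cn^{1-b} \to \Lambda$. This uniform control of the $j\geq 2$ block is the only real obstacle; once it is in place, the constant is pure bookkeeping, and the role of the restriction $b>1/2$ (ensuring both the exponent $1-2b$ is negative and the series $\sum_{j\geq 2} b_1^j \zeta(bj)/j$ converges) becomes transparent.
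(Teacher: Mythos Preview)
Your approach coincides with the paper's: the Gamma-function identity plus the Stirling ratio for Part~1, and for Part~2 the expansion $\log P_n = \sum_{j\geq 1}(b_1^j/j)\,S_n(bj)$ followed by termwise limits. Your explicit uniform tail bound justifying the interchange of $n\to\infty$ with the sum over $j\geq 2$ is a piece of rigor the paper's proof omits.

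One caveat on the bookkeeping you call ``pure'': your sharper $j=1$ asymptotic $S_n(b)=\tfrac{n^{1-b}}{1-b}+\zeta(b)+o(1)$ contributes an additional constant $b_1\zeta(b)$ (analytic continuation, since $0<b<1$), so the argument you have written actually yields $\log P_n - cn^{1-b}\to \Lambda + b_1\zeta(b)$ rather than the $\Lambda$ in the lemma. The paper's proof avoids this only by invoking the cruder $S_n(b)\sim n^{1-b}/(1-b)$, which is not fine enough to determine the constant at all; your treatment is the correct one, and it reveals that the stated $\Lambda$ is missing the $b_1\zeta(b)$ term.
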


\begin{rem}
The link between the first case $b=1$ and the second case $1/2<b<1$ is given the following formula due to Euler.
For all $|x|<1$,
$$
\log \Gamma(1-x)=\gamma x + \sum_{n=2}^\infty \frac{x^n}{n} \zeta(n)
$$
where $\gamma$ is the Euler-Mascheroni constant.
\end{rem}

\begin{rem}
The case $b_1 \geq 1$ can be treated in the same way. For example, concerning the first part of
Lemma \ref{L-CVGPN}, it is only necessary to replace $P_n$ defined in \eqref{DEFPN1} by
$$
P_n=\prod_{k=1+ \lfloor b_1 \rfloor}^n \Bigl( 1 - \frac{b_1}{k} \Bigr)^{-1}
$$
where $\lfloor b_1 \rfloor$ is the integer part of $b_1$. Then, we obtain that
\begin{equation*}
\lim_{n \rightarrow \infty} \frac{1}{n^{b_1}}P_n= \frac{\Gamma(1-\{b_1\})}{\Gamma(1+ \lfloor b_1 \rfloor)}
\end{equation*}
where $\{b_1\}=b_1-\lfloor b_1 \rfloor$ stands for the fractional part of $b_1$.
\end{rem}

\begin{proof}
In the first case $b=1$, we clearly have
\begin{equation}
\label{PGAMMA}
P_n=\prod_{k=1}^n \Bigl( 1 - \frac{b_1}{k} \Bigr)^{-1}= \frac{\Gamma(n+1) \Gamma(1-b_1)}{\Gamma(n+1-b_1)}.
\end{equation}
It is well-known that for any $c>0$,
\begin{equation}
\label{CVGAMMA}
\lim_{n \rightarrow \infty} \frac{\Gamma(n+c)}{\Gamma(n) n^c}= 1.
\end{equation}
Hence, we obtain from \eqref{PGAMMA} and \eqref{CVGAMMA} that
\begin{equation}
\label{CVGPN}
\lim_{n \rightarrow \infty} \frac{1}{n^{b_1}}P_n= \Gamma(1-b_1).
\end{equation}
The second case $1/2<b<1$ is much more difficult to handle.
It follows from the Taylor expansion of the natural logarithm
$$
\log(1-x)=-\sum_{\ell=1}^\infty \frac{x^\ell}{\ell}
$$
that
\begin{eqnarray}
\log(P_n) &=&-\sum_{k=1}^n\log\Bigl(1- \frac{b_1}{k^b}\Bigr)=\sum_{k=1}^n \sum_{\ell=1}^\infty \frac{1}{\ell} \Bigl(\frac{b_1}{k^b}\Bigr)^\ell
= \sum_{\ell=1}^\infty \sum_{k=1}^n \frac{1}{\ell} \Bigl(\frac{b_1}{k^b}\Bigr)^\ell,  \notag \\
&=& b_1 \sum_{k=1}^n \frac{1}{k^b} + \sum_{\ell=2}^\infty \frac{b_1^\ell}{\ell}\sum_{k=1}^n \frac{1}{k^{b\ell}}.
\label{LOGPN}
\end{eqnarray}
It is well-known that
$$
\lim_{n \rightarrow \infty}\frac{1}{n^{1-b}}\sum_{k=1}^n \frac{1}{k^b}=\frac{1}{1-b}.
$$
In addition, as $b>1/2$, we always have for all $\ell \geq 2$, $b\ell >1$. Consequently,
$$
\lim_{n \rightarrow \infty} \sum_{k=1}^n \frac{1}{k^{b\ell}} =\zeta(b \ell)
$$
where $\zeta$ is the Riemann zeta function. Therefore, we obtain from \eqref{LOGPN} that
\begin{equation}
\label{CVGPNLESS1}
\lim_{n \rightarrow \infty} \frac{1}{\exp(cn^{1-b})}P_n= \exp(\Lambda)
\end{equation}
where $c=b_1/(1-b)$ and the limiting value
$$
\Lambda=\sum_{\ell=2}^\infty \frac{b_1^\ell}{\ell} \zeta(b \ell).
$$
\end{proof}

\subsection{The fast step size case.}
The proof of Theorem  \ref{T-LILQSEQUAL1} relies on the following lemma which provides the QSL and the LIL
for the martingales $(\cM_n)$ and $(N_n)$.

%%%%%%%%%%%%%%%%%%%%%%%%%%%%%%%%%%%%%%%%%%%%%%%%%%%%%%%%%%%%%%%%%%%%%%%%%%%%%%%%%%%%%%%%%%%%%%%%%%

\begin{lem}
\label{L-MART1}
Assume that the step sequences $(a_n)$ and $(b_n)$ are given by
\begin{equation*}
a_n=\frac{a_1}{n^a}
\hspace{1.5cm}\text{and}\hspace{1.5cm}
b_n=\frac{b_1}{n}
\end{equation*}
where $a_1>0$,  $b_1>1/2$ and $1/2<a<1$. Then, $(\cM_n)$ and $(N_n)$ share the same QSL
\begin{equation}
\label{QSLMARTM1}
\lim_{n \rightarrow \infty} \frac{1}{\log n} \sum_{k=1}^n \Bigl(  \frac{\cM_k}{P_{k-1}} \Bigr)^2= \Bigl(\frac{b_1^2}{2b_1-1} \Bigr)\tau^2_{\alpha}(\theta_\alpha)
\hspace{1cm}\text{a.s.}
\end{equation}
%\begin{equation}
%\label{QSLMARTN1}
%\lim_{n \rightarrow \infty} \frac{1}{\log n} \sum_{k=1}^n \Bigl(  \frac{N_k}{P_{k-1}} \Bigr)^2= \Bigl(\frac{b_1^2}{2b_1-1} \Bigr)\tau^2_{\alpha}(\theta_\alpha)
%\hspace{1cm}\text{a.s.}
%\end{equation}
In addition, they also share the same LIL
\begin{eqnarray}  
\limsup_{n \rightarrow \infty} \left(\frac{n}{2  \log \log n} \right)^{1/2}
 \!\!\!\Bigl(\frac{\cM_n}{P_{n-1}}\Bigr)
&=& -  \liminf_{n \rightarrow \infty} \left(\frac{n}{2  \log \log n} \right)^{1/2}
 \!\!\!\Bigl(\frac{\cM_n}{P_{n-1}}\Bigr)  \notag \\
&=& \left(  \frac{b_1^2}{2b_1 -1}  \right)^{1/2} \tau_{\alpha}(\theta_\alpha)
\hspace{1cm}\text{a.s.}
\label{LILMARTN1}
\end{eqnarray}
%\begin{eqnarray}  
%\limsup_{n \rightarrow \infty} \left(\frac{n}{2  \log \log n} \right)^{1/2}
% \!\!\!\Bigl(\frac{N_n}{P_{n-1}}\Bigr)
%&=& - \left(\frac{n}{2  \log \log n} \right)^{1/2}
% \!\!\!\Bigl(\frac{N_n}{P_{n-1}}\Bigr)  \notag \\
%&=& \left(  \frac{b_1^2}{2b_1 -1}  \right)^{1/2} \tau_{\alpha}(\theta_\alpha)
%\hspace{1cm}\text{a.s.}
%\label{LILMARTN1}
%\end{eqnarray}
%Moreover, $(N_n)$ shares exactly the same quadratic strong law and the same law of the iterated logarithm than $(\cM_n)$.
\end{lem}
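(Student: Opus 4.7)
The plan is to apply the standard LIL and QSL for square-integrable martingales to $(N_n)$ and $(\cM_n)$, after identifying the asymptotic behaviour of their predictable quadratic variations. The key point is that the tailor-made choice of $\delta_{n+1}$ in \eqref{DEFDELTAN} forces $(W_{n+1})$ to be asymptotically equivalent to $(\xi_{n+1})$, so both martingales carry the same asymptotic variance $\tau^2_\alpha(\ta)$.

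First, since $b_n=b_1/n$, Lemma \ref{L-CVGPN} (together with the remark that handles $b_1\geq 1$) yields a constant $C_{b_1}>0$ such that $P_n\sim C_{b_1} n^{b_1}$. Using the continuity of $\theta\mapsto \tau^2_\alpha(\theta)$ and the convergence $\tn\to\ta$ a.s., one has $\tau^2_\alpha(\tn)\to\tau^2_\alpha(\ta)$ a.s. A Ces\`aro-type argument then gives
$$
\langle N \rangle_n = \sum_{k=1}^{n-1} b_k^2 P_k^2 \tau^2_\alpha(\theta_k) \sim \tau^2_\alpha(\ta) b_1^2 C_{b_1}^2 \sum_{k=1}^{n-1} k^{2b_1-2} \sim \frac{b_1^2 C_{b_1}^2 \tau^2_\alpha(\ta)}{2b_1-1} n^{2b_1-1},
$$
where the summability uses precisely $b_1>1/2$.

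For the martingale $(\cM_n)$, a direct algebraic computation based on the identity $Y_{k+1}-Z_{k+1} = -\tfrac{\tn}{1-\alpha}(\rI_{\{X_{k+1}\leq \tn\}}-\alpha)$ yields
$$
\veps_{k+1} = \xi_{k+1} - \frac{\tn}{1-\alpha} V_{k+1},
$$
so that $W_{k+1}=\xi_{k+1}+c_k V_{k+1}$ with $c_k := \tfrac{a_k}{b_k}\delta_{k+1}-\tfrac{\tn}{1-\alpha}$. Substituting \eqref{DEFDELTAN} and $a_k=a_1/k^a$, $b_k=b_1/k$ with $a<1$, one finds that $\tfrac{a_k}{b_k}\delta_{k+1}\to \tfrac{C_\alpha}{f(\ta)}=\tfrac{\ta}{1-\alpha}$, hence $c_k\to 0$ a.s. Since both conditional moments $\dE[\xi_{k+1}V_{k+1}|\cF_k]$ and $\dE[V_{k+1}^2|\cF_k]$ remain bounded, this yields $\dE[W_{k+1}^2|\cF_k]\to \tau^2_\alpha(\ta)$ a.s., and consequently $\langle \cM\rangle_n$ shares the same leading-order behaviour as $\langle N\rangle_n$.

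The assumption that $X$ admits a moment of order strictly greater than $2$ supplies a Lindeberg-type condition on both families of martingale increments. The standard martingale LIL (see for instance Theorem 3 of \cite{Pelletier1998} or Stout's theorem) therefore gives, for $M\in\{N,\cM\}$,
$$
\limsup_{n\to\infty} \frac{M_n}{\sqrt{2\langle M\rangle_n \log\log\langle M\rangle_n}}=1 \quad\text{a.s.}
$$
Substituting the asymptotics of $\langle M\rangle_n$ and $P_{n-1}$ and noting that $\log\log\langle M\rangle_n\sim\log\log n$, one recovers \eqref{LILMARTN1} after elementary simplification; the analogous quadratic strong law for martingales yields \eqref{QSLMARTM1}. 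The main obstacle is precisely the algebraic cancellation $W_{k+1}\approx \xi_{k+1}$: it is the choice of $\delta_{n+1}$ in \eqref{DEFDELTAN} that aligns the asymptotic conditional variance of $(\cM_n)$ with that of $(N_n)$, which is what ultimately allows both algorithms to share a common limiting behaviour.
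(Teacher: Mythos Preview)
Your argument is correct and follows the same overall strategy as the paper: identify the asymptotic behaviour of the predictable quadratic variation via $P_n\sim C_{b_1}n^{b_1}$, then invoke the standard martingale QSL (Theorem~3 of \cite{bercu2004}) and Stout's LIL. The one genuine difference lies in how you compute the limiting conditional variance of $W_{k+1}$. The paper writes out
\[
\dE[W_{k+1}^2\mid\cF_k]=\sigma_\alpha^2(\theta_k)+\Bigl(\frac{a_k\delta_{k+1}}{b_k}\Bigr)^2 F(\theta_k)(1-F(\theta_k))-\frac{2a_k\delta_{k+1}}{b_k}F(\theta_k)H_\alpha(\theta_k)
\]
and then checks algebraically that the limit equals $\tau_\alpha^2(\ta)$. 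Your route is cleaner: the identity $\veps_{k+1}=\xi_{k+1}-\tfrac{\theta_k}{1-\alpha}V_{k+1}$ gives $W_{k+1}=\xi_{k+1}+c_kV_{k+1}$ with $c_k\to 0$, which makes it transparent \emph{why} the choice \eqref{DEFDELTAN} forces $(\cM_n)$ and $(N_n)$ to share the variance $\tau_\alpha^2(\ta)$ rather than $\sigma_\alpha^2(\ta)$. Two small points worth tightening: the index in your displayed identity should be $\theta_k$ rather than $\tn$, and for the QSL/LIL you should record explicitly that the explosion coefficient $f_n=(\langle\cM\rangle_n-\langle\cM\rangle_{n-1})/\langle\cM\rangle_n$ satisfies $nf_n\to 2b_1-1$, which together with the moment of order $p>2$ gives $\sum f_n^{p/2}<\infty$ and $\sup_n\dE[|W_{n+1}|^p\mid\cF_n]<\infty$---these are the actual hypotheses of the cited martingale theorems, not a Lindeberg condition per se.
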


\begin{proof}
We first focus our attention on the martingale $(\cM_n)$ defined by
$$
\cM_{n+1}=\sum_{k=1}^{n} b_k P_k W_{k+1}
$$
where
$$
W_{n+1}=\veps_{n+1} +\frac{a_n}{b_n}\delta_{n+1}V_{n+1}
$$
with $V_{n+1}=\rI_{\{X_{n+1} \leq \tn \}} - F(\theta_n)$. We clearly have $\dE[V_{n+1} | \cF_n]=0$, $\dE[W_{n+1} | \cF_n]=0$,
and $\dE[V_{n+1}^2 | \cF_n]=F(\theta_n)(1-F(\theta_n))$,
$\dE[W_{n+1}^2 | \cF_n]=\tau_n^2(\tn)$ where for all $\theta \in \dR$,
\begin{equation}
\label{DEFTAUN}
\tau_n^2(\theta)=\sigma_\alpha^2(\theta)+\Bigl(\frac{a_n \delta_{n+1}}{b_n}\Bigr)^2 F(\theta)(1-F(\theta)) -\Bigl(\frac{2 a_n \delta_{n+1} }{b_n}\Bigr)F(\theta)H_\alpha(\theta).
\end{equation}
We obtain from \eqref{DEFDELTAN} that 
\begin{equation}
\label{CVGRATIO}
\lim_{n \rightarrow \infty}\frac{a_n \delta_{n+1}}{b_n}=\frac{\ta}{1-\alpha}.
\end{equation}
Consequently, we infer from \eqref{ASCVGRM}, \eqref{DEFTAUN} and \eqref{CVGRATIO} that
\begin{equation}
\label{ASCVGNU}
\lim_{n \rightarrow \infty} \tau_n^2(\tn) = \sigma_\alpha^2(\ta) - \Bigl(\frac{\alpha\ta}{1-\alpha}\Bigr)(2\vta-\ta)=\tau_\alpha^2(\ta) 
\hspace{1cm}\text{a.s}
\end{equation}
Hereafter, assume for the sake of simplicity that $1/2<b_1<1$ inasmuch as the proof follows exactly the same lines for $b_1 \geq 1$.
On the one hand, the predictable quadratic variation of $(\cM_n)$ is given by
$$
\ccMc_n = \sum_{k=1}^{n-1} b_k^2 P_k^2 \tau^2_k(\theta_k).
$$
On the other hand, as $b_1\!>\!1/2$, we obtain from convergence \eqref{CVGPNGAMMA} in Lemma \ref{L-CVGPN} that
\begin{equation*}
\lim_{n \rightarrow \infty} \frac{1}{n^{2b_1-1}}\sum_{k=1}^n b_k^2 P_k^2 = \Bigl(\frac{b_1^2}{2b_1-1} \Bigr) \Gamma^2(1-b_1).
\end{equation*}
Then, we deduce from \eqref{ASCVGNU} and Toeplitz's lemma that
\begin{equation}
\label{CVGIPCMN}
\lim_{n \rightarrow \infty} \frac{1}{n^{2b_1-1}} \ccMc_n = \Bigl(\frac{b_1^2}{2b_1-1} \Bigr) \Gamma^2(1-b_1) \tau^2_\alpha(\ta)
\hspace{1cm}\text{a.s.}
\end{equation}
Denote by $f_n$ the explosion coefficient associated with the martingale $(\cM_n)$,
$$
f_n = \frac{\ccMc_n - \langle \cM \rangle_{n-1}}{\ccMc_n}.
$$
We obtain from \eqref{CVGIPCMN} that
\begin{equation}
\label{CVGFN}
\lim_{n \rightarrow \infty} nf_n=2b_1-1 \hspace{1cm}\text{a.s.}
\end{equation}
It means that $f_n$ converges to zero almost surely at rate $n$. In addition, we already saw from \eqref{ASCVGNU} that
$$
\lim_{n \rightarrow \infty}  \dE[ W_{n+1}^2 | \cF_n] = \tau^2_\alpha(\ta)
\hspace{1cm}\text{a.s.}
$$
Furthermore, the random variable $X$ has a moment of order $>2$. It implies that for some real number $p>2$,
$$
\sup_{n \geq 0} \dE[ | W_{n+1} |^p | \cF_n] < \infty \hspace{1cm}\text{a.s.}
$$
Consequently, we deduce from the QSL for martingales given in theorem 3 of \cite{bercu2004} that
\begin{equation}
\label{QSLCMN1}
\lim_{n \rightarrow \infty} \frac{1}{\log \ccMc_n}   \sum_{k=1}^n f_k  \frac{\cM_k^2}{\langle \cM \rangle _k} 
 = 1
\hspace{1cm}\text{a.s.}
\end{equation}
Hence, we obtain from the conjunction of \eqref{CVGIPCMN} and \eqref{QSLCMN1} that
\begin{equation}
\label{QSLCMN2}
\lim_{n \rightarrow \infty} \frac{1}{\log n}   \sum_{k=1}^n   \frac{\cM_k^2}{P_{k-1}^2 }
 = \Bigl(\frac{b_1^2}{2b_1-1} \Bigr)\tau^2_{\alpha}(\theta_\alpha)
\hspace{1cm}\text{a.s.}
\end{equation}
We shall now proceed to the proof of the LIL given 
by \eqref{LILMARTN1}. We find from \eqref{CVGFN} that the explosion coefficient $f_n$ satisfies
$$
\sum_{n=1}^\infty f_n^{p/2} < +\infty
\hspace{1cm}\text{a.s.}
$$
Therefore, we deduce from the LIL for martingales \cite{Stout1970}, see also corollary 6.4.25 in \cite{Duflo1997} that
\begin{eqnarray}   
\limsup_{n \rightarrow \infty} \left(\frac{1}{2 \ccMc_n \log \log \ccMc_n} \right)^{1/2}
 \!\!\!\cM_n
&=& - \liminf_{n \rightarrow \infty}
\left(\frac{1}{2 \ccMc_n \log \log \ccMc_n} \right)^{1/2}
 \!\!\!\cM_n  \notag \\
&=& 1
\hspace{1cm}\text{a.s.}
\label{LILCMN1}
\end{eqnarray}
Hence, it follows from the conjunction of \eqref{CVGPN}, \eqref{CVGIPCMN} and \eqref{LILCMN1} that
\begin{eqnarray*}   
\limsup_{n \rightarrow \infty} \left(\frac{n}{2  \log \log n} \right)^{1/2}
 \!\!\!\Bigl(\frac{\cM_n}{P_{n-1}}\Bigr)
&=& - \liminf_{n \rightarrow \infty}\left(\frac{n}{2  \log \log n} \right)^{1/2}
 \!\!\!\Bigl(\frac{\cM_n}{P_{n-1}}\Bigr)  \notag \\
&=& \left(  \frac{b_1^2}{2b_1 -1}  \right)^{1/2} \tau_{\alpha}(\theta_\alpha)
\hspace{1cm}\text{a.s.}
%\label{LILCMN2}
\end{eqnarray*}
which is exactly what we wanted to prove.
Finally, concerning the martingale $(N_n)$ given by
$$
N_{n+1}=\sum_{k=1}^{n} b_k P_k \xi_{k+1},
$$
the only minor change is that $\dE[\xi_{n+1}^2 | \cF_n]= \tau_\alpha^2(\tn)$. However, we already saw that
$\tau_\alpha^2(\tn) \longrightarrow \tau_\alpha^2(\ta)$ a.s. Consequently, $(\cM_n)$ and $(N_n)$ share 
the same QSL and the same LIL, which completes the proof 
of Lemma \ref{L-MART1}.
\end{proof}

\noindent{\bf Proof of Theorem \ref{T-LILQSEQUAL1}.}
%%%%%%%%%%%%%%%%%%%%%%%%%%%%%%%%%%%%%%%%%%%%%%%%%%%%%%%%%%%%%%%%%%%%%%%%%%%%%%%%%%%%%%%%%%%%%%%%%%
We shall only prove Theorem \ref{T-LILQSEQUAL1} in the special case where $b_n=b_1/n$ with $1/2<b_1<1$ inasmuch as the proof
in the case $b_1 \geq 1$ follows essentially the same lines. First of all, we focus our attention on the standard estimator
$\vtnh$.

$\bullet$
Our strategy is first to establish the QSL for the sequence $(\Delta_n)$ given by \eqref{DIFDELTA} and then to come back to
$\vtnh$. We recall from \eqref{DECALGON3} that for all $n \geq 2$,
\begin{equation*}
\Delta_{n+1} =  \frac{1}{P_{n}}\Bigl( \cM_{n+1} +\cH_{n+1} +\cR_{n+1}\Bigr).
\end{equation*}
We claim that the weighted sequence $(\Delta_n)$ satisfies the QSL
\begin{equation}
\label{QSLDELTAN}
\lim_{n \rightarrow \infty} \frac{1}{\log n} \sum_{k=1}^n \Delta^2_k= \Bigl(\frac{b_1^2}{2b_1-1} \Bigr)\tau^2_{\alpha}(\theta_\alpha)
\hspace{1cm}\text{a.s.}
\end{equation}
As a matter of fact, we already saw from \eqref{QSLMARTM1} that 
\begin{equation*}
\lim_{n \rightarrow \infty} \frac{1}{\log n} \sum_{k=1}^n \Bigl(  \frac{\cM_k}{P_{k-1}} \Bigr)^2= \Bigl(\frac{b_1^2}{2b_1-1} \Bigr)\tau^2_{\alpha}(\theta_\alpha)
\hspace{1cm}\text{a.s.}
\end{equation*}
Hence, in order to prove \eqref{QSLDELTAN}, it is necessary to show that
\begin{equation}
\label{PRQSLHRN1}
\lim_{n \rightarrow \infty} \frac{1}{\log n} \sum_{k=1}^n \Bigl(  \frac{\cH_k}{P_{k-1}} \Bigr)^2 =0
\hspace{0.5cm}\text{and}\hspace{0.5cm}
\lim_{n \rightarrow \infty} \frac{1}{\log n} \sum_{k=1}^n \Bigl(  \frac{\cR_k}{P_{k-1}} \Bigr)^2 =0
\hspace{1cm}\text{a.s.}
\end{equation}
On the one hand, it follows from \eqref{LILSUPRMStepa} that for $n$ large enough and for all $k\geq n$,
\begin{equation}
\label{MAJLILStepa}
\bigl( \theta_k - \ta \bigr)^2 \leq 2 D_a \Bigl(\frac{\log k}{k^a}\Bigr)
\hspace{1cm}\text{a.s.}
\end{equation}
where 
$$
D_a= \frac{2a_1(1-a)\alpha(1-\alpha)}{f(\ta)}.
$$
Consequently, we obtain from \eqref{DEFMARTNEW} and \eqref{MAJLILStepa} that
\begin{equation}
\label{PRCVGHN1}
| \cH_{n+1} | = O \left( \sum_{k=1}^n \frac{b_kP_k | \nu_{k+1}| \sqrt{\log k}}{k^{a/2}} \right)
\hspace{1cm}\text{a.s.}
\end{equation}
Furthermore, one can easily check from \eqref{DEFDELTAN} and \eqref{DEFNUNN} that
$$
\lim_{n \rightarrow \infty}  n^{1-a} \nu_{n+1}= \frac{(1-a)C_\alpha}{a_1f(\ta)}.
$$
In addition, we also recall from convergence \eqref{CVGPNGAMMA} in Lemma \ref{L-CVGPN} that
$$
\lim_{n \rightarrow \infty} \frac{1}{n^{b_1}}P_n= \Gamma(1-b_1).
$$
Hence, we deduce from \eqref{PRCVGHN1} that
\begin{equation}
\label{PRCVGHN2}
| \cH_{n+1} | = O \left( \sum_{k=1}^n \frac{\sqrt{\log k}}{k^{2-b_1-a/2}} \right)
\hspace{1cm}\text{a.s.}
\end{equation}
It follows from \eqref{PRCVGHN2} that
\begin{equation}
\label{PRCVGHN3}
\sum_{n=1}^\infty \Bigl(\frac{\cH_n}{P_{n-1}} \Bigr)^2 <+\infty
\hspace{1cm}\text{a.s.}
\end{equation}
As a matter of fact, let $d=2 - b_1 - a/2$. If $d>1$ that is $b_1<1-a/2$, we obtain from \eqref{PRCVGHN2} 
that $| \cH_{n} | = O\bigl( 1 \bigr)$ a.s. Consequently, as $b_1>1/2$, \eqref{PRCVGHN3} holds true. In addition,
if $d=1$ that is $b_1=1-a/2$, we deduce from \eqref{PRCVGHN2} that $| \cH_{n} | = O\bigl( (\log n)^{3/2} \bigr)$ a.s.
which implies that 
\begin{equation*}
\sum_{n=1}^\infty \Bigl(\frac{\cH_n}{P_{n-1}} \Bigr)^2 =  O \left( \sum_{n=1}^\infty \frac{(\log n)^3}{n^{2b_1}} \right)=O(1)
\hspace{1cm}\text{a.s.}
\end{equation*}
Moreover, if $d<1$ that is $b_1>1-a/2$, we get from \eqref{PRCVGHN2} that $| \cH_{n} | = O\bigl( (\log n)^{1/2}n^{1-d} \bigr)$ a.s.
leading to
\begin{equation*}
\sum_{n=1}^\infty \Bigl(\frac{\cH_n}{P_{n-1}} \Bigr)^2 =  O \left( \sum_{n=1}^\infty \frac{(\log n)}{n^{2b_1+d-1}} \right)=
O \left( \sum_{n=1}^\infty \frac{(\log n)}{n^{2-a}} \right)=O(1)
\hspace{1cm}\text{a.s.}
\end{equation*}
On the other hand, \eqref{DEFRNEW} together with \eqref{TAYLORH} and \eqref{DEFREMAINDERS} imply that
\begin{equation}
\label{PRCVGRN1}
| \cR_{n+1} | = O \left( \sum_{k=1}^n b_kP_k\bigl(\theta_k - \ta\bigr)^2 \right)
\hspace{1cm}\text{a.s.}
\end{equation}
It follows from \eqref{MAJLILStepa} and \eqref{PRCVGRN1} that
\begin{equation}
\label{PRCVGRN2}
| \cR_{n+1} | = O \left( \sum_{k=1}^n \frac{\log k}{k^{1+a-b_1}} \right)
\hspace{1cm}\text{a.s.}
\end{equation}
which clearly leads to
\begin{equation}
\label{PRCVGRN3}
\sum_{n=1}^\infty \Bigl(\frac{\cR_n}{P_{n-1}} \Bigr)^2 <+\infty
\hspace{1cm}\text{a.s.}
\end{equation}
Therefore, we obtain from \eqref{PRCVGHN3} and \eqref{PRCVGRN3} that the two convergences in
\eqref{PRQSLHRN1} hold true, which immediately implies \eqref{QSLDELTAN}. Hereafter, one can notice 
from \eqref{DIFDELTA} that
\begin{equation}
\label{PRQSLFIN}
\sum_{k=1}^n \bigl(  \vtkh - \vta \bigr)^2=\sum_{k=1}^n \Delta_k^2 +
\sum_{k=1}^n \delta_k^2 \bigl(  \theta_k - \ta \bigr)^2 +  2\sum_{k=1}^n \delta_k \Delta_k \bigl(  \theta_k - \ta \bigr). 
\end{equation}
Hence, in order to prove \eqref{QSL1}, it is only necessary to show that
\begin{equation*}
\label{PRQSLFIN1}
\lim_{n \rightarrow \infty} \frac{1}{\log n} \sum_{k=1}^n \delta_k^2 \bigl(  \theta_k - \ta \bigr)^2=0
\hspace{1cm}\text{a.s.}
\end{equation*}
and to make use of the Cauchy-Schwarz inequality. Denote
$$
\Lambda_n=\sum_{k=1}^n \bigl( \theta_k - \ta  \bigr)^2.
$$
We have from \eqref{QSLRMStepa} that as soon as $f(\ta)>0$,
\begin{equation}
\label{PRQSLFIN2}
\lim_{n \rightarrow \infty} \frac{1}{n^{1-a}} \Lambda_n= \frac{a_1\alpha(1-\alpha)}{2(1-a)f(\ta)}
\hspace{1cm}\text{a.s.}
\end{equation}
Furthermore, we obtain from a simple Abel transform that
\begin{equation}
\label{ABELDELTAN}
\sum_{k=1}^{n}  \delta_k^2 \bigl( \theta_k - \ta  \bigr)^2=\delta_n^2 \Lambda_n + \sum_{k=1}^{n-1}(\delta_k^2 - \delta_{k+1}^2 )\Lambda_k.
\end{equation}
We obtain from \eqref{DEFDELTAN} that
\begin{equation}
\label{CVGdelta}
\lim_{n \rightarrow \infty} n^{1-a} \delta_n= \frac{b_1 C_\alpha}{a_1f(\ta)}.
\end{equation}
Then, we deduce from \eqref{PRQSLFIN2} that
$$
\lim_{n \rightarrow \infty}  n^{1-a} \delta_n^2 \Lambda_n= \frac{b_1^2 C_\alpha^2\alpha(1-\alpha)}{2 a_1(1-a)f^3(\ta)}
\hspace{1cm}\text{a.s.}
$$
which implies that
\begin{equation}
\label{PRQSLFIN3}
\lim_{n \rightarrow \infty}  \frac{1}{\log n} \delta_n^2 \Lambda_n=0
\hspace{1cm}\text{a.s.}
\end{equation}
In addition, we also have from \eqref{DEFDELTAN} that
$$
\lim_{n \rightarrow \infty} n^{3-2a} \bigl( \delta_n^2 - \delta_{n+1}^2 \bigr)= 2(1-a) \Bigl(\frac{b_1 C_\alpha}{a_1 f(\ta)}\Bigr)^2.
$$
It clearly ensures via \eqref{PRQSLFIN2} that
\begin{equation}
\label{PRQSLFIN4}
\lim_{n \rightarrow \infty}  \frac{1}{\log n} \sum_{k=1}^{n-1}(\delta_k^2 - \delta_{k+1}^2 )\Lambda_k=0
\hspace{1cm}\text{a.s.}
\end{equation}
Then, it follows from \eqref{ABELDELTAN} together with \eqref{PRQSLFIN3} and \eqref{PRQSLFIN4} that
\begin{equation}
\label{PRQSLFIN5}
\lim_{n \rightarrow \infty}  \frac{1}{\log n} \sum_{k=1}^{n}  \delta_k^2 \bigl( \theta_k - \ta  \bigr)^2=0
\hspace{1cm}\text{a.s.}
\end{equation}
Consequently, we obtain from \eqref{QSLDELTAN} together with \eqref{PRQSLFIN}, \eqref{PRQSLFIN5} and the Cauchy-Schwarz inequality that
$(\vtnh)$ satisfies the QSL
\begin{equation*}
\lim_{n \rightarrow \infty} \frac{1}{\log n} \sum_{k=1}^n \bigl(  \vtkh - \vta \bigr)^2= \Bigl(\frac{b_1^2}{2b_1-1} \Bigr)\tau^2_{\alpha}(\theta_\alpha)
\hspace{1cm}\text{a.s.}
\end{equation*}

$\bullet$
The proof of the QSL for the convexified estimator $(\vtnt)$ is much more easier. We infer from \eqref{DECMART}, \eqref{DEFADDT}, \eqref{DECPN} and the identity 
$L_\alpha(\ta) =\vta$ that for all $n \geq 1$,
\begin{eqnarray}
\vtnpt  -\vta & = & \frac{1}{P_{n}}\Bigl( \widetilde{\vartheta}_1+N_{n+1} + L_{n+1} -P_n \vta\Bigr), \nonumber \\
& = & \frac{1}{P_{n}}\Bigl(N_{n+1} + \widetilde{R}_{n+1} \Bigr)
\label{PRQSLT1}
\end{eqnarray}
where
\begin{equation}
\label{RMDT1}
\widetilde{R}_{n+1}=  \widetilde{\vartheta}_1 -\vta + \sum_{k=1}^{n} b_k P_k \bigl(L_\alpha(\theta_k) - L_\alpha(\ta) \bigr).
\end{equation}
It follows from Lemma \ref{L-MART1} that
\begin{equation}
\label{PRQSLT2}
\lim_{n \rightarrow \infty} \frac{1}{\log n} \sum_{k=1}^n \Bigl(  \frac{N_k}{P_{k-1}} \Bigr)^2= \Bigl(\frac{b_1^2}{2b_1-1} \Bigr)\tau^2_{\alpha}(\theta_\alpha)
\hspace{1cm}\text{a.s.}
\end{equation}
Hence, in order to prove \eqref{QSL1}, it is only necessary to show that
\begin{equation}
\label{PRQSLT3}
\lim_{n \rightarrow \infty} \frac{1}{\log n} \sum_{k=1}^n \Bigl(  \frac{\widetilde{R}_k}{P_{k-1}} \Bigr)^2 =0
\hspace{1cm}\text{a.s.}
\end{equation}
We shall prove the stronger result
\begin{equation}
\label{SRMDT}
 \sum_{n=1}^\infty \Bigl(  \frac{\widetilde{R}_k}{P_{k-1}} \Bigr)^2 < + \infty
\hspace{1cm}\text{a.s.}
\end{equation}
We obtain from \eqref{TAYLORL} and \eqref{RMDT1} that for all $n \geq 1$,
\begin{equation}
\label{RMDTT}
\widetilde{R}_{n+1}^2 \leq  2\bigl(\widetilde{\vartheta}_1 - \ta \bigr)^2+M_f
\Bigl( \sum_{k=1}^{n} b_k P_k \bigl( \theta_k - \ta  \bigr)^2 \Bigr)^2
\end{equation}
where
$$
M_f=\frac{||f||_\infty^2}{2(1- \alpha)^2}.
$$
As before, we obtain from a simple Abel transform that
\begin{equation}
\label{ABELT}
\sum_{k=1}^{n} b_k P_k \bigl( \theta_k - \ta  \bigr)^2=b_nP_n \Lambda_n + \sum_{k=1}^{n-1}(b_kP_k - b_{k+1}P_{k+1} )\Lambda_k.
\end{equation}
It is easy to see that
$$
b_nP_n - b_{n+1}P_{n+1}=b_nP_n \Bigl(\frac{1-b_1}{n+1-b_1}\Bigr).
$$
It implies that
$
0<n(b_nP_n - b_{n+1}P_{n+1})<(1-b_1)b_nP_n.
$
Hence, as $1/2<a<1$ and $1/2<b_1<1$, we find from \eqref{CVGPN} and \eqref{PRQSLFIN2} that
\eqref{SRMDT} holds true.
Consequently, we deduce from \eqref{PRQSLT1} together with \eqref{PRQSLT2} and \eqref{PRQSLT3} that
\begin{equation*}
\lim_{n \rightarrow \infty} \frac{1}{\log n} \sum_{k=1}^n \bigl(  \vtkt - \vta \bigr)^2= \Bigl(\frac{b_1^2}{2b_1-1} \Bigr)\tau^2_{\alpha}(\theta_\alpha)
\hspace{1cm}\text{a.s.}
\end{equation*}
which is exactly the QSL given by \eqref{QSL1}. 

$\bullet$
It only remains to establish the LIL for our estimates $(\vtnh)$ and $(\vtnt)$.
We start by proving the LIL for the sequence $(\Delta_n)$. 
We immediately obtain from \eqref{DECALGON3} that 
\begin{equation}
\label{PRLILH1}
\left(\frac{n}{2 \log \log n} \right)^{1/2}\!\!\Delta_n=
\left(\frac{n}{2 \log \log n} \right)^{1/2}\Bigl(\frac{\cM_n +\cH_n+\cR_n }{P_{n-1}}\Bigr).
\end{equation}
We already saw in Lemma \ref{L-MART1} that the martingale $(\cM_n)$ satisfies the LIL given by 
\eqref{LILMARTN1}. In addition, it is easy to see from \eqref{CVGPN}, \eqref{PRCVGHN2} and \eqref{PRCVGRN2} that
$$
\lim_{n \rightarrow \infty} n \Bigl( \frac{\cH_n }{P_{n-1}}\Bigr)^2=0 \hspace{1cm}\text{and}\hspace{1cm}
\lim_{n \rightarrow \infty} n \Bigl( \frac{\cR_n }{P_{n-1}}\Bigr)^2=0 
\hspace{1cm}\text{a.s.}
$$
It clearly implies that
\begin{equation*}
\lim_{n \rightarrow \infty} \left(\frac{n}{2 \log \log n} \right)^{1/2} \Bigl( \frac{\cH_n }{P_{n-1}}\Bigr) =0
\hspace{1cm}\text{a.s.}
\end{equation*}
and
\begin{equation*}
\lim_{n \rightarrow \infty} \left(\frac{n}{2 \log \log n} \right)^{1/2} \!\!\Bigl( \frac{\cR_n }{P_{n-1}}\Bigr) =0
\hspace{1cm}\text{a.s.}
\end{equation*}
Therefore, we deduce from \eqref{LILMARTN1} and \eqref{PRLILH1} that $(\Delta_n)$ satisfies the LIL 
\begin{eqnarray}  
\limsup_{n \rightarrow \infty} \left(\frac{n}{2  \log \log n} \right)^{1/2}
 \!\!\Delta_n
&=& -  \liminf_{n \rightarrow \infty} \left(\frac{n}{2  \log \log n} \right)^{1/2}
 \!\!\Delta_n  \notag \\
&=& \left(  \frac{b_1^2}{2b_1 -1}  \right)^{1/2} \tau_{\alpha}(\theta_\alpha)
\hspace{1cm}\text{a.s.}
\label{LILDELTAN1}
\end{eqnarray}
Hereafter, one can observe from \eqref{DIFDELTA} that
\begin{equation}
\label{PRLILH2}
\left(\frac{n}{2  \log \log n} \right)^{1/2}
\!\!\!\bigl( \vtnh - \vta \bigr)=\left(\frac{n}{2  \log \log n} \right)^{1/2}
 \!\!\!\Delta_n+\left(\frac{n}{2  \log \log n} \right)^{1/2}
 \!\!\!\delta_n \bigl( \tn- \ta \bigr).
\end{equation}
It follows from \eqref{LILRMStepa} and \eqref{CVGdelta} that
$$
\left(\frac{n}{2  \log \log n} \right)
 \!\!\!\ \bigl| \delta_n ( \tn- \ta) \bigr|^2=O\left(\frac{\log n}{n^{1-a} \log \log n} \right)
\hspace{1cm}\text{a.s.}
$$
which clearly leads to
\begin{equation}
\label{PRLILH3}
\lim_{n \rightarrow \infty} \left(\frac{n}{2 \log \log n} \right)^{1/2} \!\!\delta_n \bigl( \tn- \ta \bigr) =0
\hspace{1cm}\text{a.s.}
\end{equation}
Consequently, we obtain \eqref{LIL1}  from \eqref{LILDELTAN1}, \eqref{PRLILH2} and \eqref{PRLILH3}. The proof for the convexified estimator $(\vtnt)$ is
straightforward. We obtain from \eqref{PRQSLT1} that 
\begin{equation}
\label{PRLILT1}
\left(\frac{n}{2 \log \log n} \right)^{1/2}\bigl( \vtnt - \vta \bigr)=
\left(\frac{n}{2 \log \log n} \right)^{1/2}\Bigl(\frac{N_n +\widetilde{R}_n }{P_{n-1}}\Bigr).
\end{equation}
We already saw in Lemma \ref{L-MART1} that the martingale $(N_n)$ satisfies the LIL given by 
\eqref{LILMARTN1}. In addition, it is easy to see from \eqref{CVGPN}, \eqref{RMDTT} and \eqref{ABELT} that
$$
\lim_{n \rightarrow \infty} n \Bigl( \frac{\widetilde{R}_n }{P_{n-1}}\Bigr)^2=0 \hspace{1cm}\text{a.s.}
$$
which clearly implies 
\begin{equation}
\label{PRLILT2}
\lim_{n \rightarrow \infty} \left(\frac{n}{2 \log \log n} \right)^{1/2} \Bigl( \frac{\widetilde{R}_n }{P_{n-1}}\Bigr) =0
\hspace{1cm}\text{a.s.}
\end{equation}
Finally, we deduce \eqref{LIL1} from \eqref{LILMARTN1}, \eqref{PRLILT1} and \eqref{PRLILT2},
which completes the proof of Theorem \ref{T-LILQSEQUAL1}.
\demend
%%%%%%%%%%%%%%%%%%%%%%%%%%%%%%%%%%%%%%%%%%%%%%%%%%%%%%%%%%%%%%%%%%%%%%%%%%%%%%%%%%%%%%%%%%%%%%%%%%

\subsection{The slow step size case.}
In order to prove Theorem \ref{T-LILQSLESS1}, it is necessary to establish the following QSL and LIL
for the martingales $(\cM_n)$ and $(N_n)$.

\begin{lem}
\label{L-MARTLESS1}
Assume that the step sequences $(a_n)$ and $(b_n)$ are given by
\begin{equation*}
a_n=\frac{a_1}{n^a}
\hspace{1.5cm}\text{and}\hspace{1.5cm}
b_n=\frac{b_1}{n^b}
\end{equation*}
where $a_1>0$,  $b_1>0$ and $1/2<a<b<1$. Then, $(\cM_n)$ and $(N_n)$ share the same QSL
\begin{equation}
\label{QSLMARTMLESS1}
\lim_{n \rightarrow \infty} \frac{1}{n^{1-b}} \sum_{k=1}^n \Bigl(  \frac{\cM_k}{P_{k-1}} \Bigr)^2= \Bigl(\frac{b_1}{2(1-b)} \Bigr)\tau^2_{\alpha}(\theta_\alpha)
\hspace{1cm}\text{a.s.}
\end{equation}
%\begin{equation}
%\label{QSLMARTN1}
%\lim_{n \rightarrow \infty} \frac{1}{\log n} \sum_{k=1}^n \Bigl(  \frac{N_k}{P_{k-1}} \Bigr)^2= \Bigl(\frac{b_1^2}{2b_1-1} \Bigr)\tau^2_{\alpha}(\theta_\alpha)
%\hspace{1cm}\text{a.s.}
%\end{equation}
In addition, they also share the same LIL
\begin{eqnarray}  
\limsup_{n \rightarrow \infty} \left(\frac{n^b}{2 (1-b) \log n} \right)^{1/2}
 \!\!\!\Bigl(\frac{\cM_n}{P_{n-1}}\Bigr)
&=& - \liminf_{n \rightarrow \infty} \left(\frac{n^b}{2 (1-b) \log n} \right)^{1/2}
 \!\!\!\Bigl(\frac{\cM_n}{P_{n-1}}\Bigr)  \notag \\
&=& \left(  \frac{b_1}{2}  \right)^{1/2} \tau_{\alpha}(\theta_\alpha)
\hspace{1cm}\text{a.s.}
\label{LILMARTMLESS1}
\end{eqnarray}
%\begin{eqnarray}  
%\limsup_{n \rightarrow \infty} \left(\frac{n}{2  \log \log n} \right)^{1/2}
% \!\!\!\Bigl(\frac{N_n}{P_{n-1}}\Bigr)
%&=& - \left(\frac{n}{2  \log \log n} \right)^{1/2}
% \!\!\!\Bigl(\frac{N_n}{P_{n-1}}\Bigr)  \notag \\
%&=& \left(  \frac{b_1^2}{2b_1 -1}  \right)^{1/2} \tau_{\alpha}(\theta_\alpha)
%\hspace{1cm}\text{a.s.}
%\label{LILMARTN1}
%\end{eqnarray}
%Moreover, $(N_n)$ shares exactly the same quadratic strong law and the same law of the iterated logarithm than $(\cM_n)$.
\end{lem}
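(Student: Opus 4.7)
The plan is to mirror the strategy of Lemma \ref{L-MART1}, retaining all the algebra of Section \ref{S-MA}, but to replace the polynomial asymptotics of $(P_n)$ by the exponential ones provided by \eqref{CVGPNZETA}. The starting point is unchanged: the predictable quadratic variation of $(\cM_n)$ is $\ccMc_n = \sum_{k=1}^{n-1} b_k^2 P_k^2 \tau_k^2(\theta_k)$, where $\tau_n^2(\theta_n)$ is given by \eqref{DEFTAUN}. As in Lemma \ref{L-MART1}, the a.s. convergence \eqref{ASCVGRM}, the limit \eqref{CVGRATIO}, and assumption $(\mathcal{A}_1)$ together imply $\tau_n^2(\tn)\to \tau^2_\alpha(\ta)$ a.s.

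The core technical step, which is the main obstacle, is the asymptotic evaluation of $\sum_{k=1}^{n} b_k^2 P_k^2$ in the exponential regime $P_n\sim \exp(\Lambda)\exp(cn^{1-b})$ with $c=b_1/(1-b)$. My plan is to exploit the identity $P_n-P_{n-1}=b_nP_n$, which upgrades to
\begin{equation*}
P_k^2-P_{k-1}^2=b_kP_k^2(2-b_k),\qquad\text{hence}\qquad b_k^2P_k^2=\frac{b_k}{2-b_k}\bigl(P_k^2-P_{k-1}^2\bigr).
\end{equation*}
An Abel summation, together with the facts that $b_k/(2-b_k)\sim b_k/2$ is decreasing with increment of order $1/k^{1+b}$, while $P_k^2$ grows as $\exp(2ck^{1-b})$, shows that the boundary term dominates and
\begin{equation*}
\sum_{k=1}^{n} b_k^2 P_k^2 \underset{n\to\infty}{\sim} \frac{b_n}{2}\,P_n^2.
\end{equation*}
Combined with $\tau_n^2(\tn)\to\tau^2_\alpha(\ta)$ a.s. and Toeplitz's lemma, this yields $\ccMc_n\sim (b_n/2)P_n^2\tau^2_\alpha(\ta)$ a.s. Taking logarithms, $\log\ccMc_n\sim 2cn^{1-b}=\frac{2b_1}{1-b}n^{1-b}$ and $\log\log\ccMc_n\sim (1-b)\log n$ a.s.

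With these asymptotics in hand, the rest is essentially a transposition of the argument of Lemma \ref{L-MART1}. The explosion coefficient satisfies
\begin{equation*}
f_n=\frac{\ccMc_n-\ccMc_{n-1}}{\ccMc_n}\underset{n\to\infty}{\sim}2b_n=\frac{2b_1}{n^b}\quad\text{a.s.},
\end{equation*}
so that $\sum_{n}f_n^{p/2}<\infty$ for some $p>2$ (thanks to $b>1/2$ and the moment hypothesis on $X$). The QSL for martingales from Theorem 3 of \cite{bercu2004} gives
\begin{equation*}
\frac{1}{\log\ccMc_n}\sum_{k=1}^n f_k\,\frac{\cM_k^2}{\ccMc_k}\longrightarrow 1\quad\text{a.s.}
\end{equation*}
Replacing $\ccMc_k$ by $(b_k/2)P_{k-1}^2\tau^2_\alpha(\ta)$, $f_k$ by $2b_k$, and $\log\ccMc_n$ by $\frac{2b_1}{1-b}n^{1-b}$ in the ratios, a direct computation leads to \eqref{QSLMARTMLESS1}. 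For the LIL, the classical martingale LIL (Stout \cite{Stout1970}, or Corollary 6.4.25 in \cite{Duflo1997}) gives
\begin{equation*}
\limsup_{n\to\infty}\frac{\cM_n}{\sqrt{2\,\ccMc_n\log\log\ccMc_n}}=-\liminf_{n\to\infty}\frac{\cM_n}{\sqrt{2\,\ccMc_n\log\log\ccMc_n}}=1\quad\text{a.s.},
\end{equation*}
and substituting the asymptotics $\ccMc_n/P_{n-1}^2\sim (b_n/2)\tau^2_\alpha(\ta)$ and $\log\log\ccMc_n\sim(1-b)\log n$ produces \eqref{LILMARTMLESS1}.

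Finally, for the martingale $(N_n)$ the argument is even slightly simpler because $\dE[\xi_{k+1}^2\mid\cF_k]=\tau^2_\alpha(\tk)$ converges directly to $\tau^2_\alpha(\ta)$ without passing through \eqref{CVGRATIO}. The same computation for $\langle N\rangle_n$, the same explosion coefficient asymptotics, and the same invocation of the QSL and LIL for martingales show that $(N_n)$ shares both limit theorems with $(\cM_n)$, completing the proof.
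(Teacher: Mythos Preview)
Your proposal is correct and follows essentially the same route as the paper: compute the asymptotics of $\ccMc_n$ via the key estimate $\sum_{k=1}^n b_k^2 P_k^2 \sim \tfrac{1}{2}b_n P_n^2$, deduce the explosion coefficient $f_n\sim 2b_1/n^b$, and then invoke Theorem~3 of \cite{bercu2004} for the QSL and the martingale LIL of \cite{Stout1970} (Corollary~6.4.25 in \cite{Duflo1997}) for \eqref{LILMARTMLESS1}. The only minor difference is that the paper obtains the key estimate \eqref{COMPSI} by an integral comparison combined with \eqref{CVGPNZETA}, whereas you derive it from the telescoping identity $b_k^2P_k^2=\tfrac{b_k}{2-b_k}(P_k^2-P_{k-1}^2)$ and Abel summation; both arguments are equivalent and yield the same intermediate asymptotics.
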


\begin{proof}
We recall that the martingale $(\cM_n)$ and its predictable quadratic variation are given by
$$
\cM_{n+1}=\sum_{k=1}^{n} b_k P_k W_{k+1}
\hspace{1cm}\text{and}\hspace{1cm}
\ccMc_{n+1} = \sum_{k=1}^{n} b_k^2 P_k^2 \tau^2_k(\theta_k)
$$
where, thanks to \eqref{ASCVGNU},
$$
\lim_{n \rightarrow \infty} \tau_n^2(\tn) = \tau_\alpha^2(\ta) 
\hspace{1cm}\text{a.s}
$$
It is not hard to see via a comparison series integral together with convergence \eqref{CVGPNZETA} in Lemma \ref{L-CVGPN} that
\begin{equation}
\label{COMPSI}
\lim_{n \rightarrow \infty} \frac{1}{b_nP_n^2}\sum_{k=1}^n b_k^2 P_k^2 = \frac{1}{2}.
\end{equation}
Hence, we deduce from \eqref{COMPSI} and Toeplitz's lemma that
\begin{equation}
\label{CVGIPCMNLESS1}
\lim_{n \rightarrow \infty} \frac{1}{b_{n}P_{n}^2} \ccMc_{n+1} =  \frac{\tau^2_\alpha(\ta)}{2}
\hspace{1cm}\text{a.s.}
\end{equation}
Denote by $f_n$ the explosion coefficient associated with the martingale $(\cM_n)$,
$$
f_n = \frac{\ccMc_n - \langle \cM \rangle_{n-1}}{\ccMc_n}
$$
It follows from the very definition of
$P_n$ given by \eqref{DEFPN} together with \eqref{CVGIPCMNLESS1} that
\begin{equation}
\label{CVGFNLESS1}
\lim_{n \rightarrow \infty} n^bf_n=2b_1\hspace{1cm}\text{a.s.}
\end{equation}
It means that $f_n$ converges to zero almost surely at rate $n^b$ where $1/2<b<1$. 
Furthermore, the random variable $X$ has a moment of order $>2$. It implies that for some real number $p>2$,
$$
\sup_{n \geq 0} \dE[ | W_{n+1} |^p | \cF_n] < \infty \hspace{1cm}\text{a.s.}
$$
Consequently, we deduce from the QSL for martingales given in theorem 3 of \cite{bercu2004} that
\begin{equation}
\label{QSLCMNLESS1}
\lim_{n \rightarrow \infty} \frac{1}{\log \ccMc_n}   \sum_{k=1}^n f_k  \frac{\cM_k^2}{\langle \cM \rangle _k} 
 = 1
\hspace{1cm}\text{a.s.}
\end{equation}
Therefore, we obtain from \eqref{CVGPNLESS1} and \eqref{CVGIPCMNLESS1} together with \eqref{CVGFNLESS1} and \eqref{QSLCMNLESS1} that
\begin{equation}
\label{QSLCMNLESS2}
\lim_{n \rightarrow \infty} \frac{1}{n^{1-b}}   \sum_{k=1}^n   \frac{\cM_k^2}{P_{k-1}^2 }
 = \Bigl(\frac{b_1}{2(1-b)} \Bigr)\tau^2_{\alpha}(\theta_\alpha)
\hspace{1cm}\text{a.s.}
\end{equation}
Hereafter, we focus our attention on the proof of the LIL given 
by \eqref{LILMARTMLESS1}. Since $b>1/2$, we obtain from \eqref{CVGFNLESS1} that the explosion coefficient $f_n$ satisfies
$$
\sum_{n=1}^\infty f_n^{p/2} < +\infty
\hspace{1cm}\text{a.s.}
$$
Therefore, we deduce from the LIL for martingales \cite{Stout1970}, see also corollary 6.4.25 in \cite{Duflo1997} that
\begin{eqnarray}   
\limsup_{n \rightarrow \infty} \left(\frac{1}{2 \ccMc_n \log \log \ccMc_n} \right)^{1/2}
 \!\!\!\cM_n
&=& - \liminf_{n \rightarrow \infty}
\left(\frac{1}{2 \ccMc_n \log \log \ccMc_n} \right)^{1/2}
 \!\!\!\cM_n  \notag \\
&=& 1
\hspace{1cm}\text{a.s.}
\label{LILCMNLESS1}
\end{eqnarray}
Hence, we find from \eqref{CVGPNLESS1}, \eqref{CVGIPCMNLESS1} and \eqref{LILCMNLESS1} that
\begin{eqnarray*}   
\limsup_{n \rightarrow \infty} \left(\frac{n^b}{2 (1-b)\log n} \right)^{1/2}
 \!\!\!\Bigl(\frac{\cM_n}{P_{n-1}}\Bigr)
&=& - \liminf_{n \rightarrow \infty} \left(\frac{n^b}{ 2 (1-b) \log n} \right)^{1/2}
 \!\!\!\Bigl(\frac{\cM_n}{P_{n-1}}\Bigr)  \notag \\
&=& \left(  \frac{b_1}{2}  \right)^{1/2} \tau_{\alpha}(\theta_\alpha)
\hspace{1cm}\text{a.s.}
%\label{LILCMN2}
\end{eqnarray*}
The proof for the martingale $(N_n)$ is left to the reader inasmuch as it follows exactly the same lines than those for
the martingale $(\cM_n)$.
\end{proof}

\noindent{\bf Proof of Theorem \ref{T-LILQSLESS1}.} 
%%%%%%%%%%%%%%%%%%%%%%%%%%%%%%%%%%%%%%%%%%%%%%%%%%%%%%%%%%%%%%%%%%%%%%%%%%%%%%%%%%%%%%%%%%%%%%%%%%
We shall proceed as in the proof of Theorem \ref{T-LILQSEQUAL1}. We already saw from \eqref{QSLMARTMLESS1} that 
\begin{equation*}
\lim_{n \rightarrow \infty} \frac{1}{n^{1-b}} \sum_{k=1}^n \Bigl(  \frac{\cM_k}{P_{k-1}} \Bigr)^2= \Bigl(\frac{b_1}{2(1-b)} \Bigr)\tau^2_{\alpha}(\theta_\alpha)
\hspace{1cm}\text{a.s.}
\end{equation*}
Our goal is to prove that the sequence $(\Delta_n)$ given by \eqref{DIFDELTA} satisfies the QSL
\begin{equation}
\label{QSLDELTANLESS1}
\lim_{n \rightarrow \infty} \frac{1}{n^{1-b}} \sum_{k=1}^n \Delta^2_k= \Bigl(\frac{b_1}{2(1-b)} \Bigr)\tau^2_{\alpha}(\theta_\alpha)
\hspace{1cm}\text{a.s.}
\end{equation}
On the one hand, we have from \eqref{DEFMARTNEW} and \eqref{MAJLILStepa} that
\begin{equation*}
| \cH_{n+1} | = O \left( \sum_{k=1}^n \frac{b_kP_k | \nu_{k+1}| \sqrt{\log k}}{k^{a/2}} \right)
\hspace{1cm}\text{a.s.}
\end{equation*}
In addition, one can easily check from \eqref{DEFDELTAN} and \eqref{DEFNUNN} that
$$
\lim_{n \rightarrow \infty}  n^{1-a} \nu_{n+1}= \frac{(b-a)C_\alpha}{a_1f(\ta)}.
$$
Hence, we obtain from convergence \eqref{CVGPNZETA} in Lemma \ref{L-CVGPN} together with a comparison series integral 
as previously done in the proof of Theorem \ref{T-LILQSEQUAL1} that
\begin{equation}
\label{PRCVGHNLESS1}
| \cH_{n+1} | = O \left( \sum_{k=1}^n \frac{P_k \sqrt{\log k}}{k^{1+b-a/2}} \right)=O \left( \frac{P_n\sqrt{\log n}}{n^{1-a/2}} \right)
\hspace{1cm}\text{a.s.}
\end{equation}
Consequently, we deduce from \eqref{PRCVGHNLESS1} that
\begin{equation}
\label{PRCVGHNLESS2}
\sum_{n=1}^\infty \Bigl(\frac{\cH_n}{P_{n-1}} \Bigr)^2 <+\infty
\hspace{1cm}\text{a.s.}
\end{equation}
On the other hand, we already saw from \eqref{PRCVGRN1} that
\begin{equation*}
| \cR_{n+1} | = O \left( \sum_{k=1}^n b_kP_k\bigl(\theta_k - \ta\bigr)^2 \right)
\hspace{1cm}\text{a.s.}
\end{equation*}
which implies that
\begin{equation}
\label{PRCVGRNLESS1}
| \cR_{n+1} | = O \left( \sum_{k=1}^n \frac{P_k\log k }{k^{a+b}} \right) = O \left( \frac{P_n \log n }{n^{a}} \right)
\hspace{1cm}\text{a.s.}
\end{equation}
Then, as $a>1/2$, we find from \eqref{PRCVGRNLESS1}
\begin{equation}
\label{PRCVGRNLESS2}
\sum_{n=1}^\infty \Bigl(\frac{\cR_n}{P_{n-1}} \Bigr)^2 <+\infty
\hspace{1cm}\text{a.s.}
\end{equation}
Therefore, we obtain from \eqref{PRCVGHNLESS2} and \eqref{PRCVGRNLESS2} that the QSL 
\eqref{QSLDELTANLESS1} holds true. In order to prove \eqref{QSL3}, it only remains to show via \eqref{PRQSLFIN} that
\begin{equation}
\label{PRQSLFINLESS1}
\lim_{n \rightarrow \infty} \frac{1}{n^{1-b}} \sum_{k=1}^n \delta_k^2 \bigl(  \theta_k - \ta \bigr)^2=0
\hspace{1cm}\text{a.s.}
\end{equation}
We recall from \eqref{ABELDELTAN} that
\begin{equation*}
\sum_{k=1}^{n}  \delta_k^2 \bigl( \theta_k - \ta  \bigr)^2=\delta_n^2 \Lambda_n + \sum_{k=1}^{n-1}(\delta_k^2 - \delta_{k+1}^2 )\Lambda_k.
\end{equation*}
We obtain from \eqref{DEFDELTAN} that
\begin{equation}
\label{CVGdeltaLESS}
\lim_{n \rightarrow \infty} n^{b-a} \delta_n= \frac{b_1 C_\alpha}{a_1 f(\ta)}.
\end{equation}
Then, it follows from \eqref{PRQSLFIN2} and \eqref{CVGdeltaLESS} that
\begin{equation}
\label{PRQSLFINLESS2}
\lim_{n \rightarrow \infty}  \frac{1}{n^{1+a-2b}} \delta_n^2 \Lambda_n= \frac{b_1^2 C_\alpha^2\alpha(1-\alpha)}{2 a_1(1-a)f^3(\ta)}
\hspace{1cm}\text{a.s.}
\end{equation}
Consequently, as $a<b$, we deduce from \eqref{PRQSLFINLESS2} that
\begin{equation}
\label{PRQSLFINLESS3}
\lim_{n \rightarrow \infty}  \frac{1}{n^{1-b}} \delta_n^2 \Lambda_n=0
\hspace{1cm}\text{a.s.}
\end{equation}
By the same token, we also find from \eqref{PRQSLFIN2} and \eqref{CVGdeltaLESS} that
\begin{equation}
\label{PRQSLFINLESS4}
\lim_{n \rightarrow \infty}  \frac{1}{n^{1-b}} \sum_{k=1}^{n-1}(\delta_k^2 - \delta_{k+1}^2 )\Lambda_k=0
\hspace{1cm}\text{a.s.}
\end{equation}
Then, we clearly obtain from \eqref{PRQSLFINLESS3} and \eqref{PRQSLFINLESS4} that
convergence \eqref{PRQSLFINLESS1} holds true. As before, the proof of the QSL for the convexified estimator $(\vtnt)$ is much more easier
and left to the reader.
%%%%%%%%%%%%%%%%%%%%%%%%%%%%%%%%%%%%%%%%%%%%%%%%%%%%%%%%%%%%%%%%%%%%%%%%%%%%%%%%%%%%%%%%%%%%%%%%%%
We now focus our attention on the LIL for our estimates $(\vtnh)$ and $(\vtnt)$.
We start by proving the LIL for the sequence $(\Delta_n)$ given by \eqref{DIFDELTA}. 
We immediately obtain from \eqref{DECALGON3} that 
\begin{equation}
\label{PRLILHLESS1}
\left(\frac{n^b}{2(1-b) \log n} \right)^{1/2}\!\!\Delta_n=
\left(\frac{n^b}{2(1-b) \log n} \right)^{1/2}\Bigl(\frac{\cM_n +\cH_n+\cR_n }{P_{n-1}}\Bigr).
\end{equation}
We already saw in Lemma \ref{L-MARTLESS1} that the martingale $(\cM_n)$ satisfies the LIL given by 
\eqref{LILMARTMLESS1}. In addition, as $b<1<2a$, we get from \eqref{PRCVGHNLESS1} and \eqref{PRCVGRNLESS1} that
$$
\lim_{n \rightarrow \infty} n^b \Bigl( \frac{\cH_n }{P_{n-1}}\Bigr)^2=0 \hspace{1cm}\text{and}\hspace{1cm}
\lim_{n \rightarrow \infty} n^b \Bigl( \frac{\cR_n }{P_{n-1}}\Bigr)^2=0 
\hspace{1cm}\text{a.s.}
$$
which clearly ensures that
\begin{equation*}
\lim_{n \rightarrow \infty} \left(\frac{n^b}{2(1-b) \log n} \right)^{1/2} \Bigl( \frac{\cH_n }{P_{n-1}}\Bigr) =0
\hspace{1cm}\text{a.s.}
\end{equation*}
and
\begin{equation*}
\lim_{n \rightarrow \infty} \left(\frac{n^b }{2(1-b) \log n} \right)^{1/2} \!\!\Bigl( \frac{\cR_n }{P_{n-1}}\Bigr) =0
\hspace{1cm}\text{a.s.}
\end{equation*}
Consequently, we find from \eqref{LILMARTMLESS1} and \eqref{PRLILHLESS1} that $(\Delta_n)$ satisfies the LIL
\begin{eqnarray}  
\limsup_{n \rightarrow \infty} \left(\frac{n^b}{2(1-b)  \log n} \right)^{1/2}
 \!\!\Delta_n
&=& -  \liminf_{n \rightarrow \infty} \left(\frac{n^b}{2 (1-b) \log n} \right)^{1/2}
 \!\!\Delta_n  \notag \\
&=& \left(  \frac{b_1}{2}  \right)^{1/2} \tau_{\alpha}(\theta_\alpha)
\hspace{1cm}\text{a.s.}
\label{LILDELTANLESS1}
\end{eqnarray}
Hereafter, we clearly have from \eqref{DIFDELTA} that
\begin{equation}
\label{PRLILHLESS2}
\left(\frac{n^b}{2 (1-b) \log n} \right)^{1/2}
\!\!\bigl( \vtnh - \vta \bigr)=\left(\frac{n^b}{2 (1-b) \log n} \right)^{1/2}
\!\!\bigl( \Delta_n + \delta_n \bigl( \tn- \ta \bigr) \bigr).
\end{equation}
It follows from \eqref{LILRMStepa} and \eqref{CVGdeltaLESS} that
$$
\left(\frac{n^b}{2 (1-b)  \log n} \right)
 \!\!\!\ \bigl| \delta_n ( \tn- \ta) \bigr|^2=O\left( \frac{1}{n^{b-a}} \right)
\hspace{1cm}\text{a.s.}
$$
Since $a<b$, it clearly implies that
\begin{equation}
\label{PRLILHLESS3}
\lim_{n \rightarrow \infty} \left(\frac{n^b}{2 (1-b)\log n} \right)^{1/2} \!\!\delta_n \bigl( \tn- \ta \bigr) =0
\hspace{1cm}\text{a.s.}
\end{equation}
Therefore, we obtain \eqref{LIL3}  from \eqref{LILDELTANLESS1}, \eqref{PRLILHLESS2} and \eqref{PRLILHLESS3}. 
The proof of the LIL for the convexified estimator $(\vtnt)$ is
straightforward and left to the reader, which achieves the proof of Theorem \ref{T-LILQSLESS1}.
\demend

%%%%%%%%%%%%%%%%%%%%%%%%%%%%%%%%%%%%%%%%%%%%%%%%%%%%%%%%%%%%%%%%%%%%%%%%%%%%%%%%%%%%%%%%%%%%%%%%%%
\vspace{-2ex}
%%%%%%%%%%%%%%%%%%%%%%%%%%%%%%%%%%%%%%%%%%%%%%%%%%%%%%%%%%%%%%%%%%%%%%%%%%%%%%%%%%%%%%%%%%%%%%%%%%

\section{Proofs of the asymptotic normality results}
\label{S-PRAN}

The proof of Theorem \ref{T-AN} relies on the central limit theorem for the two-time-scale stochastic 
algorithm given in Theorem 1 of Mokkadem and Pelletier \cite{MokkademPelletier2006}. It is a sophisticated application of
this result for the standard estimator $(\vtnh)$, while it is a direct application for the convexified estimator $(\vtnt)$.
\ \vspace{1ex}\\
\noindent{\bf Proof of Theorem \ref{T-AN}.} 
%%%%%%%%%%%%%%%%%%%%%%%%%%%%%%%%%%%%%%%%%%%%%%%%%%%%%%%%%%%%%%%%%%%%%%%%%%%%%%%%%%%%%%%%%%%%%%%%%%
We start with the proof for the standard estimator $\vtnh$. As it was previously done in Section \ref{S-PRASCVG}, our strategy is
first to establish the joint asymptotic normality for the couple $(\theta_n,\Delta_n)$ where $\Delta_n$ is given by \eqref{DIFDELTA}, 
and then to deduce the joint asymptotic normality for the couple $(\theta_n,\vtnh)$.
We have from \eqref{TTSALGO1} together with \eqref{DECALGON2} that
for all $n \geq 1$, 
\begin{equation}
\label{PRAN1}
\left \{
\begin{aligned}
&\tnp  = \tn+a_n \cX_{n+1}\vspace{1ex}\\
&\Delta_{n+1}  =\Delta_n+b_n \cY_{n+1}
\end{aligned}
\right.
\end{equation}
where
\begin{equation*}
\left \{
\begin{aligned}
&\cX_{n+1}  =  f(\tn,\Delta_n)+\psi_n^{(\theta)}+ \cV_{n+1}\\
&\cY_{n+1} =  g(\tn,\Delta_n)+\psi_n^{(\Delta)}+ \cW_{n+1}\\
\end{aligned}
\right.
\end{equation*}
with $f(\theta, \Delta)=\alpha - F(\theta)$, $\psi_n^{(\theta)}=0$, $\cV_{n+1}=F(\theta_n)-\rI_{\{X_{n+1} \leq \tn \}}$ and
$g(\theta, \Delta)=- \Delta$, 
$$
\psi_n^{(\Delta)}=R_\alpha(\tn) +\frac{a_n}{b_n}\delta_{n+1} G_\alpha(\theta_n) +\nu_{n+1} (\tn - \ta), 
$$
$$
\cW_{n+1}= \veps_{n+1}+\frac{a_n}{b_n}\delta_{n+1}V_{n+1}.
$$
By denoting $\Delta_\alpha=0$, we clearly have $f(\ta, \Delta_\alpha)=0$ and $g(\ta, \Delta_\alpha)=0$. To be more precise 
\begin{equation*}
\begin{pmatrix}
f(\theta, \Delta_\alpha) \\
g(\theta, \Delta_\alpha)
\end{pmatrix}
=
\begin{pmatrix}
-f^\prime(\ta) & 0 \\
0 &-1
\end{pmatrix}
\begin{pmatrix}
\theta - \ta\\
\Delta - \Delta_\alpha
\end{pmatrix}
+
\begin{pmatrix}
O\bigl( ||\theta - \ta||^2 \bigr)\\
0
\end{pmatrix}
.
\end{equation*}
On the one hand, it follows from the conjunction of \eqref{TAYLORH}, \eqref{DEFREMAINDERS} and \eqref{DEFDELTAN} that
$$
\psi_n^{(\Delta)}=r_n^{(\Delta)} + O\bigl( ||\theta_n - \ta||^2 \bigr)
$$
where $r_n^{(\Delta)}=nu_{n+1} (\tn - \ta)$. On the other hand, we infer from \eqref{LILSUPRMStepa} and \eqref{DEFNUNN} that
$$
\bigl| r_n^{(\Delta)} \bigr| = O\Bigl( \frac{\sqrt{n^a \log n}}{n}\Bigr)=o\bigl(\sqrt{b_n}\bigr)
\hspace{1cm}\text{a.s.}
$$
Furthermore, $\dE[\cV_{n+1} | \cF_n]=0$,  $\dE[\cW_{n+1} | \cF_n]=0$, and we already saw in Sections \ref{S-MA} and \ref{S-PRASCVG} that 
$\dE[\cV_{n+1}^2 | \cF_n]=F(\theta_n)(1-F(\theta_n)$ and $\dE[\cW_{n+1}^2 | \cF_n]=\tau_\alpha^2(\tn)$.
One can also check that 
$$
\dE[\cV_{n+1} \cW_{n+1} | \cF_n]=F(\theta_n)\Bigl(H_\alpha(\theta_n) - \frac{a_n}{b_n}\delta_{n+1}\bigl(1-F(\theta_n)\bigr)\Bigr).
$$
It clearly implies that
\begin{equation*}
\lim_{n \rightarrow \infty} 
\begin{pmatrix}
\dE[\cV_{n+1}^2 | \cF_n] & \dE[\cV_{n+1} \cW_{n+1} | \cF_n] \\
\dE[\cV_{n+1} \cW_{n+1} | \cF_n] & \dE[\cW_{n+1}^2 | \cF_n]
\end{pmatrix}
=
\begin{pmatrix}
\alpha(1-\alpha) & \alpha ( \vta - \ta ) \\
\alpha ( \vta - \ta )  & \tau_\alpha^2(\ta)
\end{pmatrix}
\hspace{0.5cm}\text{a.s.}
\end{equation*}
Consequently, all the conditions of Theorem 1 in \cite{MokkademPelletier2006}
are satisfied with
\begin{equation*}
\Sigma_{\ta}= \frac{\alpha(1-\alpha)}{2f(\ta)}
\end{equation*}
and
\begin{equation*}
\Sigma_{\vta}= \left \{
\begin{array}[c]{ccc}
{\displaystyle \frac{b_1 \tau^2_{\alpha}(\theta_\alpha)}{2b_1 - 1}}  & \text{if} & b=1, \vspace{1ex} \\
{\displaystyle \frac{ \tau^2_{\alpha}(\theta_\alpha)}{2}} & \text{if} & b<1.
\end{array}
\right.
\end{equation*}
Therefore, as $\Delta_\alpha=0$, we obtain from \cite{MokkademPelletier2006} the joint asymptotic normality 
\begin{equation}
\label{PRAN2}
\begin{pmatrix}
\sqrt{n^a} \bigl(\theta_n - \ta\bigr) \vspace{1ex} \\
\sqrt{n^b} \Delta_n  \\
\end{pmatrix} \liml \cN\left(0,  \begin{pmatrix}
\Gamma_{\ta} & 0 \\
0 & \Gamma_{\vta} \\
\end{pmatrix}\right)
\end{equation} 
where $\Gamma_{\ta}=a_1 \Sigma_{\ta}$ and $\Gamma_{\vta}=b_1 \Sigma_{\vta}$. Hereafter, in order to prove the
joint asymptotic normality for the couple $(\theta_n,\vtnh)$,  it is only necessary to show from the
very definition of $\Delta_n$ given in \eqref{DIFDELTA} that
\begin{equation}
\label{PRAN3}
\lim_{n \rightarrow \infty} \sqrt{n^b} \delta_n \bigl(\tn -\ta\bigr)=0
\hspace{1cm}\text{a.s.}
\end{equation}
We already saw from \eqref{CVGdelta} and \eqref{CVGdeltaLESS} that
\begin{equation}
\label{PRAN4}
\lim_{n \rightarrow \infty} n^{b-a} \delta_n= \frac{b_1 C_\alpha}{a_1 f(\ta)}.
\end{equation}
Hence, we deduce from \eqref{LILSUPRMStepa} and \eqref{PRAN4} that
\begin{equation*}
 \sqrt{n^b} \bigl| \delta_n (\tn -\ta)\bigr|=O\Bigl( \frac{\sqrt{n^a \log n}}{\sqrt{n^b}}\Bigr)
\hspace{1cm}\text{a.s.}
\end{equation*}
which ensures that \eqref{PRAN3} holds true. Consequently, \eqref{AN1} clearly follows from 
\eqref{PRAN2} and \eqref{PRAN3} .
%%%%%%%%%%%%%%%%%%%%%%%%%%%%%%%%%%%%%%%%%%%%%%%%%%%%%%%%%%%%%%%%%%%%%%%%%%%%%%%%%%%%%%%%%%%%%%%%%%
The proof for the convexified estimator $(\vtnt)$ is much more easy to handle. We have from \eqref{TTSALGO2} that
for all $n \geq 1$, 
\begin{equation}
\label{PRAN5}
\left \{
\begin{aligned}
&\tnp  = \tn+a_n \cX_{n+1}\vspace{1ex}\\
&\vtnpt  =\vtnt+b_n \cY_{n+1}
\end{aligned}
\right.
\end{equation}
where
\begin{equation*}
\left \{
\begin{aligned}
&\cX_{n+1}  =  f(\tn,\vtnt)+\psi_n^{(\theta)}+ \cV_{n+1}\\
&\cY_{n+1} =  g(\tn,\vtnt)+\psi_n^{(\vartheta)}+ \cW_{n+1}\\
\end{aligned}
\right.
\end{equation*}
with $f(\theta, \vartheta)=\alpha - F(\theta)$, $\psi_n^{(\theta)}=0$, $\cV_{n+1}=F(\theta_n)-\rI_{\{X_{n+1} \leq \tn \}}$ and
$g(\theta, \vartheta)=\vta - \vartheta$, $\psi_n^{(\vartheta)}=L_\alpha(\theta_n) - \vta$, $\cW_{n+1}=Z_{n+1}-L_\alpha(\theta_n)$,
where we recall that $\dE[Z_{n+1} | \cF_n]=L_\alpha(\theta_n)$ with $L_\alpha(\theta)$ given by  \eqref{DEFHL}.
We clearly have $f(\ta, \vta)=0$ and $g(\ta, \vta)=0$. To be more precise, 
\begin{equation*}
\begin{pmatrix}
f(\theta, \vartheta) \\
g(\theta, \vartheta)
\end{pmatrix}
=
\begin{pmatrix}
-f^\prime(\ta) & 0 \\
0 &-1
\end{pmatrix}
\begin{pmatrix}
\theta - \ta\\
\vartheta - \vta
\end{pmatrix}
+
\begin{pmatrix}
O\bigl( ||\theta - \ta||^2 \bigr)\\
0
\end{pmatrix}
.
\end{equation*}
In addition, we deduce from \eqref{TAYLORL} that
$ \psi_n^{(\vartheta)} = L_\alpha(\theta_n) - L_\alpha(\ta) = O\bigl( ||\theta_n - \ta||^2 \bigr)$.
Furthermore, $\dE[\cV_{n+1} | \cF_n]=0$,  $\dE[\cW_{n+1} | \cF_n]=0$, and we already saw in Sections \ref{S-MA} and \ref{S-PRASCVG} that 
$\dE[\cV_{n+1}^2 | \cF_n]=F(\theta_n)(1-F(\theta_n)$ and $\dE[\cW_{n+1}^2 | \cF_n]=\tau_\alpha^2(\tn)$.
One can also verify that $\dE[\cV_{n+1} \cW_{n+1} | \cF_n]=F(\theta_n)\bigl(L_\alpha(\theta_n) - \theta_n\bigr)$.
It clearly implies that
\begin{equation*}
\lim_{n \rightarrow \infty} 
\begin{pmatrix}
\dE[\cV_{n+1}^2 | \cF_n] & \dE[\cV_{n+1} \cW_{n+1} | \cF_n] \\
\dE[\cV_{n+1} \cW_{n+1} | \cF_n] & \dE[\cW_{n+1}^2 | \cF_n]
\end{pmatrix}
=
\begin{pmatrix}
\alpha(1-\alpha) & \alpha ( \vta - \ta ) \\
\alpha ( \vta - \ta )  & \tau_\alpha^2(\ta)
\end{pmatrix}
\hspace{0.5cm}\text{a.s.}
\end{equation*}
Consequently, our two-time-scale stochastic algorithm satisfies all the conditions of Theorem 1 in \cite{MokkademPelletier2006}
where the asymptotic variances $\Sigma_{\ta}$ and $\Sigma_{\vta}$ have been previously defined.
Finally, we obtain the joint asymptotic normality \eqref{AN1} where
$\Gamma_{\ta}=a_1 \Sigma_{\ta}$ and $\Gamma_{\vta}=b_1 \Sigma_{\vta}$, which completes the proof of Theorem \ref{T-AN}.
\demend

%%%%%%%%%%%%%%%%%%%%%%%%%%%%%%%%%%%%%%%%%%%%%%%%%%%%%%%%%%%%%%%%%%%%%%%%%%%%%%%%%%%%%%%%%%%%%%%%%%
\vspace{-2ex}
%%%%%%%%%%%%%%%%%%%%%%%%%%%%%%%%%%%%%%%%%%%%%%%%%%%%%%%%%%%%%%%%%%%%%%%%%%%%%%%%%%%%%%%%%%%%%%%%%%

\section{Numerical experiments on real data}
\label{S-NE}

%%%%%%%%%%%%%%%%%%%%%%%%%%%%%%%%%%%%%%%%%%%%%%%%%%%%%%%%%%%%%%%%%%%%%%%%%%%%%%%%%%%%%%%%%%%%%%%%%%
We briefly illustrate the asymptotic behavior of our two stochastic algorithms  $(\vtnh)$ and $(\vtnt)$  with different tuning of parameters. 
Since we have several elements of variability in the parameters, we have chosen typical setups even if our presentation is not exhaustive.
In our synthetic benchmark, we shall consider Exponential and Gamma distributions, even though explicit formula may be found for the pair 
$(\ta,\vta)$.
\ \vspace{1ex} \\
First of all, we wish to point out that our recursive procedure is very fast for both algorithms since a set of $1000$ observations is handled 
in less than 0.1 second with a standard laptop. Next, Figure \ref{fig:asconvergence} illustrates the good almost sure behavior of the 
standard and convexified algorithms both on Exponential and Gamma distributions. Here, we consider
the $\cE(1/10)$ and $\cG(4,3)$ distributions. 
\begin{figure}[h]
\begin{minipage}[b]{0.4\linewidth}
\centerline{\includegraphics[width=8cm]{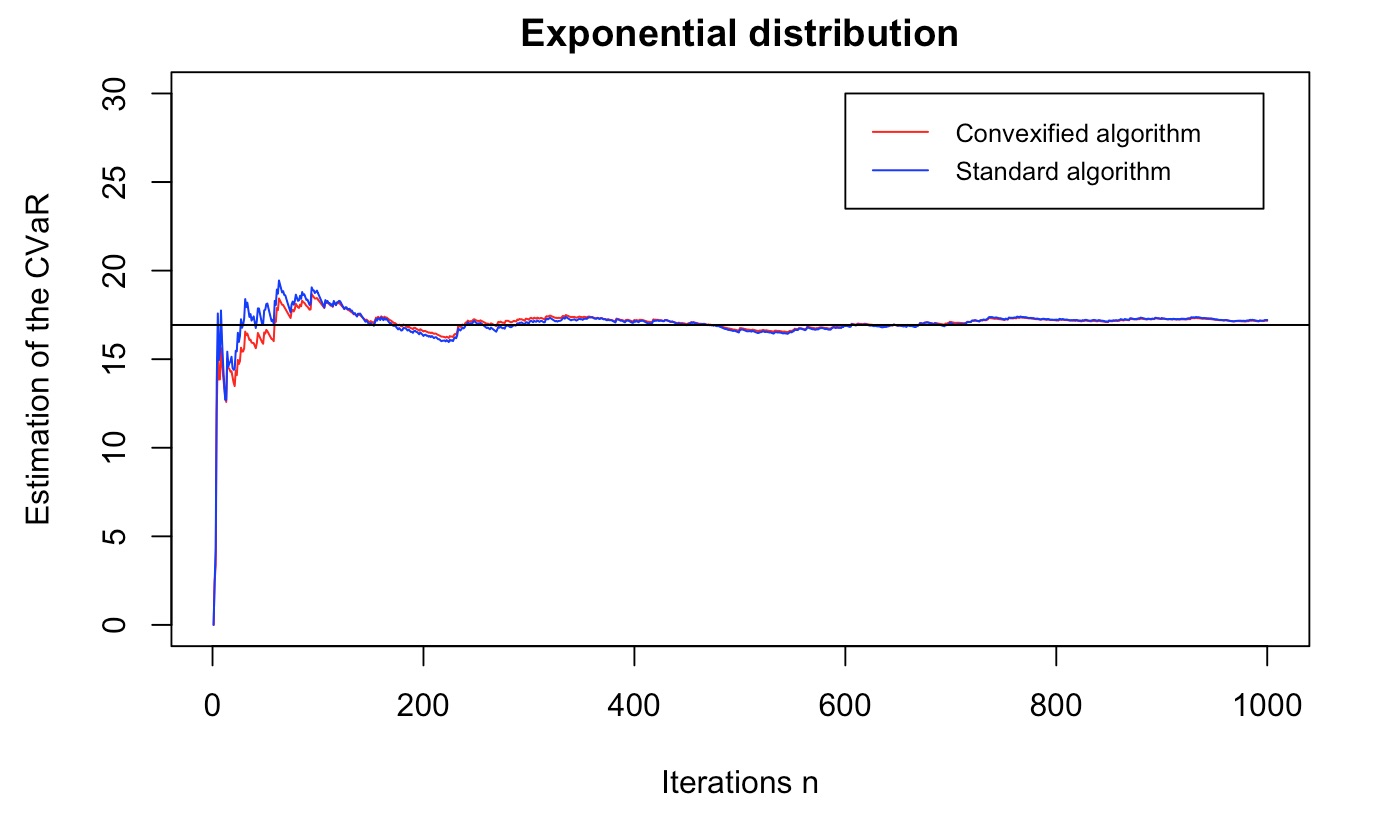}}
\end{minipage}
\hspace{2cm}
\begin{minipage}[b]{0.4\linewidth}
\centerline{\includegraphics[width=8cm]{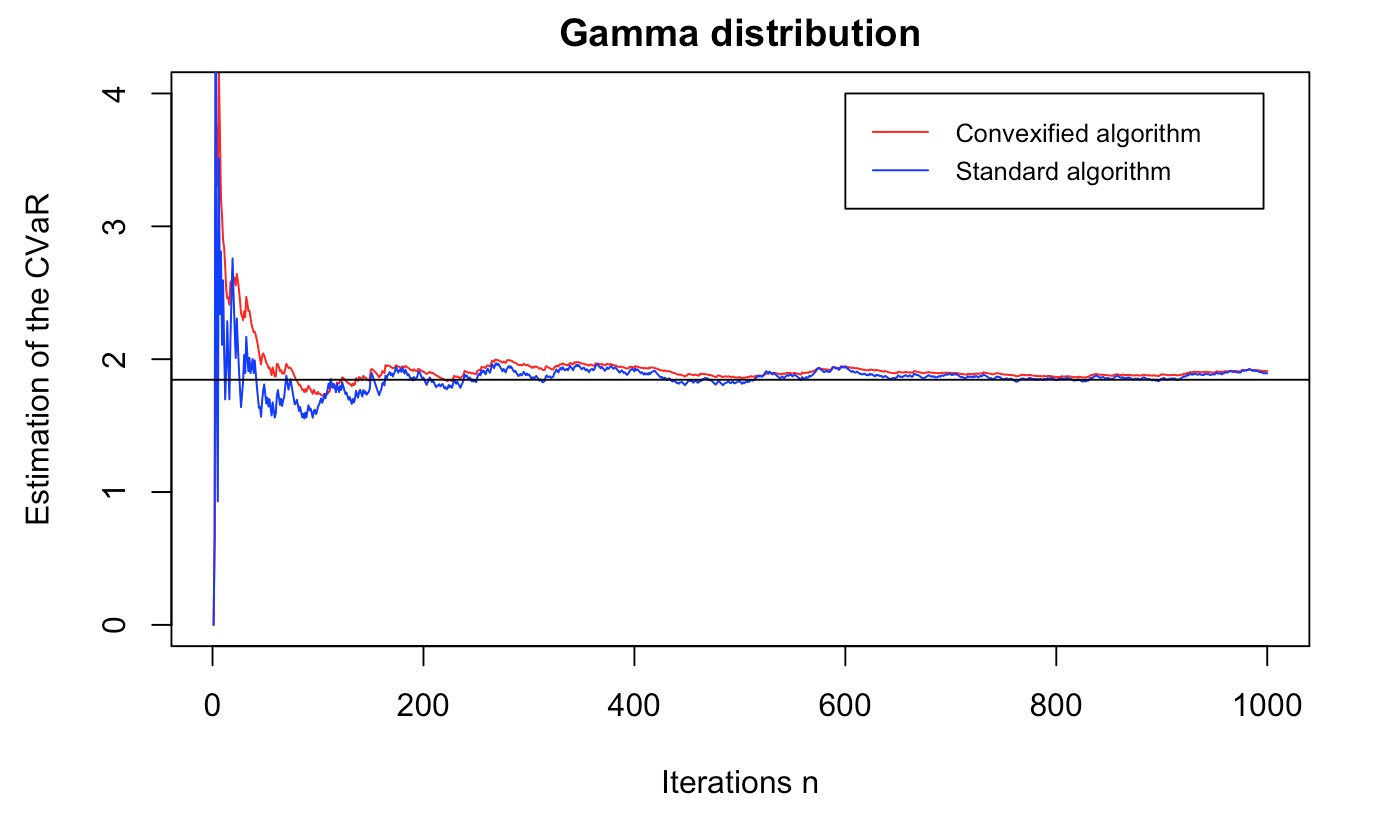}}
\end{minipage}
\vspace{-1em}
\caption{Almost sure convergence of our algorithms for $\alpha=0.5$ and $b_n=1/n$.
\label{fig:asconvergence}}
\end{figure}

Second, one can verify and compare the limiting variance of the asymptotic normality involved in 
Theorem \ref{T-AN} for several values of $a$ and $b$. 
Figure \ref{fig:CLT} represents the histogram of the rescaled algorithms for several values of $a$ and $b$.
One can check that the convexified algorithm outperforms the standard algorithm as soon as $b<a$.

%\vspace{-1em}
\begin{figure}[h]
\begin{minipage}[b]{0.4\linewidth}
\centerline{\includegraphics[width=6.5cm]{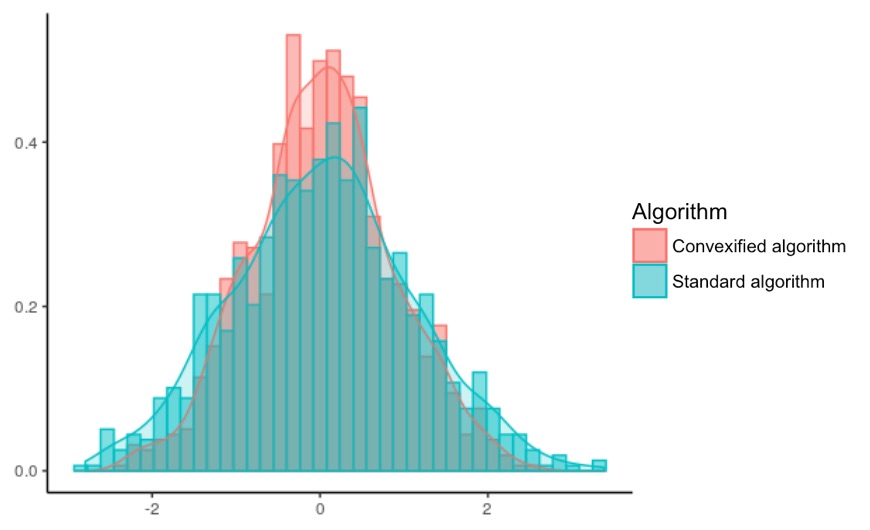}}
\end{minipage}\hfill
\begin{minipage}[b]{0.4\linewidth}
\centerline{\includegraphics[width=8cm]{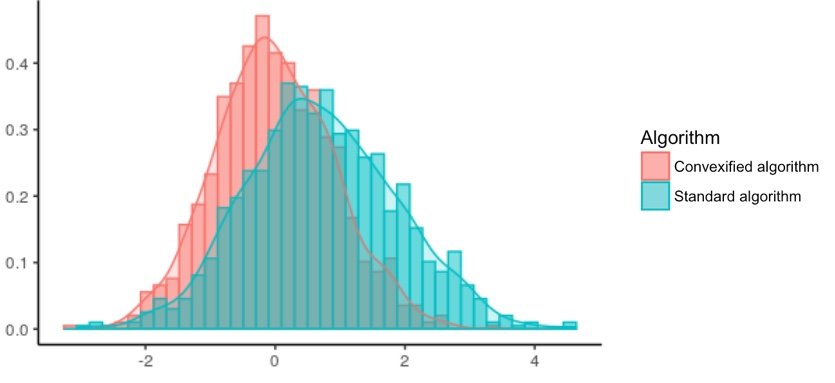}}
\end{minipage}\\
\begin{minipage}[b]{0.4\linewidth}
\centerline{\includegraphics[width=8cm]{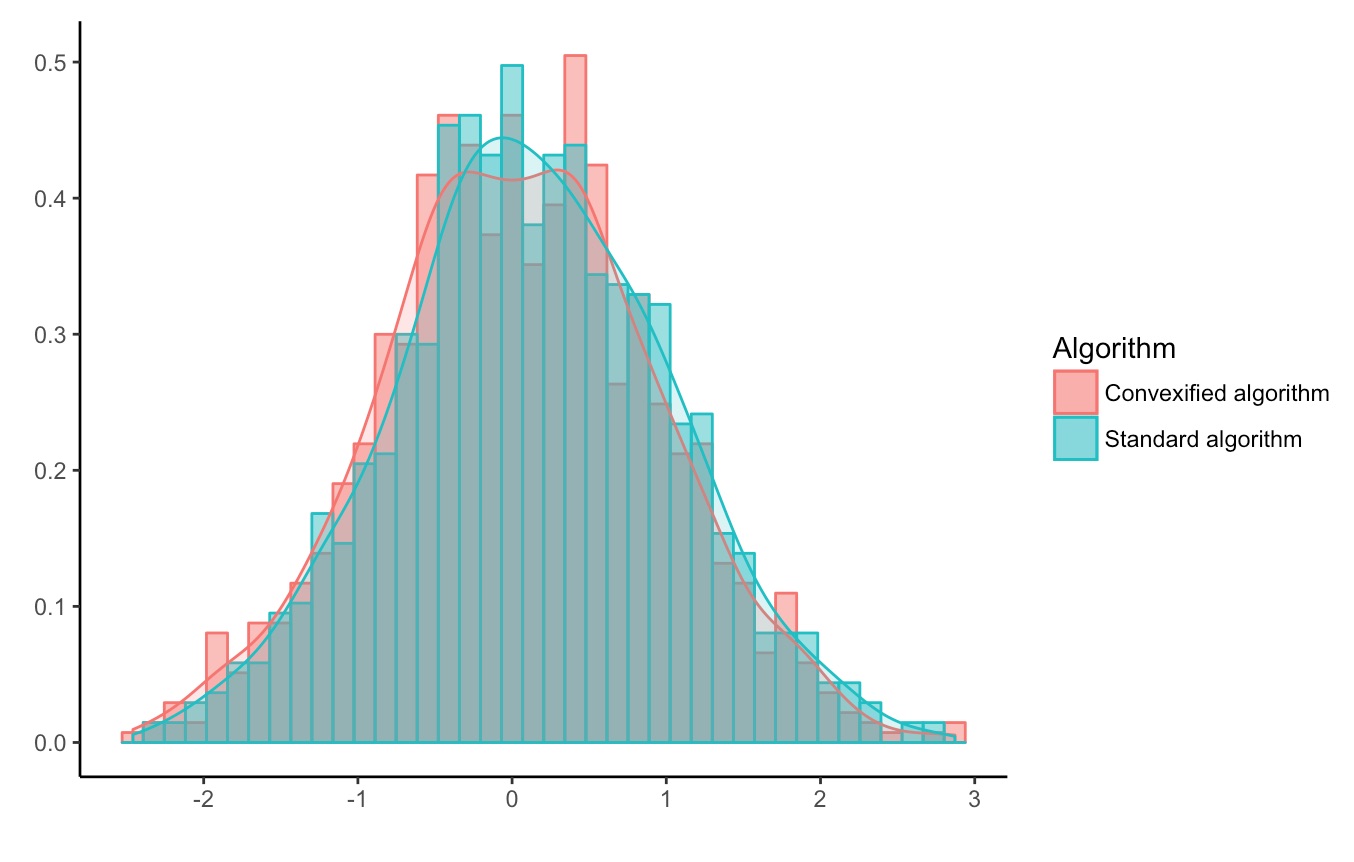}}
\end{minipage}\\
\vspace{-1em}
\caption{ Distribution of the rescaled algorithms in different situations: top-left ($a=2/3<b=4/5<1$), top-right  ($b=2/3<a=4/5<1$), bottom ($a=2/3<b=1$). 
One can verify the asymptotic normality with larger variance for the standard rescaled algorithm (top-right). \label{fig:CLT}}
\end{figure}

One can also use our method to estimate online $95\%$ confidence intervals for the superquantile $\vta$ 
as explained in Remark 3.4. This is illustrated in Figure \ref{fig:IC-online} with the Exponential and Gamma distributions 
with $a=2/3$ and $b=1$.

\begin{figure}[h]
\begin{minipage}[b]{0.4\linewidth}
\centerline{\includegraphics[width=7cm]{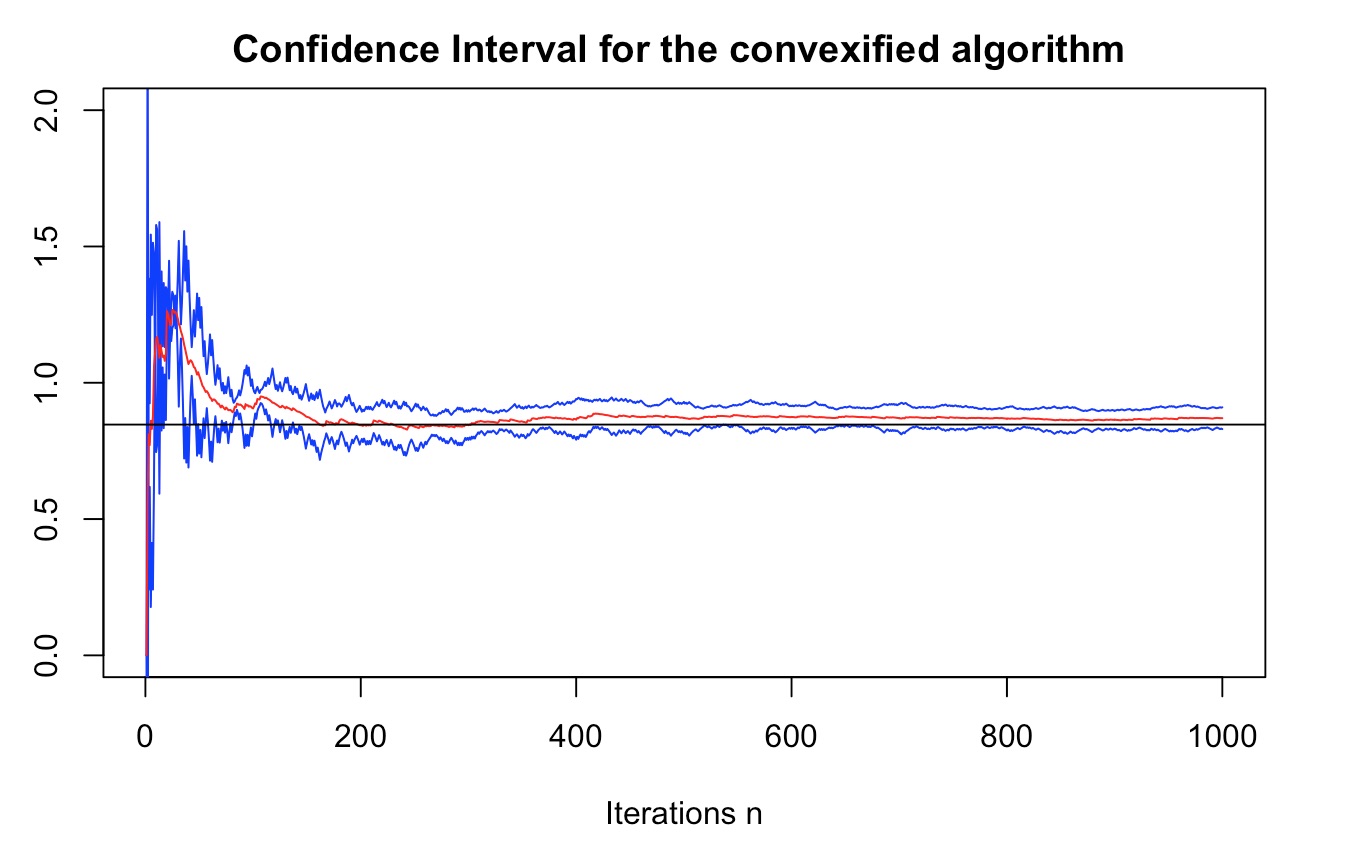}}
\end{minipage}
\hspace{2cm}
\begin{minipage}[b]{0.4\linewidth}
\centerline{\includegraphics[width=7cm]{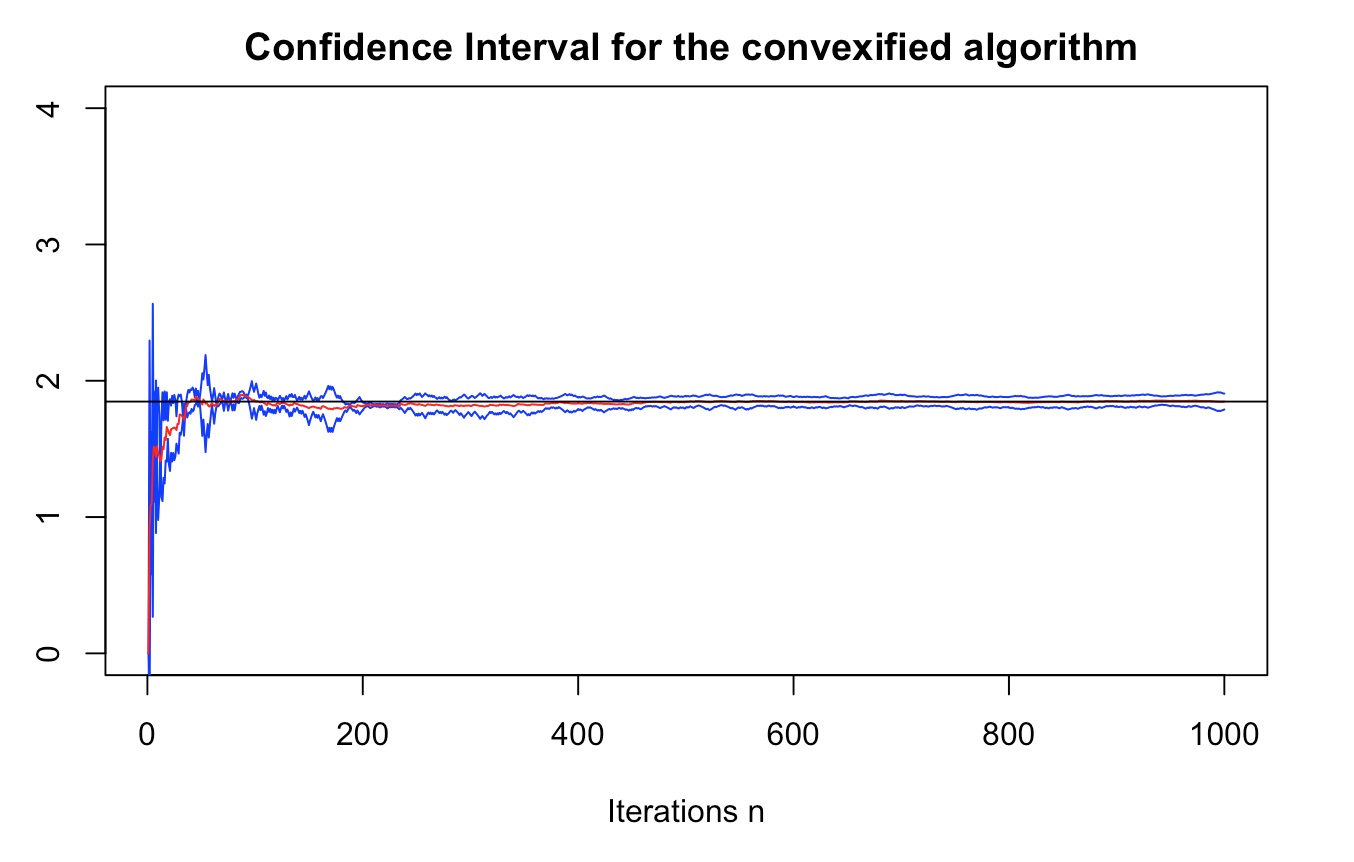}}
\end{minipage}
\vspace{-1em}
\caption{Online confidence interval with the convexified algorithm for the Exponential and Gamma distributions.}
\label{fig:IC-online}
\end{figure}

\subsection{Real data}
We finally illustrate, as a proof of concept, the use of our two algorithms
on financial real-data that are freely available on the R-package \textrm{tseries} (EuStockMarkets dataset). 
Some more recent ressources may also be downloaded  on the Yahoo! Finance website.
We consider the four time series of the financial stock-markets \textrm{DAX, CAC40, SMI, FTSE} between 2014 and 2018 and compute 
the CVaR of the weekly log-returns, that are common indicators in the analysis of financial markets.
It is commonly admitted as a reasonnable approximation that in non-exceptionnal situations, the log-returns are not far 
from an independent and identically distributed set of observations. As a major interest in finance, we compute the 
negative CVaR at the level $10\%$ and some $95\%$ confidence intervals as well. Our results are presented in 
Figure \ref{fig:finance} for the convexified algorithm tuned with the parameters $a=2/3$, $a_1=5$ and $b=1$, $b_1=3/4$.

%\vspace{-1em}
\begin{figure}[h]
\begin{minipage}[b]{0.4\linewidth}
\centerline{\includegraphics[height=3cm,width=10cm]{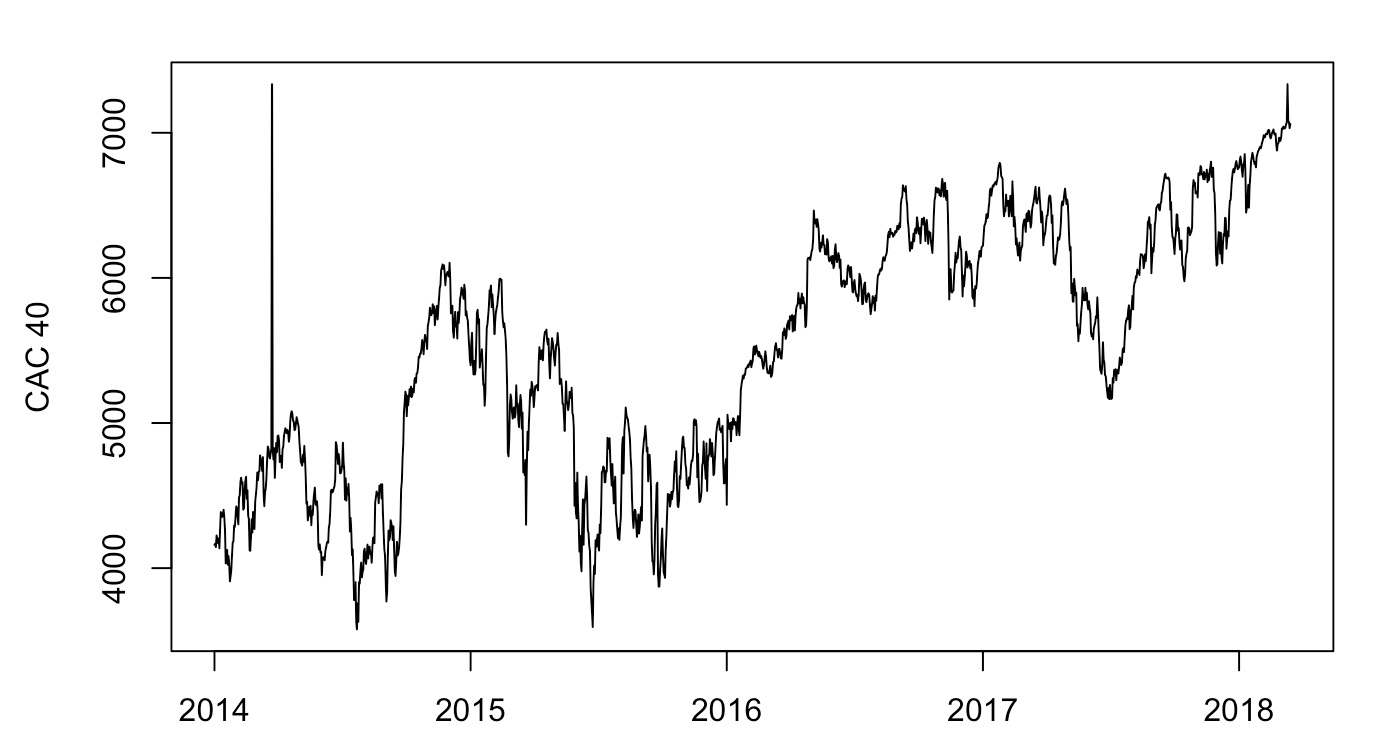}}\centerline{\includegraphics[height=3cm,width=10cm]{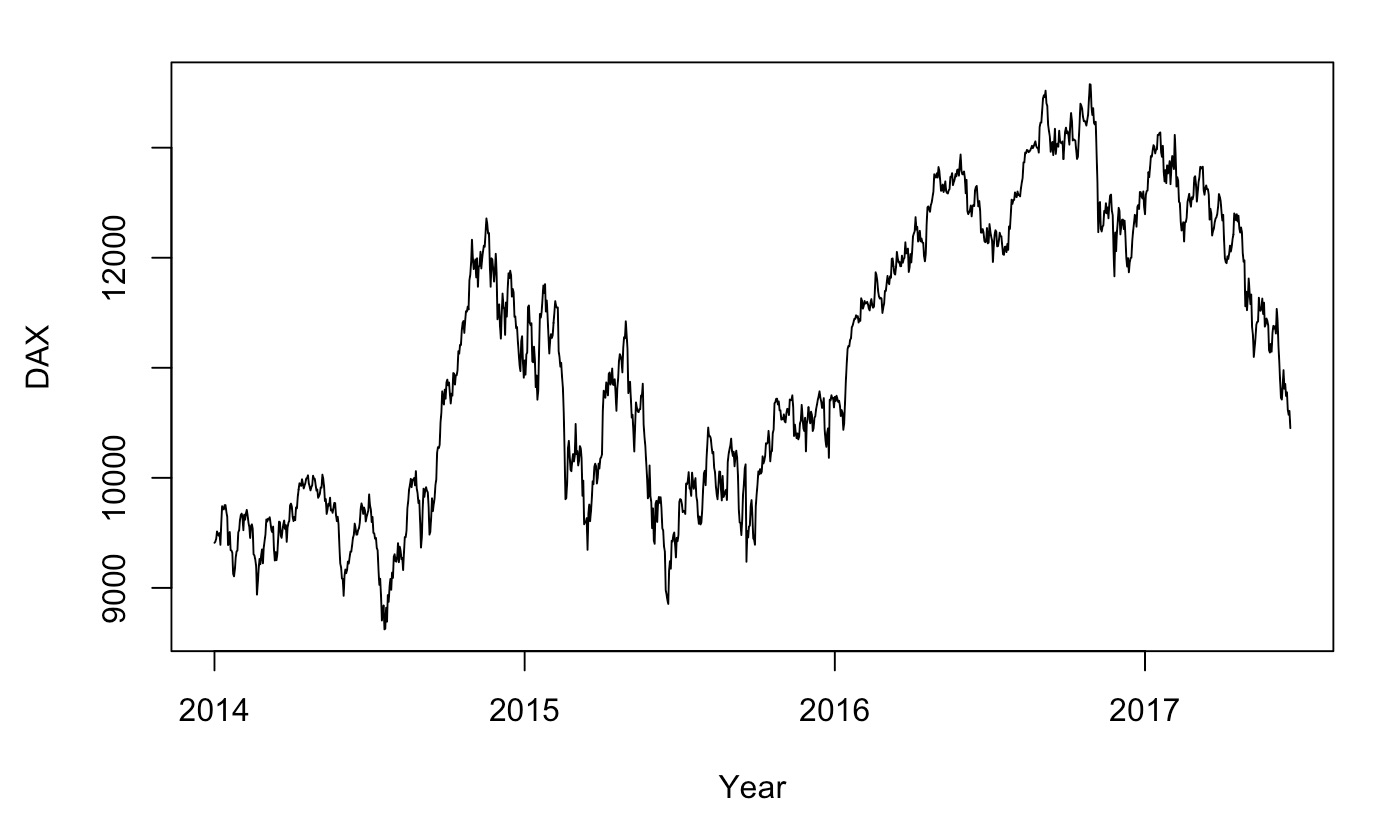}}
\end{minipage}\hfill \\
\begin{minipage}[b]{0.4\linewidth}
\centerline{\includegraphics[width=6.5cm]{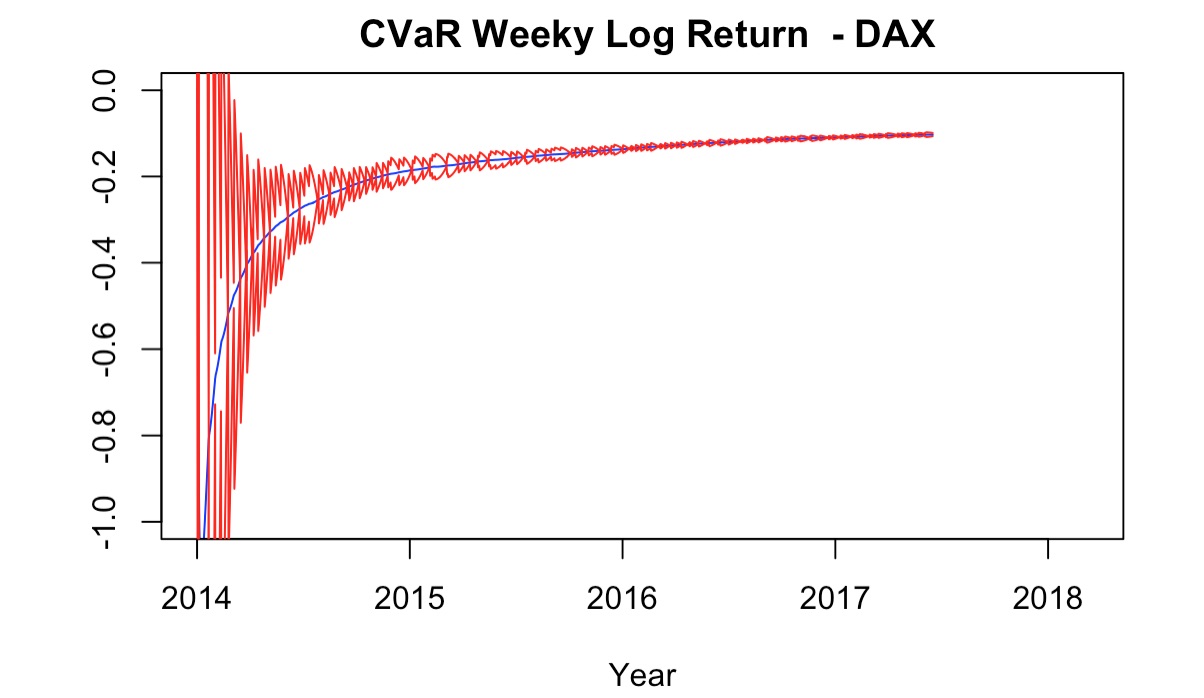}}
\end{minipage}
\begin{minipage}[b]{0.4\linewidth}
\centerline{\includegraphics[width=6.5cm]{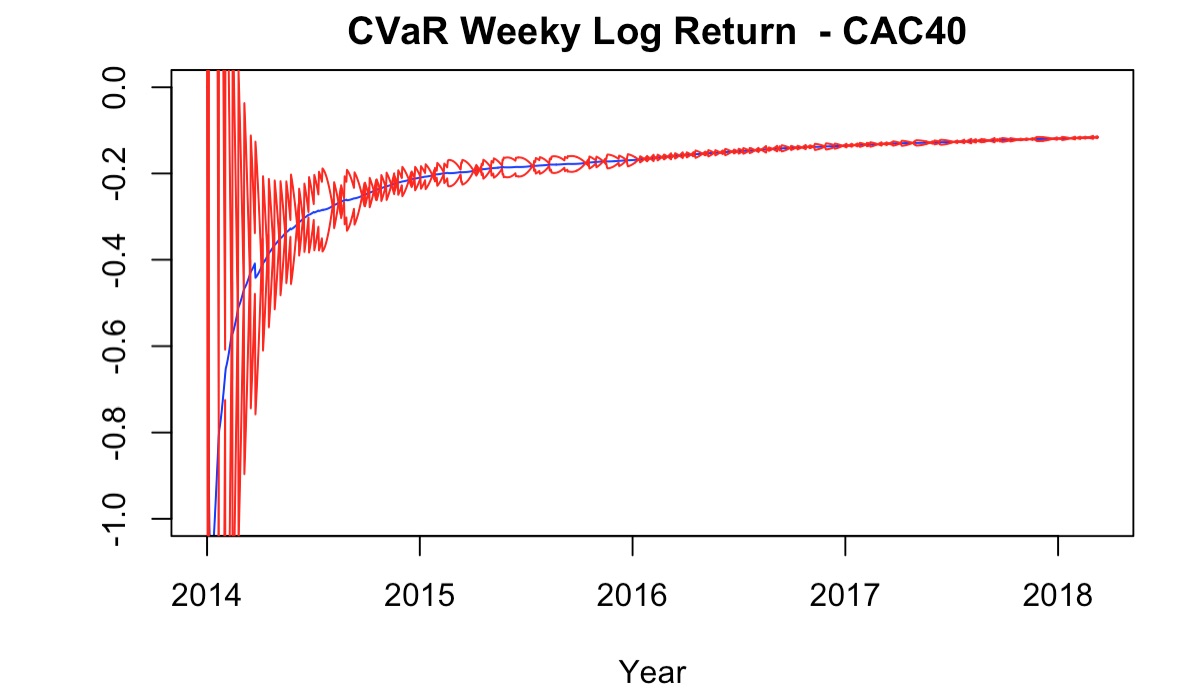}}
\end{minipage}\\
\vspace{-1em}
\caption{Convexified algorithm on Yahoo! Finance datasets.\label{fig:finance}}
\end{figure}

%\bibliographystyle{abbrv}
%\bibliography{BCG-SUPERQ-2020}  
 
\providecommand{\AC}{A.-C}\providecommand{\CA}{C.-A}\providecommand{\CH}{C.-H}\providecommand{\CJ}{C.-J}\providecommand{\JC}{J.-C}\providecommand{\JP}{J.-P}\providecommand{\JB}{J.-B}\providecommand{\JF}{J.-F}\providecommand{\JJ}{J.-J}\providecommand{\JM}{J.-M}\providecommand{\KW}{K.-W}\providecommand{\PL}{P.-L}\providecommand{\RE}{R.-E}\providecommand{\SJ}{S.-J}\providecommand{\XR}{X.-R}\providecommand{\WX}{W.-X}

\end{document}